\newtheorem{theorem}{\bf Theorem}[section]
\newtheorem{lemma}[theorem]{\bf Lemma}
\newtheorem{definition}[theorem]{\bf Definition}
\newtheorem{corollary}[theorem]{\bf Corollary}
\newtheorem{remark}[theorem]{\bf Remark}
\newtheorem{example}[theorem]{\bf Example}
\newcommand{\rme}{\mathrm{e}}
\newcommand{\rmi}{\mathrm{i}}
\newcommand{\sign}{\operatorname{sign}}
\newcommand{\defeq}{\mathrel{\mathop:}=}
\begin{document}

%%%% Article title to be placed here
\title{Closures of T-homogeneous braids are real algebraic}

\author{%%%% Author details
Benjamin Bode}
\date{}

\address{Instituto de Ciencias Matemáticas, CSIC, 28049 Madrid, Spain}
\email{benjamin.bode@icmat.es}
%%%%%%%%% Insert author address here
%\address{Osaka
%ben.bode.2013@my.bristol.ac.uk
%}

%%%% Subject entries to be placed here %%%%
%\subject{Mathematical Physics, Topology}

%%%% Keyword entries to be placed here %%%%
%\keywords{braid group action, }
%%% good choices?

%%%% Insert corresponding author and its email address}
%\corres{Benjamin Bode, Mark Dennis\\
%\email{benjamin.bode@bristol.ac.uk}, \email{mark.dennis@bristol.ac.uk}}
%BB: wrote Benjamin instead of Ben
%%% mrd: final final version, from now on all edits require comment

%%%% Abstract text to be placed here %%%%%%%%%%%%

%%%%%%%%%%%%%%%%%%%%%%%%%%%
%
%%%%%%%%%% Insert the texts which can accomdate on firstpage in the tag "fmtext" %%%%%

\maketitle
\begin{abstract}
A link in $S^3$ is called real algebraic if it is the link of an isolated singularity of a polynomial map from $\mathbb{R}^4$ to $\mathbb{R}^2$. It is known that every real algebraic link is fibered and it is conjectured that the converse is also true. We prove this conjecture for a large family of fibered links, which includes closures of T-homogeneous (and therefore also homogeneous) braids and braids that can be written as a product of the dual Garside element and a positive word in the Birman-Ko-Lee presentation. The proof offers a construction of the corresponding real polynomial maps, which can be written as semiholomorphic functions. We obtain information about their polynomial degrees. 
\end{abstract}

\section{Introduction}\label{sec:intro}
It is well known that the set of algebraic links, the links that arise as the links of isolated singularities of complex polynomials, consists of certain iterated cables of torus links \cite{brauner}. In the last chapter of his seminal work \cite{milnor}, Milnor discusses the real analogue of this classification problem.
Let $f:\mathbb{R}^4\to\mathbb{R}^2$ be a polynomial map with $f(0)=0$, whose Jacobian $Df$ vanishes at the origin, but has full rank at all other points of some neighbourhood of the origin. In this case, we say that the origin is an \textit{isolated singularity} of $f$. Exactly as in the complex case, the intersection $f^{-1}(0)\cap S_{\rho}^3$ of the vanishing set of $f$ and a 3-sphere of sufficiently small radius $\rho>0$ is a link, whose link type does not depend on $\rho$. We call this link the \textit{link of the singularity} and denote it by $L_f$.

Analogous to the terminology from the complex setting we say that a link $L$ is \textit{real algebraic} if $L=L_f$ for some polynomial $f$. In contrast to the set of algebraic links, the set of real algebraic links has not been classified yet. Note that a generic real polynomial in these dimensions does not have an isolated singularity. The difficulty of constructing polynomials with isolated singularities and a given link type has already been noted by Milnor \cite{milnor}.

While there are many differences between the complex and the real setting, the importance of (Milnor) fibrations can be found in both. In particular, Milnor showed that every real algebraic link is fibered \cite{milnor}. It is an open conjecture by Benedetti and Shiota that the set of real algebraic links and the set of fibered links are identical \cite{benedetti}. The main result of this paper is a construction of certain polynomials with isolated singularities, which allows us to prove that a large family of fibered links is indeed real algebraic.

All links in this family are closures of so-called \textit{P-fibered braids} or, equivalently, bindings of \textit{braided open book decompositions}, which can be described in terms of certain diagrams called \textit{Rampichini diagrams} \cite{bode:braided, mortramp, rampi}. We introduce an operation on Rampichini diagrams detailed in the later sections. We say that one P-fibered braid $B_2$ is obtained from another $B_1$ by the insertion of \textit{inner loops} if their Rampichini diagrams are related by such operations. The family of fibered links for which we can prove real algebraicity can be characterized in terms of properties of their Rampichini diagrams.

\begin{theorem}\label{thm:main}
Let $B_1$ be a P-fibered braid on $n$ strands with an odd, pure Rampichini diagram. Let $B_2$ be a P-fibered braid that is obtained from $B_1$ by the insertion of inner loops. Then the closure of $B_2$ is real algebraic.\\
Furthermore, the corresponding real polynomial map with an isolated singularity can be taken to be semiholomorphic (i.e., it can be written as a polynomial $f:\mathbb{C}^2\to\mathbb{C}$ in complex variables $u$, $v$ and the complex conjugate $\overline{v}$) and of degree $n$ with respect to the complex variable $u$.
\end{theorem}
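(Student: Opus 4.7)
The plan is to construct, for each $B_2$ in the described family, an explicit semiholomorphic polynomial $f(u,v,\overline{v})$ such that $L_f$ is the closure of $B_2$. The starting point is to view a P-fibered braid $B$ on $n$ strands as a smooth loop $s \mapsto \{q_1(s),\dots,q_n(s)\}$ in the configuration space of $n$ unordered points in $\mathbb{C}$, parameterised by $s \in S^1$. The product
\[
g_s(u) \defeq \prod_{j=1}^n (u-q_j(s))
\]
is a loop of monic degree-$n$ polynomials in $u$ whose simultaneous root locus is a geometric representative of $B$ inside $\mathbb{C}\times S^1$. I then want to upgrade this loop to a polynomial $f:\mathbb{C}^2\to\mathbb{C}$ by substituting $s=v/|v|$ and multiplying by an appropriate power of $|v|$, so that the $\overline{v}$-dependence enters only through polynomial monomials; this is the standard radial trick that turns an $S^1$-family of polynomials in $u$ into a semiholomorphic polynomial in $u,v,\overline{v}$.

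\textbf{Base case $B_1$.} For this radial extension to actually be a polynomial, the Fourier expansion of each coefficient $a_k(s)$ of $u^k$ in $g_s(u)$ must contain only frequencies of a prescribed parity, so that $|v|^N a_k(v/|v|)$ becomes polynomial in $v,\overline{v}$ for some controlled $N$. This is precisely where the \emph{odd} and \emph{pure} hypotheses on the Rampichini diagram of $B_1$ should enter: they are tailored to guarantee a parameterisation of $B_1$ whose Fourier coefficients involve only odd frequencies, so that the radial extension produces a genuine polynomial in $u,v,\overline{v}$ of degree $n$ in $u$. I would then verify that the resulting $f$ has an isolated singularity at the origin by checking, on small spheres $S^3_\rho$, that the vanishing of $f$ together with the vanishing of its partial derivatives forces the base point of the sphere; the key input is that $g_s$ has simple roots for every $s$, which is automatic since $B$ is a braid.

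\textbf{Inner-loop insertion.} To pass from $B_1$ to $B_2$, the Rampichini-level operation translates into a local modification of the loop $s\mapsto g_s(u)$: over a small $s$-interval one inserts an extra excursion of the roots that realises the additional braiding without creating new strands. On the polynomial side this should correspond to a compatible local modification of the coefficients $a_k$, still expressible with only odd Fourier frequencies, and hence again produces a valid semiholomorphic polynomial after the radial extension. Because inner-loop insertion does not change the number of strands, the degree in $u$ remains $n$, as claimed.

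\textbf{Main obstacles.} I expect the hardest step to be the precise translation between the combinatorial features of the Rampichini diagram (oddness, purity, and the data specifying an inner loop) and the analytic properties of the coefficients $a_k(s)$: one must ensure both that the relevant Fourier expansions have the correct parity and that the induced family $g_s$ retains only simple roots for all $s$ throughout the insertion process. The isolated-singularity condition is a second delicate point, since semiholomorphic $f$ can acquire unwanted critical points through its $\overline{v}$-dependence, and a careful nondegeneracy argument in a neighbourhood of $L_f$ inside the small sphere is required. Making the inner-loop insertion compatible with both of these requirements simultaneously — without perturbing the $u$-degree and without destroying the isolated-singularity property — is, I expect, the technical heart of the theorem, and likely dictates the exact definition of the operation on Rampichini diagrams used in the paper.
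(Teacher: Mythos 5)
Your treatment of the base case $B_1$ is essentially the paper's Theorem~\ref{thm:weighted}: for an odd loop $g_t$ on an odd number of strands with finite Fourier coefficients, the radial substitution $f(u,r\rme^{\rmi t})=r^{kn}g(u/r^k,\rme^{\rmi t})$ with $k$ odd clears the square roots and gives a radially weighted homogeneous semiholomorphic polynomial; note, however, that isolation of the singularity there hinges on the P-fibered condition (no argument-critical points of the critical values of $g_t$), not merely on $g_t$ having simple roots as you suggest.

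The genuine gap is in your inner-loop step. You propose to realise $B_2$ by a ``compatible local modification of the coefficients $a_k$, still expressible with only odd Fourier frequencies,'' and then apply the same radial extension. But inserting an inner loop at a single parameter value $t_i$ destroys the odd symmetry $g_{t+\pi}(u)=-g_t(-u)$: to preserve it one would be forced to insert a second inner loop at $t_i+\pi$, which changes the braid away from $B_2$. If $B_2$ itself admitted an odd parametrisation, Theorem~\ref{thm:weighted} would apply directly and the whole inner-loop apparatus would be unnecessary; the point of the theorem is precisely that $B_2$ in general does not. The paper's route is different in an essential way: it first degenerates the relevant critical value to zero at \emph{both} $t_i$ and $t_i+\pi$ (Lemmas~\ref{lem:cons} and \ref{lem:oddversion}), which keeps the loop $h_t$ odd but makes its roots a \emph{singular} braid, so that the radially weighted homogeneous $f$ built from $h_t$ is a degenerate polynomial (its zero set is the cone on a singular braid closure, with non-isolated critical points). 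It then adds a higher radial order term $r^mA(\rme^{\rmi t})$ with $m$ even and $A$ an \emph{even} finite Fourier series obtained by trigonometric interpolation (Lemma~\ref{lem:intA}); this perturbation resolves the two symmetric degenerations asymmetrically --- one into an inserted inner loop, the other into nothing --- which is how a non-symmetric $B_2$ emerges from an odd starting datum while $p=f+r^mA$ remains a polynomial in $u,v,\overline v$. Proving that $p$ has an isolated singularity then requires a delicate lowest-order analysis of the Jacobian near the resolved points, not just a check of simple roots. None of this machinery is present, or replaceable by your parity argument, in the proposal.
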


The properties of being odd and pure will be defined at a later point (Definition~\ref{def:odd} and Definition~\ref{def:comp}). It is not straightforward to decide if a given link satisfies the condition from Theorem~\ref{thm:main}. However, for large families of fibered links we prove that this is the case.

\begin{theorem}\label{thm:Thomo}
Let $B$ be a T-homogeneous braid. Then the closure of $B$ is real algebraic and the corresponding polynomial can be taken to be semiholomorphic.
\end{theorem}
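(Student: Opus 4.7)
The plan is to reduce Theorem~\ref{thm:Thomo} to Theorem~\ref{thm:main}. That is, given any T-homogeneous braid $B$, I would exhibit it as $B_2$ in the notation of Theorem~\ref{thm:main} by constructing a P-fibered braid $B_1$ with an odd, pure Rampichini diagram from which $B_2=B$ is obtained by a sequence of inner-loop insertions. Granted this, Theorem~\ref{thm:main} immediately yields both the real algebraicity of the closure of $B$ and the semiholomorphic form of the defining polynomial, including the degree bound in the complex variable.

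The first step is to isolate a suitable \emph{skeleton} braid $B_1$ from the T-homogeneous data of $B$. A T-homogeneous braid is built from generators grouped into bands, with all generators within a given band carrying a common sign. A natural candidate for $B_1$ is the braid consisting of one canonical representative of each band appearing in the T-homogeneous expression of $B$, assembled in the order dictated by the T-homogeneous structure; in the special case of homogeneous braids this amounts to choosing one copy of each generator that occurs, and in the Birman--Ko--Lee case it amounts to the dual Garside element. I would then verify directly from the combinatorics of this skeleton that $B_1$ is P-fibered and that its Rampichini diagram is both odd and pure. The odd condition amounts to a parity statement about the combinatorial data of the diagram, while purity concerns the distinguishability of strands; for a sufficiently simple skeleton both properties should follow from direct inspection.

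The second step is inductive: I would show that each additional occurrence of a T-homogeneous generator needed to pass from $B_1$ to $B$ corresponds to the insertion of a single inner loop in the Rampichini diagram of the current partial product. Because T-homogeneity forbids cancellations between oppositely-signed generators within a common band, each such insertion is unambiguously defined and preserves the P-fibered property. Iterating across all extra generators of $B$ beyond the skeleton produces $B$ itself, placing us in the hypotheses of Theorem~\ref{thm:main}.

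The main obstacle, I expect, is the precise dictionary between the braid-level operation of appending a T-homogeneous generator and the diagram-level operation of inserting an inner loop. One must verify that the latter is indeed the combinatorial shadow of the former, and that after each insertion the resulting diagram still encodes a valid P-fibered braid realizing the expected word. A related subtlety is ensuring that the initial skeleton $B_1$ can always be chosen so that its Rampichini diagram is simultaneously odd and pure; in cases where the T-homogeneous structure of $B$ involves nested or parallel bands, it may be necessary to enlarge $B_1$ slightly beyond the bare minimum to control the parity, and verifying that this remains compatible with the inner-loop construction is where the bulk of the technical work is likely to reside.
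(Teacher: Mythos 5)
Your overall strategy (find a minimal skeleton $B_1$ inside the T-homogeneous word, then recover $B$ by inserting inner loops) is the same as the paper's: the paper takes a length-$(n-1)$ subword containing each $a_j^{\varepsilon_j}$ exactly once, observes that its Rampichini diagram is simple and pure, and notes that by the definition of T-homogeneity the full diagram of $B$ is obtained from it by inner-loop insertions. The second half of your argument is therefore fine in outline.

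The genuine gap is in the first half: you reduce to Theorem~\ref{thm:main}, which requires the skeleton's Rampichini diagram to be \emph{odd}, and you assert that oddness is a ``parity statement'' that should follow from inspection for a simple enough skeleton, possibly after enlarging $B_1$. This does not work. Oddness (Definition~\ref{def:odd}) means the corresponding loop of polynomials satisfies $g_{t+\pi}(u)=-g_t(-u)$; in particular the number of strands must be odd and the whole diagram must carry a $\pi$-shift symmetry in $t$ combined with the index shift by $n'-1$. A generic skeleton of a T-homogeneous braid on $n$ strands has no such symmetry, and no enlargement within the same strand count will produce one for, say, even $n$. The paper's resolution is Theorem~\ref{thm:main2}, which replaces ``odd'' by ``simple'' in the hypothesis on $B_1$. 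Its proof passes to the doubled braid $\imath_n(B_1)m_n(B_1)$ on $2n-1$ strands, which does admit an odd, pure Rampichini diagram by the construction of Section~\ref{sec:odd}, inserts the inner loops only in the lower-left quadrant to obtain $\imath_n(B_2)m_n(B_1)$, and then uses the fact that a simple diagram closes to an unknot, so the resulting link is the connected sum of the closure of $B_2$ with the unknot, i.e.\ the closure of $B_2$ itself. Without this doubling-plus-connected-sum step (or an equivalent), your reduction to Theorem~\ref{thm:main} cannot be completed.
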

The family of T-homogeneous braids (cf. Definition~\ref{def:thomo}) was introduced by Rudolph \cite{rudolph2} as a generalization of the family of homogeneous braids (whose closures are known to be fibered \cite{stallings2}), so that we immediately obtain the following corollary.
%, which proves a conjecture by Karadereli and Öztürk \cite{ozturk}.   
\begin{corollary}\label{cor:homo}
Let $B$ be a homogeneous braid. Then the closure of $B$ is real algebraic and the corresponding polynomial can be taken to be semiholomorphic.
\end{corollary}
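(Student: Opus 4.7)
The plan is to deduce Corollary~\ref{cor:homo} directly from Theorem~\ref{thm:Thomo} by checking that every homogeneous braid is in fact T-homogeneous. Since Theorem~\ref{thm:Thomo} already supplies a semiholomorphic polynomial with an isolated singularity whose link is the closure of an arbitrary T-homogeneous braid, no further analytic or geometric input is needed. The only thing to verify is a set-theoretic containment of braid classes, and this is essentially what motivated Rudolph's original definition in \cite{rudolph2}.

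More concretely, I would proceed as follows. Recall that a braid word $w$ in the Artin generators $\sigma_1,\ldots,\sigma_{n-1}$ is homogeneous in the sense of Stallings \cite{stallings2} if each generator that occurs in $w$ occurs only with exponents of a single sign. Once Definition~\ref{def:thomo} has been unpacked, one checks that taking the distinguished family $T$ to be the set of Artin generators themselves (or their inverses, depending on signs) exhibits such a word $w$ as a T-homogeneous word. Thus any braid $B$ admitting a homogeneous presentation admits a T-homogeneous presentation, and Theorem~\ref{thm:Thomo} applies to yield a polynomial $f:\mathbb{C}^2\to\mathbb{C}$, polynomial in $u$, $v$ and $\overline{v}$, with an isolated singularity at $0$ whose link is the closure of $B$.

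The main obstacle is conceptually trivial but formally requires access to Definition~\ref{def:thomo}: one must confirm that the conditions imposed on a T-homogeneous word reduce to Stallings's sign condition when $T$ consists of the Artin generators. This is exactly the content of Rudolph's remark that the T-homogeneous family generalizes the homogeneous family, so the verification is a matter of transcribing definitions rather than producing new ideas. Once this inclusion is in hand, the corollary is immediate.
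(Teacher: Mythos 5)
Your proposal is correct and is essentially the paper's own argument: the paper deduces the corollary from Theorem~\ref{thm:Thomo} by observing that homogeneous braids are T-homogeneous, realized explicitly by the cactus $\tau_j=(1\to n-j+1)$ (equivalently, a path graph for $T$), which gives $a_j^{\varepsilon_j}=\sigma_{n-j}^{\varepsilon_j}$. The only point to be careful about is that Definition~\ref{def:thomo} is \emph{strict} (every $a_j^{\varepsilon_j}$ must occur in the word), so the matching notion of homogeneous requires every Artin generator to appear, not merely that each occurring generator has a consistent sign.
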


In the context of braided open books and Rampichini diagrams it is useful to represent braids in terms of the BKL-generators (Birman, Ko and Lee \cite{BKL}) $a_{i,j}$, also called band generators, instead of Artin generators $\sigma_i$. A word in BKL-generators is called BKL-positive if it does not include inverses of any of the generators $a_{i,j}$. Let  $\delta=a_{n-1,n}a_{n-2,n-1}\ldots a_{1,2}=\sigma_{n-1}\sigma_{n-2}\ldots\sigma_{1}$ denote the Dual Garside element \cite{ian, BKL}.
\begin{theorem}\label{thm:delta}
Let $B=\delta P$, where $P$ is a BKL-positive word. Then the closure of $B$ is real algebraic.
\end{theorem}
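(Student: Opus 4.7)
The plan is to reduce Theorem~\ref{thm:delta} to Theorem~\ref{thm:main} by taking the base braid to be $B_1=\delta$ itself and showing that for every BKL-positive word $P$ the braid $B_2=\delta P$ is obtained from $B_1$ by a sequence of insertions of inner loops.

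\textbf{Step 1.} First I would analyze the Rampichini diagram of the dual Garside element $\delta=a_{n-1,n}a_{n-2,n-1}\cdots a_{1,2}$. Since $\delta$ realizes a single $n$-cycle of the strands by a uniform cyclic shift, its closure is an unknot and it admits a canonical braided open book representative in which the $n$ strands are the orbits of an order-$n$ rotation of the disk about its centre. I expect this makes $\delta$ a P-fibered braid, and that the associated Rampichini diagram consists of the minimal possible data, so that it is odd (Definition~\ref{def:odd}) and pure (Definition~\ref{def:comp}) essentially by inspection. This verification is a finite combinatorial check once $\delta$ is put in its canonical rotationally symmetric form.

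\textbf{Step 2.} Next I would show that right multiplication by a single BKL-generator $a_{i,j}$ corresponds, at the level of the braided open book and the Rampichini diagram, to a single insertion of an inner loop. This is the geometric heart of the argument: the band generator $a_{i,j}$ is a half-twist along a chord joining the punctures $i$ and $j$, and in a P-fibered representative one can realize this half-twist by a local modification of the fibration in a neighbourhood of that chord. The modification should match precisely the combinatorial definition of inner loop insertion on the Rampichini diagram. Iterating over the letters of $P$ then shows that $\delta P$ is itself P-fibered and is obtained from $\delta$ by a sequence of inner loop insertions, after which Theorem~\ref{thm:main} applies directly to the pair $(\delta,\delta P)$ and yields that the closure of $\delta P$ is real algebraic.

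The main obstacle I anticipate is Step 2: matching the algebraic operation of right multiplication by a BKL-generator with the combinatorial/geometric operation of inner loop insertion requires a careful local analysis of how the braided fibration deforms under the additional half-twist along a chord. The choice of $\delta$ as the base braid is exactly what makes this step feasible, because its rotationally symmetric model admits, for every pair $(i,j)$, a chord along which the required local modification can be performed without disturbing the rest of the braided open book; a less symmetric base braid would only interact well with a subset of the BKL-generators and would not allow arbitrary BKL-positive words to be absorbed uniformly.
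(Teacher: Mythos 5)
Your overall strategy --- start from a Rampichini diagram of $\delta$ and realize the letters of $P$ as inserted inner loops --- is the same as the paper's, but both of your steps have genuine gaps as stated.

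\textbf{Step 1.} The claim that the Rampichini diagram of $\delta$ is \emph{odd} ``by inspection'' is unjustified and is not what the paper uses. Oddness (Definition~\ref{def:odd}) requires a representing loop with $g_{t+\pi}(u)=-g_t(-u)$, and in this paper oddness is essentially only ever achieved by doubling, i.e.\ by passing to $\imath_n(B)m_n(B)$ on $2n-1$ strands (Section~\ref{sec:odd}); there is no reason the standard $n$-strand diagram of $\delta$ has this symmetry. What $\delta$ does have is a \emph{simple, pure} Rampichini diagram, and the correct tool is Theorem~\ref{thm:main2} (the ``simple'' variant of Theorem~\ref{thm:main}), which works precisely because the closure of $\delta$ is the unknot, so the doubling $\imath_n(B_2)m_n(B_1)$ only connect-sums with an unknot. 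Your reduction to Theorem~\ref{thm:main} as literally stated does not go through.

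\textbf{Step 2.} The identification ``right multiplication by an arbitrary BKL-generator $=$ one inner loop insertion'' is false for a fixed Rampichini diagram. An inner loop inserted at height $t$ does not let you choose an arbitrary chord/half-twist: the letter it contributes is forced to be the label already carried by the chosen moving critical value at that height (with sign equal to the sign of that critical value), and to append the letter at the end of the band word the moving critical value must moreover sit in the correct $\varphi$-position. For the standard diagram of $\delta$ the cactus at any given height consists of only $n-1$ specific transpositions in a specific order, so an arbitrary BKL-positive word $P$ cannot be absorbed this way. This is exactly the obstruction the paper resolves with Lemma~\ref{lem:BKLseq}: one splices into the diagram of $\delta$, for each letter $a_{i_k,j_k}$ of $P$, an auxiliary square diagram $R'(a_{i_k,j_k})$ that performs a closed cycle of BKL-moves on the word $a_{1,n}a_{1,n-1}\cdots a_{1,2}$ so that at some height $t_k$ the label $\tau_{n-1}(t_k)$ equals $a_{i_k,j_k}$; only then does inserting the inner loop at $t_k$ produce the desired letter. (The result is a diagram for $P\delta$, which has the same closure as $\delta P$ by conjugacy.) Without this rearrangement device, your Step 2 does not work, and appealing to the ``rotational symmetry'' of $\delta$ does not supply it.
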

This family of links was shown to be fibered in \cite{ian}. In the BKL-presentation every braid on $n$ strands can be written as $\delta^k P$ for some $k\in\mathbb{Z}$, where $P=A_1A_2\ldots A_m$ is BKL-positive. This representation becomes unique for every given braid once appropriate conditions on the factors $A_i$ are imposed \cite{BKL}. We then call $\delta^k P$ the Dual Garside Normal Form of the braid. Since $\delta^{k-1}P$ is BKL-positive if $k\geq1$, Theorem~\ref{thm:delta} immediately implies the following corollary.
\begin{corollary}\label{thm:garside}
Let $B$ be a braid whose Dual Garside Normal Form contains a positive power of the Dual Garside element $\delta$. Then the closure of $B$ is real algebraic and the corresponding polynomial can be taken to be semiholomorphic. 
\end{corollary}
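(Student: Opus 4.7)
The plan is to deduce the corollary directly from Theorem~\ref{thm:delta} by a short word-rewriting argument. Suppose $B$ has Dual Garside Normal Form $\delta^{k}P$, where $P=A_{1}A_{2}\cdots A_{m}$ is a BKL-positive word and, by hypothesis, $k\geq 1$. The aim is to display $B$ in the form $\delta\cdot Q$ for some BKL-positive word $Q$ so that Theorem~\ref{thm:delta} applies verbatim.

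Concretely, I would take $Q\defeq\delta^{k-1}P$ and check that $Q$ is BKL-positive. The key point is that the Dual Garside element itself, $\delta=a_{n-1,n}a_{n-2,n-1}\cdots a_{1,2}$, is a product of BKL-generators with no inverses appearing, so it is BKL-positive by definition. Consequently, any nonnegative power $\delta^{k-1}$ is a product of such generators, and concatenating the BKL-positive word $P$ on the right preserves BKL-positivity. Hence $B=\delta Q$ with $Q$ BKL-positive, and Theorem~\ref{thm:delta} yields that the closure of $B$ is real algebraic. For the semiholomorphic claim, I would simply invoke the fact that Theorem~\ref{thm:delta} is itself proved by specializing Theorem~\ref{thm:main}, whose output polynomial is by construction of the form $f(u,v,\overline{v})$; so the same polynomial that realizes $B=\delta Q$ also witnesses the semiholomorphic conclusion for $B$.

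The main obstacle in the corollary itself is essentially nonexistent — the reduction is purely combinatorial, and the excerpt already flags it as immediate. All of the real work is packed into Theorem~\ref{thm:delta}, where one must exhibit a P-fibered braid representing $\delta P$ whose Rampichini diagram is obtained by inserting inner loops into an odd, pure Rampichini diagram so that Theorem~\ref{thm:main} can be applied. Thus, assuming Theorem~\ref{thm:delta}, the only thing that could conceivably go wrong here is a formal issue about uniqueness or admissibility of the factors $A_{i}$ in the Normal Form; but since Theorem~\ref{thm:delta} is stated for \emph{any} BKL-positive $P$, no such restriction is needed, and the corollary follows without further work.
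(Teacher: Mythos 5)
Your proposal is correct and matches the paper's argument exactly: the paper also observes that $\delta^{k-1}P$ is BKL-positive when $k\geq 1$, writes $B=\delta\cdot(\delta^{k-1}P)$, and invokes Theorem~\ref{thm:delta}, with the semiholomorphic property coming from the construction underlying that theorem.
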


%The family of P-fibered braids has been introduced in ??, but has already appeared (without a name) in the context of real algebraic links in ??. It includes the set of homogeneous braids, T-homogeneous braids, generalised exchangeable braids and all braids on 3 strands, whose closure is fibered.

The remainder of the paper is structured as follows. Section~\ref{sec:back} reviews the relevant concepts in the study of P-fibered braids and Rampichini diagrams. We also introduce the definitions of inner loops and pure Rampichini diagrams, which feature in Theorem~\ref{thm:main}. Section~\ref{sec:odd} introduces odd Rampichini diagrams and proves a result about corresponding (singular) P-fibered braids with a certain ``odd'' symmetry. The proof of Theorem~\ref{thm:main} then follows a similar strategy as the proof of the main result in \cite{bode:AK}, which constructs weakly isolated singularities (as opposed to isolated singularities) for any link type. In comparison to the work in \cite{bode:AK} the individual steps require much more care as the fibration properties and isolation of the singularity are very strong conditions. We first use trigonometric interpolation and approximation to find a loop in the space of monic, complex polynomials $h_t$ of fixed degree, whose coefficients are finite Fourier series and whose roots form a singular braid $B_{sing}$ that is related to the braids $B_1$ and $B_2$ and that has to satisfy the ``odd'' symmetry requirement (cf. Section~\ref{sec:trig}). From this we construct a radially weighed homogeneous polynomial that is degenerate in the sense of \cite{oka} or \cite{bodeeder} and whose zeros form the cone over the singular braid $B_{sing}$. In Section~\ref{sec:deform} we show that trigonometric interpolation techniques can also be used to find a deformation of this polynomial that has an isolated singularity and whose link is obtained from the singular braid by an appropriate resolution of its singular crossings. In Section~\ref{sec:proof} the resulting link is shown to be the closure of the desired braid $B_2$, which concludes the proof of Theorem~\ref{thm:main}. Then Theorem~\ref{thm:Thomo} and Theorem~\ref{thm:delta} along with their corollaries are shown in Section~\ref{sec:homo}.
\ \\
\textbf{Acknowledgments:} This work is supported by the European Union’s Horizon 2020 Research and Innovation Programme under the Marie Sklodowska-Curie grant agreement No 101023017.
 %In Section~\ref{sec:trig} we show that using trigonometric interpolation and approximation we can find trigonometric parametrisations of the singular braid $B_{sing}$.

\section{Background}\label{sec:back}

\subsection{Braids}

A geometric braid on $n$ strands is a loop in the configuration space of $n$ distinct unmarked points in the complex plane. Via the fundamental theorem of algebra this space is identified with the space $X_n$ of monic polynomials in one complex variable of degree $n$ and with distinct roots. This way a braid parametrised as $(z_1(t),z_2(t),\ldots,z_n(t))$, where $t$ is going from $0$ to $2\pi$ corresponds to the loop $g_t(u)\defeq\underset{j=1}{\overset{n}{\prod}}(u-z_j(t))$. We call an equivalence class of geometric braids under braid isotopies a braid. Braid isotopies correspond to homotopies of loops (with fixed base point) in $X_n$, so that the braid group $\mathbb{B}_n$ on $n$ strands can be defined to be the fundamental group of $X_n$. It is common to visualise geometric braids as the set of parametric curves $\underset{j=1}{\overset{n}{\cup}}(z_j(t),t)$ in $\mathbb{C}\times[0,2\pi]$.

\begin{definition}
We say that a geometric braid $B$ is \textit{P-fibered} if the corresponding loop of polynomials $g_t$ defines a fibration via $\arg(g):(\mathbb{C}\times S^1)\backslash B\to S^1$, $g(u,\rme^{\rmi t})\defeq g_t(u)$. We say that a braid is P-fibered if it can be represented by a P-fibered geometric braid.
\end{definition}

The condition of being P-fibered has the following geometric interpretation in terms of the critical values $v_j(t)$, $j=1,2,\ldots,n-1$, of $g_t:\mathbb{C}\to\mathbb{C}$. Since the roots of $g_t$ are simple, all of its critical values $v_j(t)$ are non-zero. A geometric braid is P-fibered if and only if for every $j=1,2,\ldots,n-1$, the derivative $\tfrac{\partial \arg(v_j(t))}{\partial t}$ never vanishes. Thus as $t$ increases from 0 to $2\pi$ every critical value twists around $0\in\mathbb{C}$ in a fixed direction, either clockwise $\left(\tfrac{\partial \arg(v_j(t))}{\partial t}<0\right)$ or counterclockwise $\left(\tfrac{\partial \arg(v_j(t))}{\partial t}>0\right)$. This is illustrated in Figure~\ref{fig:solar}.

\begin{figure}
\centering
\labellist
\pinlabel $0$ at 780 840
\pinlabel $v_1(t)$ at 1300 1150
\pinlabel $v_2(t)$ at 100 250
\pinlabel $v_3(t)$ at 1100 150
\pinlabel $v_4(t)$ at 1470 580
\endlabellist
\includegraphics[height=5.5cm]{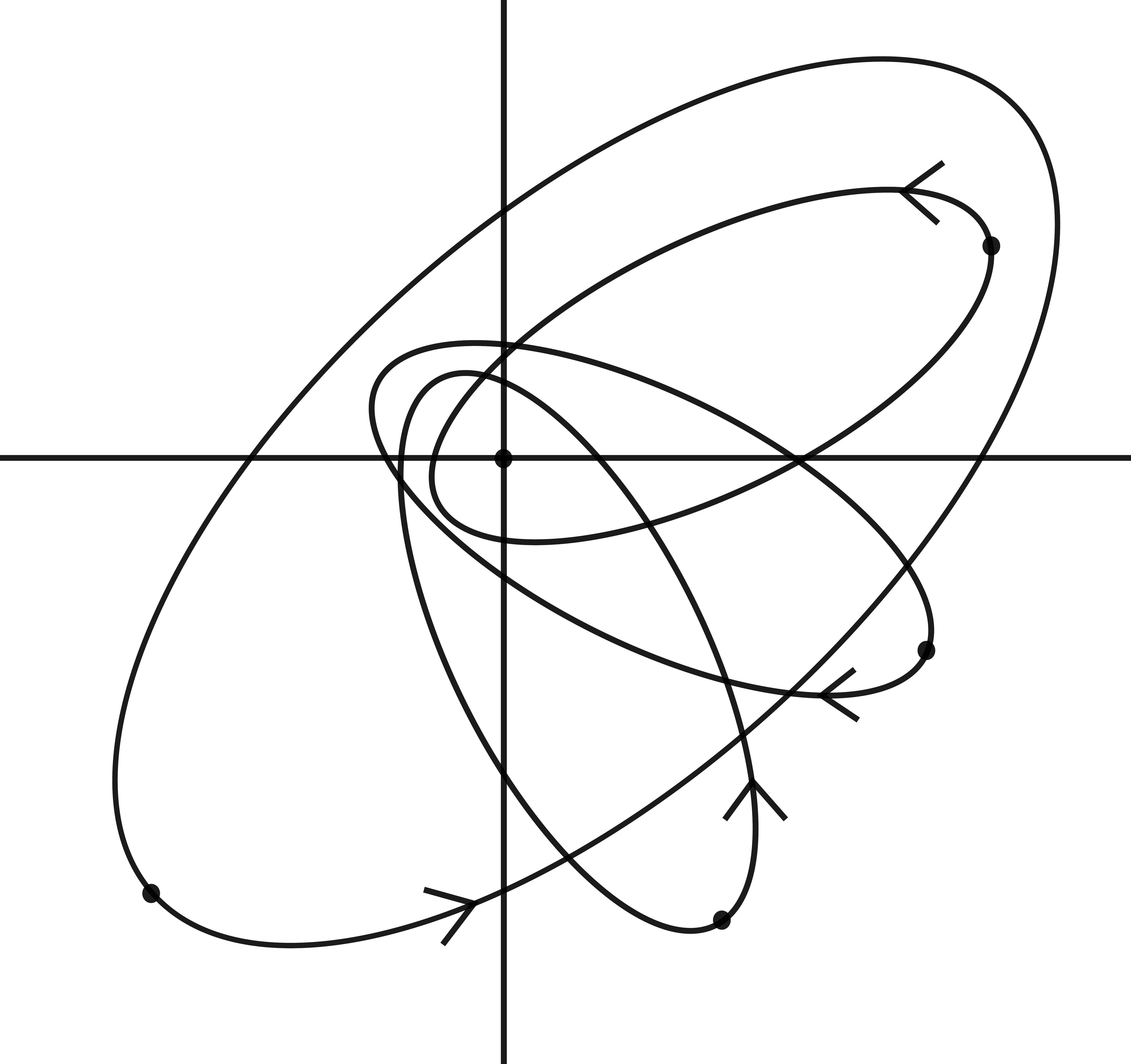}
\caption{The motion in the complex plane of the critical values $v_j(t)$ of the loop of polynomials $g_t$ corresponding to a P-fibered geometric braid. \label{fig:solar}}
\end{figure}

The closure of a P-fibered braid $B$ is a fibered link in the 3-sphere. Via this closing procedure (described in more detail in \cite{bode:braided}) the braided surface $F_{\varphi}=\arg(g_t)^{-1}(\varphi)$ is identified with a fiber surface, whose boundary is the closure of $B$. A fibration of a link complement over a circle that comes from a P-fibered geometric braid and the corresponding loop of polynomials is also called a \textit{braided open book decomposition} of $S^3$ \cite{bode:braided}.

Instead of the more common Artin presentation of the braid group $\mathbb{B}_n$, we will mostly work with the presentation of Birman, Ko and Lee \cite{BKL}, where the generators are $a_{i,j}$, $1\leq i<j\leq n$ and the relations are
\begin{align}
a_{i,j}a_{k,m}&=a_{k,m}a_{i,j}&\text{ if }(i-k)(i-m)(j-k)(j-m)>0,\label{eq:1rel}\\
a_{i,j}a_{j,k}&=a_{i,k}a_{i,j}=a_{j,k}a_{i,k}&\text{ for all }i,j,k\text{ with }1\leq k<j<i\leq n,\label{eq:2rel}
\end{align}
where we set $a_{j,i}=a_{i,j}$ for all $i,j\in\{1,2,\ldots,n\}$. A geometric realisation of a BKL-generator (also called \textit{band generator}) $a_{i,j}$ is depicted in Figure~\ref{fig:BKL}. The better known Artin generator $\sigma_i$ is equal to $a_{i,i+1}$.

\begin{figure}[H]
\centering
\includegraphics[height=4cm]{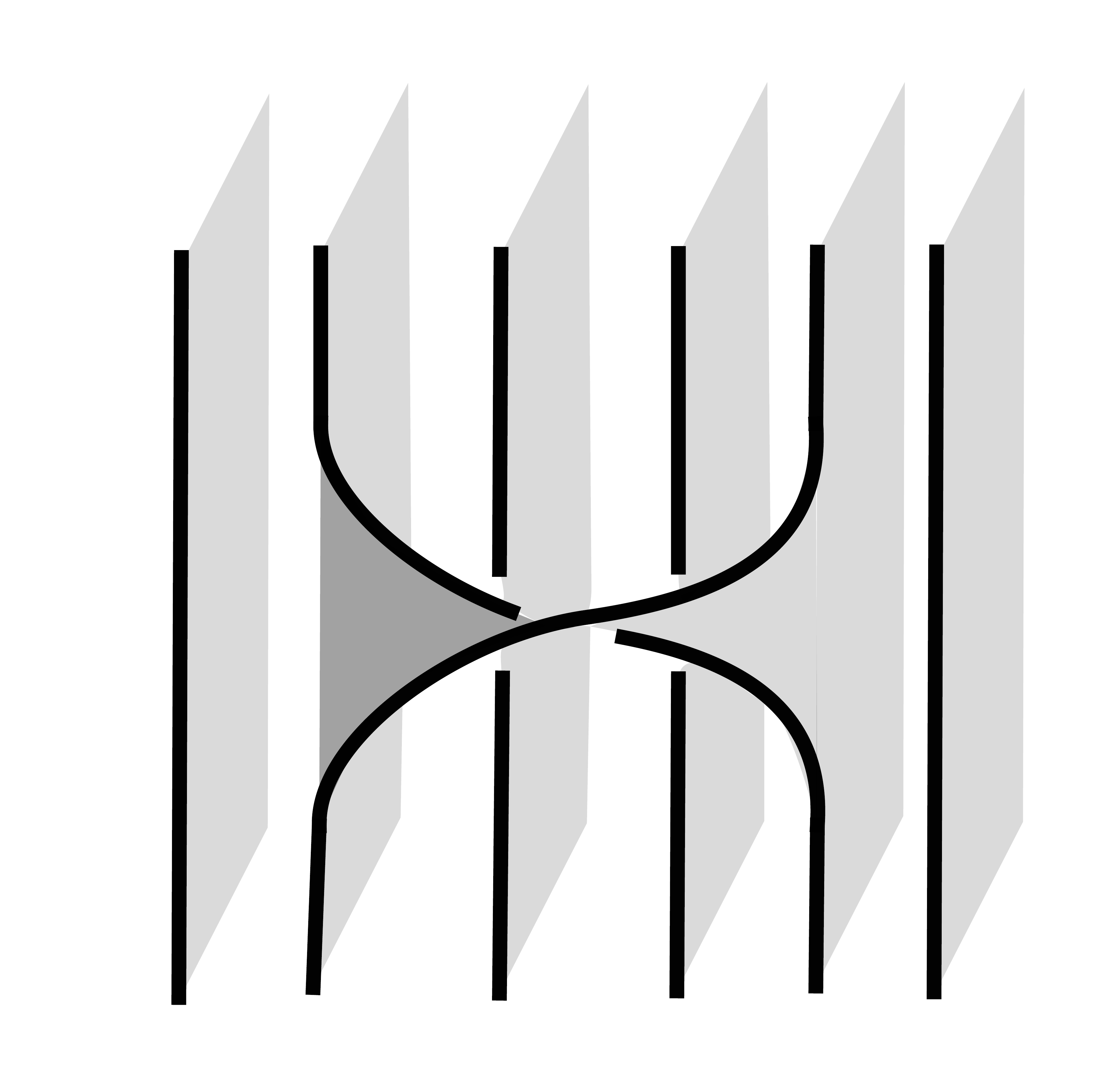}
\caption{The band generator $a_{2,5}$ in $\mathbb{B}_6$. \label{fig:BKL}}
\end{figure}

We can associate in a straightfoward way a \textit{braided surface} (in the sense of Rudolph \cite{rudolphbraid,rudolph2}) or \textit{banded surface} to a given BKL-word. Starting with $n$ parallel disks the braided surface is obtained by inserting a half-twisted band between the $i$th and $j$th disk for each generator $a_{i,j}$ in the given word, where the sign of the twist corresponds to the sign of the generator. Note that the topology of this surface does not change under the BKL-relations Eqs.~\eqref{eq:1rel} and \eqref{eq:2rel}. However, it does change under the trivial group relation $a_{i,j}a_{i,j}^{-1}=a_{i,j}^{-1}a_{i,j}=e$.

%The braid word $A=a_{1,2}a_{2,3}^{-1}a_{1,2}a_{2,3}^{-1}$ on 3 strands for example is a subword of $B=a_{1,2}a_{1,2}a_{2,3}^{-1}a_{2,3}^{-1}a_{1,2}a_{1,2}a_{1,2}a_{2,3}^{-1}a_{2,3}^{-1}$.

In the construction of polynomials described in the later sections we will also encounter singular braids. These differ from usual (geometric) braids in that we allow a finite number of simple intersections (double points, or \textit{singular crossings}) between the strands of a singular braid. There is now a rich theory of such singular braids \cite{baez,birman} and related variants, such as welded braids \cite{fenn}, but we will not really need any of these results apart from the definition itself.

%\subsection{Mixed polynomials}
%Semiholomorphic polynomials are a special type of \textbf{mixed polynomials} as introduced by Oka. In the dimensions that we are interested in, the set of mixed polynomials $f:\mathbb{C}^2\to\mathbb{C}$ consists of polynomials in two complex variables $u$ and $v$, and their complex conjugates, $\overline{u}$ and $\overline{v}$, so that $f$ takes the form
%\begin{equation}
%f(u,v)=\sum_{i,j,k,\ell}c_{i,j,k,\ell}u^i\overline{u}^jv^k\overline{v}^\ell,
%\end{equation}
%with all but finitely many $c_{i,j,k,\ell}\in\mathbb{C}$ equal to to zero. Note that every polynomial map from $\mathbb{R}^4$ to $\mathbb{R}^2$ can be written as a mixed polynomial. A mixed polynomial is semiholomorphic if and only if $c_{i,j,k,\ell}\neq0$ implies $j=0$.

%Oka introduced mixed polynomials as a tool to study singularities of real polynomial maps and their topological properties. 

%Newton boundary, boundary polynomial

%radially weighted homogeneous
%
%non-degenerate, degenerate

\subsection{Rampichini diagrams}\label{subsec:rampi}
In this subsection we review Rampichini diagrams and their connection to P-fibered braids. More details and proofs can be found in \cite{bode:braided,rampi}

\begin{remark}
Throughout this paper the expression $k\text{ mod }n$ refers to the representative of $k$ modulo $n$ in $\{1,2,\ldots,n\}$. 
We use the cycle notation with arrows for permutations, i.e., $(1\to 2)$ denotes the transposition that swaps 1 and 2. In diagrams, where there is little space and no chance of confusion with vectors, we also use $(1,2)$ for $(1\to 2)$.
\end{remark}

Let $g:\mathbb{C}\to\mathbb{C}$ be an element of $X_n$, that is, a monic polynomial of degree $n$ with distinct roots. Since $g$ is monic, it satisfies $\underset{\rho\to\infty}{\lim}\arg(g(\rho\rme^{\rmi \chi}))=n\chi$. Thus it can be interpreted as a simple branched cover from the disk $D$ to itself with $g(\rme^{\rmi \chi})=\rme^{\rmi n\chi}$. The $n$th roots of unity (labeled by 1 through $n$ as in Figure~\ref{fig:foliation}a)) are thus the $n$ preimage points of $\arg(g)=0$ on $\partial D$. They divide $\partial D$ into $n$ arcs $A_i$, where the $A_i$ connects $\rme^{\rmi 2\pi i/n}$ and $\rme^{\rmi 2\pi(i+1\text{ mod }n)/n}$.

\begin{figure}[H]
\labellist
\Large
\pinlabel a) at 100 900
\pinlabel b) at 1200 900
\pinlabel $A_i$ at 1590 940
\pinlabel $A_j$ at 1730 85
\pinlabel $i$ at 1850 940
\pinlabel $j$ at 1450 110
\pinlabel $A_1$ at 220 800
\pinlabel $A_2$ at 230 250
\pinlabel $A_3$ at 930 250
\pinlabel $A_4$ at 930 800
\pinlabel 1 at 540 1000
\pinlabel 2 at 100 510
\pinlabel 3 at 554 50
\pinlabel 4 at 1010 520
\small 
\pinlabel $c_k(t)$ at 1600 550
\endlabellist
\centering
\includegraphics[height=5cm]{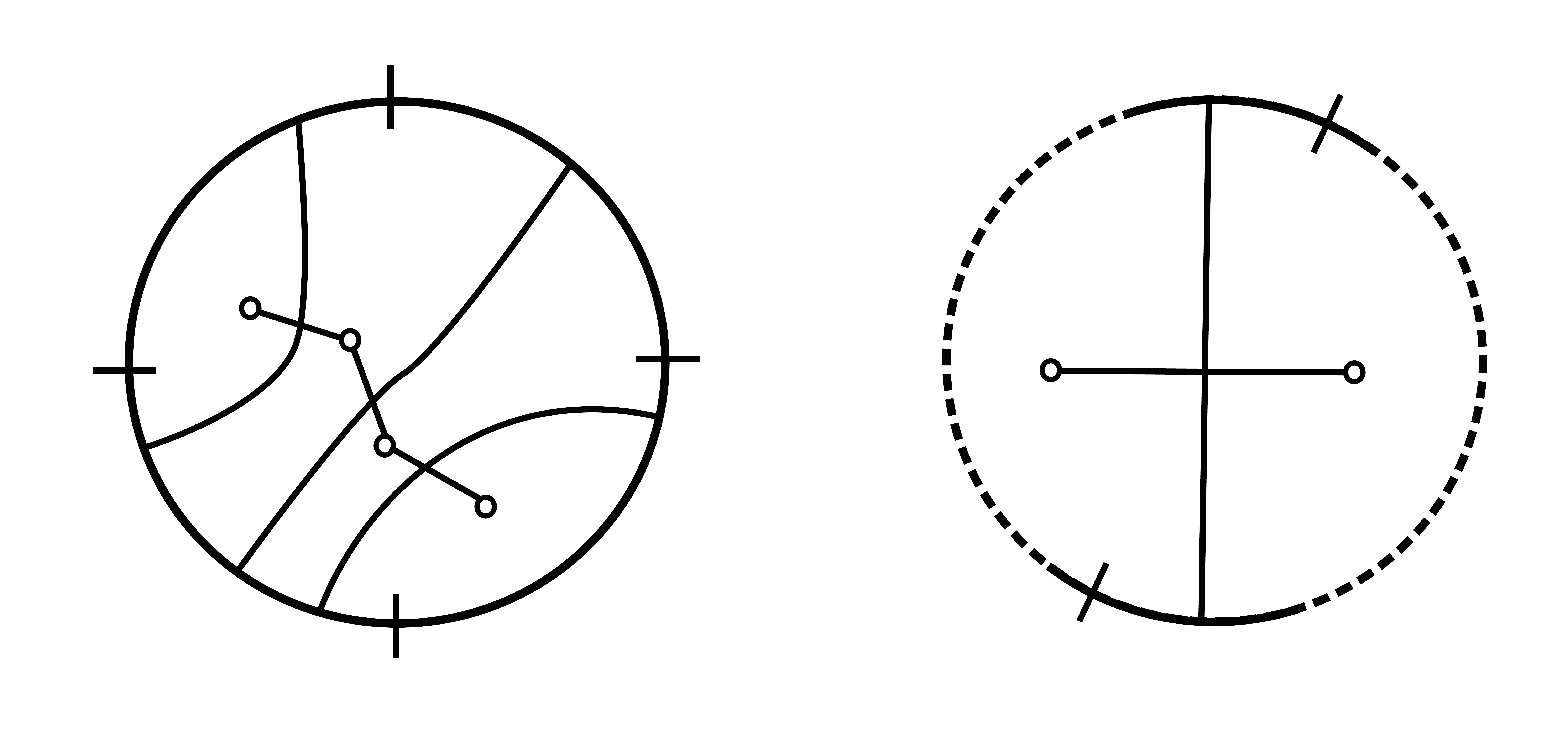}
\caption{a) The singular leaves of the singular foliation on $D_t$ induced by $\arg(g_t)$ and the definition of the segments $A_i$. Small circles are roots of $g_t$. b) The singular leaf containing $c_k(t)$. The transposition associated to $c_k(t)$ and $v_k(t)=g_t(c_k(t))$ is $(i\to j)$. \label{fig:foliation}}
\end{figure}

Furthermore, we assume that the critical values $v_j$, $j=1,2,\ldots,n-1$, of $g$ have distinct arguments and are ordered such that $0<\arg(v_1)<\arg(v_2)<\ldots<\arg(v_{n-1})<2\pi$. We denote the $n-1$ critical points of $g$ by $c_j$, $j=1,2,\ldots,n-1$, indexed such that $g(c_j)=v_j$. The map $\arg(g)$ induces a singular foliation on $D$, with the roots of $g$ as elliptic singular points and the critical points as hyperbolic points (cf. Figure~\ref{fig:foliation}a)).

The leaf of a critical point $c_k$ has the shape of a cross, consisting of one line that connects two roots and one line that connects two points on $\partial D$, say on $A_i$ and $A_j$, intersecting in $c_k$. Then we associate to a critical point $c_k$ the transposition $\tau_k=(i\to j)$ (see Figure~\ref{fig:foliation}b)). Since there is a 1-to-1 correspondence between critical points and critical values we can also think of $\tau_k$ as a transposition associated with the critical value $v_k$. The ordered set of transpositions $\tau_k$, $k=1,2,\ldots,n-1$, is called the \textit{cactus} of $g$ and satisfies $\underset{k=1}{\overset{n-1}{\prod}}\tau_k=(1\to n\to n-1\to \ldots\to3\to2)$.

Consider now a path $g_t$ in $X_n$, with the parameter $t$ going from $0$ to $2\pi$, whose endpoints satisfy the properties of $g$ above. After a small homotopy of the path that does not change its endpoints we may assume that for all but finitely many values of $t\in[0,2\pi]$ the arguments of the critical values $v_j(t)$ are distinct and at the remaining values of $t$ there are only two critical values with the same argument (but different absolute values). For every value of $t$ we order the critical values by their argument in $[0,2\pi)$. Note that this means that the labeling of the critical values changes at the values of $t$ where two critical values have the same argument or where one critical value crosses the line $\arg=0$ (either from below or from above). We have a cactus $\tau_k(t)$, $k=1,2,\ldots,n-1$, for every value of $t$ for which the arguments of the critical values are distinct.

We store this information on the different cacti in a so-called \textit{square diagram} as in Figure~\ref{fig:rampi}a). It consists of a square containing curves that are labeled by transpositions in $S_n$, the symmetric group on $n$ elements. The square represents a cylinder $S^1\times[0,2\pi]$ with coordinates $(\varphi,t)$, both of which go from $0$ to $2\pi$, where $\varphi$ is $2\pi$-periodic. A point $(\varphi,t)$ in the square lies on one of the curves in the diagram if and only if there is a critical value $v_j(t)$ of $g_t$ with $\arg(v_j(t))=\varphi$. Every point on a curve in the diagram thus corresponds to a critical value $v_j(t)$. Note that this means that every horizontal line ($t=const.$) has exactly $n-1$ intersections with the curves in the square, when counted with multiplicities. The label of a point on a curve (away from intersection points) is the transposition $\tau_j(t)$ associated to the corresponding critical value $v_j(t)$. At intersection points of curves in the square there is no well-defined transposition.

\begin{figure}[H]
\labellist
\Large
\pinlabel a) at  50 840
\pinlabel b) at 1000 840
\small
\pinlabel 0 at 150 110
\pinlabel 0 at 210 50
\pinlabel $2\pi$ at 850 50
\pinlabel $2\pi$ at 150 750
\pinlabel $(2,3)$ at 270 210
\pinlabel $(1,4)$ at 270 410
\pinlabel $(1,2)$ at 270 580
\pinlabel $(1,4)$ at 410 790
\pinlabel $(1,3)$ at 530 790
\pinlabel $(1,2)$ at 690 790
\pinlabel $(1,3)$ at 415 610
\pinlabel $(2,3)$ at 440 435
\pinlabel $(1,4)$ at 440 280
\pinlabel $(3,4)$ at 550 620
\pinlabel $(1,2)$ at 495 520
\pinlabel $(1,3)$ at 630 340
\pinlabel $(1,2)$ at 720 180
\pinlabel $(1,4)$ at 900 540
\pinlabel $(3,4)$ at 900 390
\pinlabel $(1,2)$ at 900 210
\pinlabel $(3,4)$ at 610 525
\pinlabel $(1,2)$ at 1820 210
\pinlabel $(3,4)$ at 1820 380
\pinlabel $(1,4)$ at 1820 530
\pinlabel $(1,4)$ at 1300 790
\pinlabel $(1,3)$ at 1425 790
\pinlabel $(1,2)$ at 1590 790 
\pinlabel 0 at 1060 110
\pinlabel 0 at 1110 50
\pinlabel $2\pi$ at 1750 50
\pinlabel $2\pi$ at 1060 750
\Large
\pinlabel $t$ at 100 450
\pinlabel $\varphi$ at 500 20
\pinlabel $t$ at 1030 450
\pinlabel $\varphi$ at 1400 20
\endlabellist
\centering
\includegraphics[height=6cm]{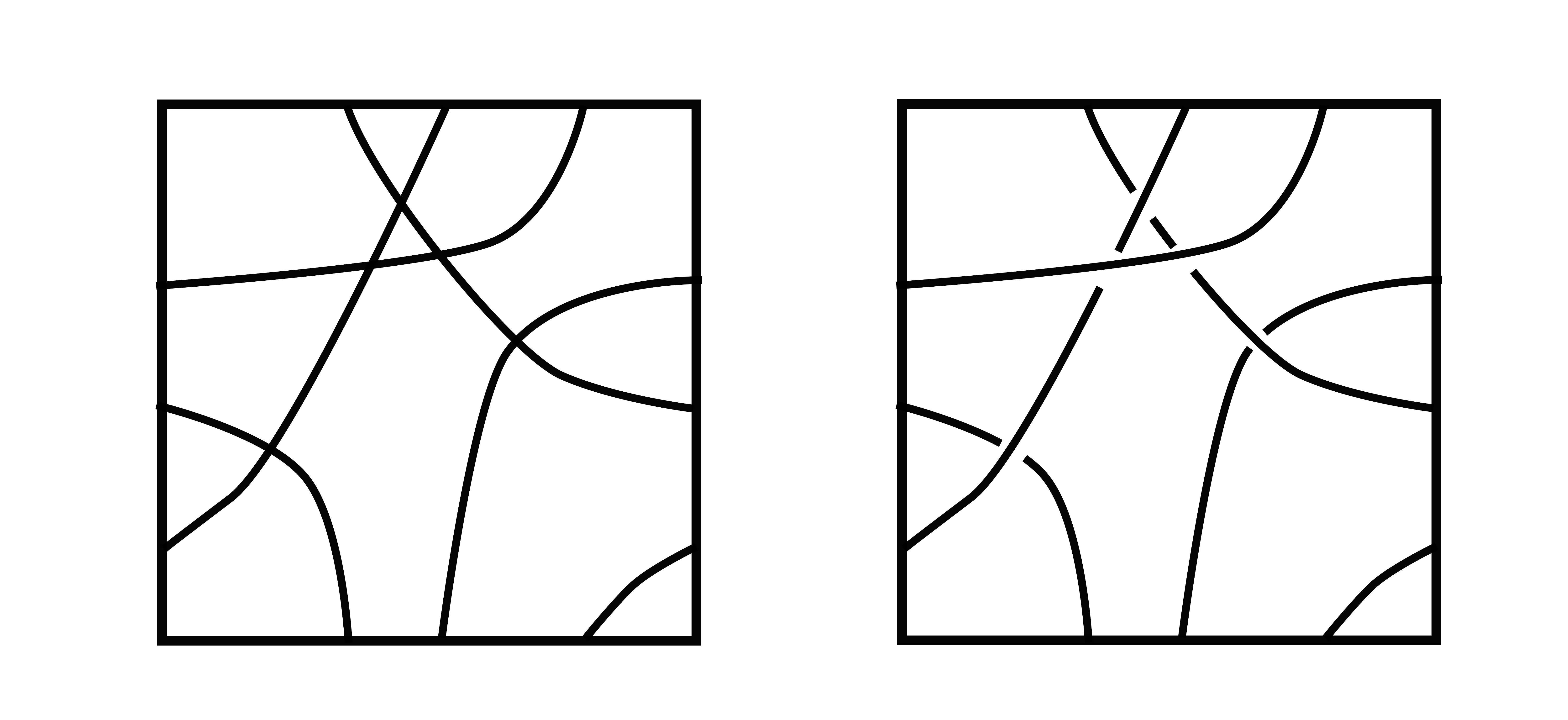}
\caption{A Rampichini diagram. The transpositions at $t=0$ are $\tau_1=(1\to 4)$, $\tau_2=(1\to3)$ and $\tau_3=(1\to2)$. A band word for the fiber $F_{\varphi=2\pi-\varepsilon}$ is given by $a_{1,2}a_{3,4}^{-1}a_{1,4}$. \label{fig:rampi}}
\end{figure}

The labels in a square diagram can only change at the right edge $(\varphi=2\pi)$ of the square, where all labels are shifted by $+1\text{ mod }n$ or at intersection points between the different curves. An intersection point between two curves in the square at $(\varphi_*,t_*)$ occurs if and only if $\arg(v_j(t_*))=\arg(v_{j+1}(t_*))=\varphi_*$ for some $j$, where the indexing of the critical values at $t=t_*$ is taken to be that at $t_*-\varepsilon$ for sufficiently small $\varepsilon>0$. The labels $\tau_j$ and $\tau_{j+1}$ at the intersection point change as follows. If $|v_j(t)(t_*)|<|v_{j+1}(t_*)|$, then 
\begin{align}\label{eq:label1}
\tau_j(t_*+\varepsilon)&=\tau_{j+1}(t_*-\varepsilon)\nonumber\\
\tau_{j+1}(t_*+\varepsilon)&=\tau_{j+1}(t_*-\varepsilon)\tau_{j}(t_*-\varepsilon)\tau_{j+1}(t_*-\varepsilon),
\end{align}
and if $|v_j(t)(t_*)|>|v_{j+1}(t_*)|$, then 
\begin{align}\label{eq:label2}
\tau_j(t_*+\varepsilon)&=\tau_j(t_*-\varepsilon)\tau_{j+1}(t_*-\varepsilon)\tau_j(t_*-\varepsilon)\nonumber\\
\tau_{j+1}(t_*+\varepsilon)&=\tau_{j}(t_*-\varepsilon).
\end{align}

In other words, the indices of the labels are swapped and the label of the critical value with smaller absolute value is conjugated by the label of the other critical value. In Figure~\ref{fig:rampi}a) we only label arcs of the curves with transpositions (as opposed to every single point), since it is understood that the labels do not change along the arcs.

%Which of the band relations in Eq.~\eqref{eq:1rel} and Eq.~\eqref{eq:2rel} describes the change of the labels is determined by the absolute values of $v_j(t_*)$ and $v_{j+1}(t_*)$. The label of the critical value with larger absolute value remains unchanged, while the label of the critical value with small absolute value is conjugated by the label of the other critical value. 

If $g_t$ is a loop, then its square diagram can be interpreted as a torus as the critical values at $t=0$ along with their labels match those at $t=2\pi$. The roots of $g_t$ form a P-fibered geometric braid if and only if the curves in the corresponding square diagram are strictly monotone increasing or strictly monotone decreasing, i.e., they can be locally interpreted as graphs of strictly monotone increasing functions $t(\varphi)$, (corresponding to $\tfrac{\partial \arg(v_j(t))}{\partial t}>0$) or strictly monotone decreasing functions (corresponding to $\tfrac{\partial \arg(v_j(t))}{\partial t}<0$). In this case we call the square diagram a \textit{Rampichini diagram}.

Instead of using transpositions as labels we may use band generators, where $(i\to j)$ is replaced by $a_{i,j}$ if the corresponding line in the Rampichini diagram is strictly monotone increasing and by $a_{i,j}^{-1}$ if the line is strictly monotone decreasing. This way the rules Eq.~\eqref{eq:label1} and Eq.~\eqref{eq:label2} become the band relations  Eq.~\eqref{eq:1rel} and Eq.~\eqref{eq:2rel}.  

Rampichini diagrams have the following nice properties. At every fixed value of $t$ apart from those with non-distinct arguments of critical values the labels $\tau_j(t)\in S_n$ at that height $t$, indexed with increasing $\varphi$, satisfy $\underset{j=1}{\overset{n}{\prod}}\tau_j(t)=(1\to n\to n-1\to \ldots,\to3\to2)$, since we have a cactus at all such values of $t$. At every fixed value of $\varphi$ (away from intersection points of curves in the square) the labels (thought of as BKL-generators) spell a band word for the fiber surface $F_{\varphi}=\arg(g_t)^{-1}(\varphi)$ when read from the bottom to the top (i.e., with increasing $t$).

Every P-fibered braid can be represented by a Rampichini diagram (after a small homotopy of $g_t$ in $X_n$ to ensure that the the critical values are distinct) and, conversely, every Rampichini diagram describes a P-fibered braid \cite{bode:braided, rampi}. We say that a BKL-word $w$ is a \textit{P-fibered braid word} if it appears in a Rampichini diagram $R$ as the band word for a fiber surface $F_{\varphi}$. In this case we also say that $R$ is a Rampichini diagram for the P-fibered braid word $w$.

In order to keep Rampichini diagrams clear and less cluttered, we will from now on omit the labels apart from those at the edges of the square. Instead we will at each intersection point in the diagram mark one curve as the undercrossing curve by deleting the curve in a small neighbourhood of the intersection (as is common in knot diagrams), cf. Figure~\ref{fig:rampi}b). By choosing the curve whose critical value has the smaller absolute value to be the undercrossing strand, we can move between these two visual representations of Rampichini diagrams without losing any information, since at each crossing the label of the undercrossing curve is conjugated by the label of the overcrossing curve.

\begin{remark}
Originally, the decision to use crossings (as in knot diagrams) instead of labels in Rampichini diagrams was motivated by purely aesthetic aspects, as Rampichini diagrams with many intersection points needed many labels, which makes the visual representation rather confusing. However, it turns out that it is no coincidence that the resulting Figure~\ref{fig:rampi}b) is a link diagram of a link in a thickened torus (i.e., a virtual link) with some labels on the edge of the square. We know that every P-fibered braid corresponds to a Rampichini diagram and vice versa. The critical values $v_j(t)$ of the corresponding polynomial $g_t$, which are known to be non-zero, form a closed (affine) braid 
\begin{equation}
\underset{t\in[0,2\pi]}{\cup}\underset{j=1}{\overset{n}{\cup}}(v_j(t),\rme^{\rmi t})
\end{equation} 
in $(\mathbb{C}\backslash\{0\})\times S^1$, which is a thickened torus (see \cite{bode:adicact} and also recent work by Manturov and Nikonov \cite{manturov}). It is this link that appears in a Rampichini diagram with our new crossing convention. The resulting virtual link is braided both in the $t$-direction and the $\varphi$-direction, i.e., transverse to all lines of constant $t$ and all lines of constant $\varphi$. Any virtual Reidemeister move that preserves this braiding property corresponds to a homotopy of the loop in the space of critical values, which lifts to a homotopy of the loop in $X_n$ \cite{bode:braided}. In other words, the braided open book described by a Rampichini diagram does not change under isotopies of the corresponding virtual link as long as the link remains doubly-braided throughout the isotopy. (In fact, it is sufficient if it remains braided in the $t$-direction throughout the isotopy and ``doubly braided'' at the end of the isotopy.) We thus have a 1-1-correspondence between Rampichini diagrams and links in a thickened torus that are ``doubly braided'' and labeled with certain transpositions. (Note that not every choice of labels on the boundary of the square diagram can be completed to a Rampichini diagram \cite{mortramp,rampi}.)
\end{remark}

\begin{definition}\label{def:simple}
We call a Rampichini diagram $R$ simple if the corresponding banded fiber surface consists of $n$ disks connected by exactly $n-1$ bands. In other words, any horizontal line $(t=constant)$ has $n-1$ intersections with the curves in $R$ (counted with multiplicity) and any vertical line $(\varphi=constant)$ also has $n-1$ intersections with the curves in $R$ (counted with multiplicity).
\end{definition}
%Geometrically this means that for every fiber disk $D_t$ there are $n-1$ tangential intersection points with the collection of fibers $F_{\varphi}$ and for every fiber surface $F_{\varphi}$ there are $n-1$ tangential intersection points with the collection of fiber disks $D_t$.

Every fiber surface $F_{\varphi}$ coming from a simple Rampichini diagram is a braided surface with $n$ disks and $n-1$ bands. Thus they all have genus 0 and a connected boundary. Therefore, they are bounded by an unknot. The Rampichini diagram in Figure~\ref{fig:rampi}a) is simple. It represents a braid on 4 strands, since every horizontal line has 3 intersection points with the curves in the square (counted with multiplicities), and every fiber surface $F_{\varphi}$ has 3 bands, since each vertical line has 3 intersection points with the curves (again, counted with multiplicities.)

Let 
\begin{equation}
\tilde{V}_n=\{(v_1,v_2,\ldots,v_{n-1})\in(\mathbb{C}\backslash\{0\})^{n-1}:v_i\neq v_j\text{ if }i\neq j\}
\end{equation}
and
\begin{equation}
V_n=\tilde{V}_n/S_{n-1},
\end{equation}
where the action of the symmetric group $S_{n-1}$ permutes the different $v_i$.  Let $X'_n$ be the space of polynomials in $X_n$ with distinct critical values. Then we may think of $V_n$ as the space of sets of critical values of polynomials in $X'_n$. The map $\theta_n:X'_n\to V_n$ that sends a polynomial to its set of critical values maps a loop $g_t$ in $X'_n$ to a loop in $V_n$.

\begin{definition}\label{def:comp}
Let $g_t$ be a loop in $X'_n$ and $R$ its Rampichini diagram. Then we say that $R$ is pure if the loop $\theta_n(g_t)$ in $V_n$ lifts to a loop in $\tilde{V}_n$.
\end{definition}

In other words, a Rampichini diagram is pure if the corresponding critical values form a pure braid in $(\mathbb{C}\backslash\{0\})\times S^1$. In this case we have two different ways to index the critical values. The first one is described above, where at every value of $t$, we order the critical values by their arguments. This means that the indices are not constant along curves of critical values. They change at intersection points in the Rampichini diagram and at the right edge of the square. The second one is given by the indices at $t=0$ and keeping the indices constant along each curve of critical values, i.e., the indexing comes from thinking of $\theta_n(g_t)$ as a loop in $\tilde{V}_n$. In order to distinguish these two different orderings of critical values, we write $v(t)_j$ for the first ($0\leq\arg(v(t)_j)<\arg(v(t)_k)<2\pi$ if and only if $j<k$) and $v_j(t)$ for the second ($j<k$ if and only if $0\leq\arg(v(0)_j)<\arg(v(0)_k)<2\pi$). Note that the index of a transposition $\tau_j(t)$ always refers to the indexing of the first kind, i.e., $\tau_j(t)$ is the transposition associated with $v(t)_j$.

Since the critical values of loops of polynomials corresponding to P-fibered braids satisfy $\tfrac{\partial\arg(v_j(t))}{\partial t}\neq 0$, every curve of critical values $v_j(t)$ has an associated sign $\varepsilon_j\in\{\pm 1\}$, which is equal to $\sign\left(\tfrac{\partial\arg(v_j(t))}{\partial t}\right)$ and which determines whether the corresponding curve in the Rampichini diagram is strictly monotone increasing or decreasing. We call $\varepsilon_j$ the \textit{sign of the critical value} $v_j(t)$ and say that a critical value $v_j$ is positive/negative if $\varepsilon_j$ is positive/negative.

The composition of two paths in $V_n$ corresponds to a square diagram that is obtained by gluing the top edge of one square diagram along the bottom edge of the other square diagram. Since the paths have matching endpoints, the labels and endpoints in the respective square diagrams also match. Likewise, if the $\varphi$-coordinates and corresponding labels at the top edge of one square diagram match those at the bottom edge of another square diagram, we may glue the two diagrams along these edges to obtain a new diagram.

\subsection{Inner loops}
Let $(v_1,v_2,\ldots,v_{n-1})\in\tilde{V}_n$ with $\arg(v_j)\neq\arg(v_k)$ if $j\neq k$. The $v_i$s are not necessarily ordered by their arguments.

\begin{definition}
An inner loop is a loop $\gamma_{j}$ (or $\gamma_{j}^{-1}$) in $\tilde{V}_n$ based at $(v_1,v_2,\ldots,v_{n-1})$, where every critical value except one is stationary and one critical value $v_{j}$ moves in a counterclockwise (or clockwise) loop exactly once around the origin, such that whenever $\arg(v_{j}(t))=\arg(v_i(t))$ with $j\neq i$ we have $|v_{j}(t)|<|v_i(t)|$. We call $v_{j}$ the moving critical value of this inner loop.
\end{definition}

\begin{figure}[H]
\centering
\labellist
\pinlabel $0$ at 780 840
\pinlabel $v_1(t)$ at 1300 1150
\pinlabel $v_2(t)$ at 100 250
\pinlabel $v_3(t)$ at 1100 150
\pinlabel $v_4(t)$ at 1470 580
\endlabellist
\includegraphics[height=5.5cm]{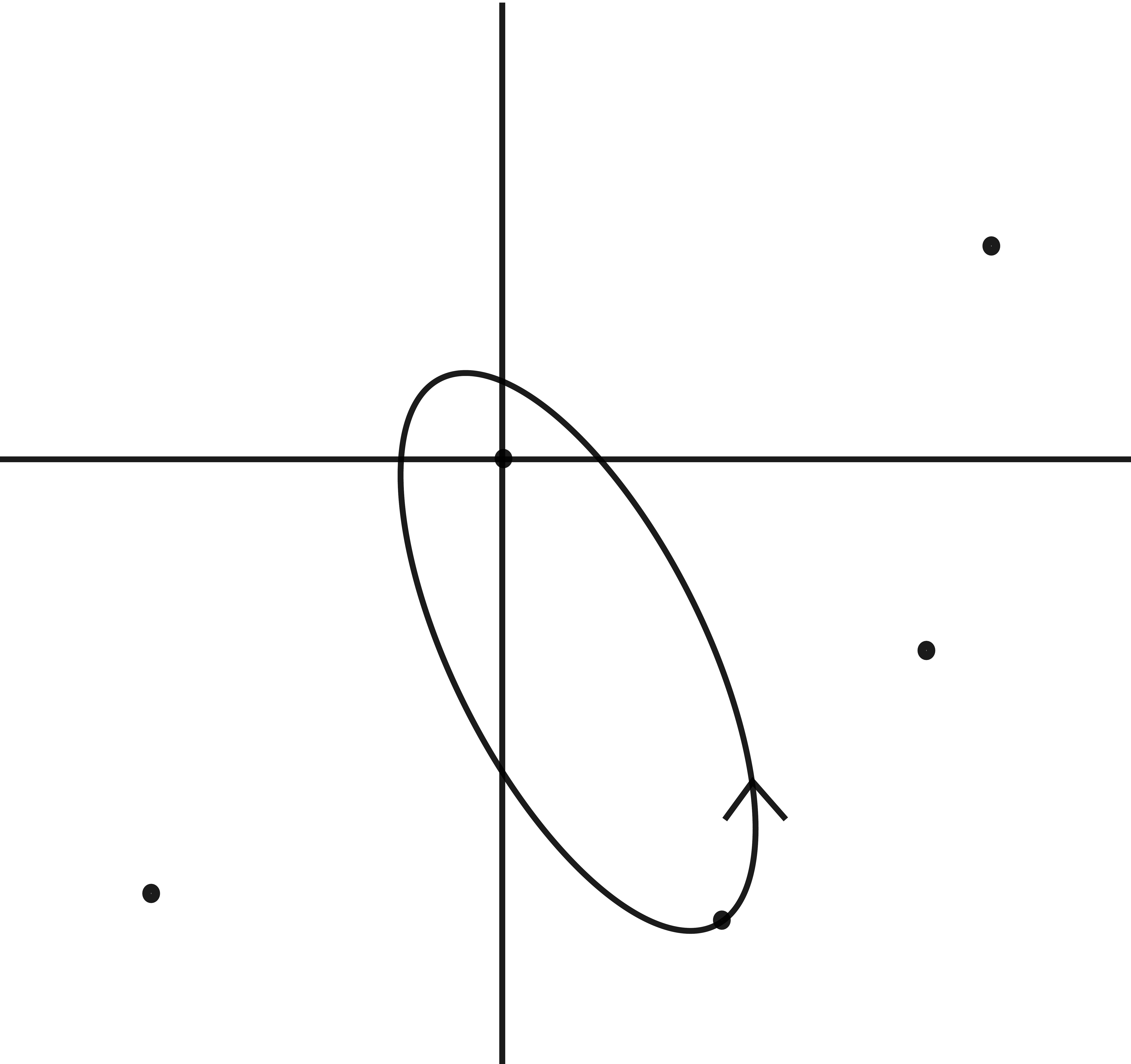}
\caption{The inner loop $\gamma_3$. \label{fig:innerloop}}
\end{figure}

Figure~\ref{fig:innerloop} shows the motion of the critical values during an inner loop. For a path $\gamma$ in a topological space parametrised by $t$ in $[0,2\pi]$ we write $\gamma|_{a}^b$ for the segment of the path $\gamma$ between $\gamma(a)$ and $\gamma(b)$.

\begin{definition}
Let $g_t$ a loop in $X'_n$, whose roots form a P-fibered braid and let $R$ be the corresponding Rampichini diagram, which we assume to be pure. Then $\theta_n(g_t)$ can be viewed as a loop in $\tilde{V}_n$. Let $t_i\in[0,2\pi]$, $i=1,2,\ldots,M$, be such that the arguments of the critical values of $g_{t_i}$ are distinct for each fixed $t_i$ and let $j(i)\in\{1,2,\ldots,n-1\}$, $i=1,2,\ldots,M$. Let $\varepsilon_{j(i)}$ be the sign of the critical value $v_{j(i)}(t)$.
Let $R'$ be the square diagram corresponding to the following loop in $\tilde{V}_n$: 
\begin{equation}\label{eq:loop}
\theta_n(g_t)|_0^{t_1}\circ\gamma_{j(1)}^{\varepsilon_{j(1)}}\circ \theta_n(g_t)|_{t_1}^{t_2}\circ\gamma_{j(2)}^{\varepsilon_{j(2)}}\circ\ldots\theta_n(g_t)|_{t_i}^{t_{i+1}}\circ\gamma_{j(i+1)}^{\varepsilon_{j(i+2)}}\circ\ldots\circ\gamma_{j(M)}^{\varepsilon_j(M)}\circ\theta_n(g_t)|_{t_M}^{2\pi}.
\end{equation}
Then we say that $R'$ is obtained from $R$ by inserting inner loops.
\end{definition}

First note that every loop in $\tilde{V}_n$ corresponds to the critical values of some path in $X_n$ \cite{bode:braided}, so that $R'$ is actually well-defined as the square diagram of that path. (The path is not unique, but the choice of path does not matter.) For the construction of $R'$ we may assume that the loop in Eq.~\eqref{eq:loop} is rescaled, so that it is parametrised by $t\in[0,2\pi]$. The following lemma establishes that the corresponding path in $X'_n$ is actually a loop and moreover, $R'$ is a Rampichini diagram.

\begin{lemma}\label{lem:insert}
Let $R'$ be a square diagram that is obtained from a Rampichini diagram $R$ by inserting inner loops. Then, after a small isotopy of the loop in Eq.~\eqref{eq:loop}, $R'$ is a Rampichini diagram.
\end{lemma}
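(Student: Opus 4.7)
The plan is to verify the two defining properties of a Rampichini diagram for $R'$: first, that the lift of the loop in Eq.~\eqref{eq:loop} from $\tilde{V}_n$ to $X'_n$ is itself a loop (so that $R'$ does come from a loop of polynomials), and second, that after a small isotopy of Eq.~\eqref{eq:loop} every curve in the resulting square diagram is strictly monotone in $t$.

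For the first point, since $\theta_n(g_t)$ lifts to the loop $g_t$ by assumption, it is enough to show that each inserted inner loop $\gamma_{j(i)}^{\varepsilon_{j(i)}}$, based at the polynomial $g_{t_i}\in X'_n$, lifts to a loop in $X'_n$. The defining inequality $|v_{j(i)}(t)|<|v_k(t)|$ at every argument coincidence confines the critical point $c_{j(i)}(t)$ during $\gamma_{j(i)}$ to a small topological disk $U\subset \mathbb{C}$ that contains exactly the two roots of $g_{t_i}$ joined by the singular leaf through $c_{j(i)}$ and avoids every other critical point of $g_{t_i}$. In a neighbourhood of $c_{j(i)}$ the polynomial takes the local form $(u-c_{j(i)})^2+v_{j(i)}$ up to higher order terms, so when $v_{j(i)}$ travels once around the origin the two roots in $U$ exchange by a half-twist while all other roots remain unchanged. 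Since a polynomial in $X'_n$ is determined by its unordered multiset of roots, swapping two roots returns us to $g_{t_i}$, and the lift of $\gamma_{j(i)}^{\varepsilon_{j(i)}}$ to $X'_n$ is a loop.

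For the monotonicity, the curves inherited from each segment $\theta_n(g_t)|_{t_i}^{t_{i+1}}$ are strictly monotone because $R$ is Rampichini. During each inner loop $\gamma_{j(i)}^{\varepsilon_{j(i)}}$ exactly one critical value moves, with argument strictly monotone in the sign $\varepsilon_{j(i)}$, producing a single strictly monotone curve. The remaining $n-2$ critical values are stationary and would contribute horizontal segments to the square diagram. To remove these, one performs a small isotopy: during each inner loop, perturb the stationary critical values by a small smooth argument variation in the direction of their signs $\varepsilon_k$ inherited from $R$, tapering the perturbation to zero at the endpoints of the inner loop so as not to disturb the matching with the adjacent segments $\theta_n(g_t)|_{t_{i-1}}^{t_i}$ and $\theta_n(g_t)|_{t_i}^{t_{i+1}}$. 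For a sufficiently small perturbation the defining inequality of the inner loop is preserved, all crossings between the moving curve and the previously stationary curves persist with only small shifts in the $t$-coordinate, and the label updates at these crossings continue to obey Eqs.~\eqref{eq:label1} and \eqref{eq:label2}. Hence $R'$ satisfies all defining properties of a Rampichini diagram.

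The main obstacle is the argument that each inner loop lifts to a \emph{loop}, rather than merely a path, in $X'_n$: it relies crucially on the confining inequality in the definition of $\gamma_{j(i)}$. Without it one could not isolate the motion of $c_{j(i)}$ inside a disk $U$ containing only two roots, so additional roots could become entangled with the moving pair and the lift might close up only after several traversals, or not at all.
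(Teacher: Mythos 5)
Your overall strategy --- showing directly that each inserted inner loop lifts to a \emph{loop} in $X'_n$, so that the composite path in Eq.~\eqref{eq:loop} automatically closes up and the labels at the top and bottom edges of $R'$ automatically agree --- is a legitimate alternative architecture to the paper's proof, which instead verifies the label-matching combinatorially and then invokes the arguments of Theorem 5.6 of \cite{bode:braided} to conclude that the lifted path is a loop. However, your justification of that key step has a genuine gap. The confining inequality $|v_{j(i)}(t)|<|v_k(t)|$ constrains critical \emph{values}, not critical points, and does not by itself trap $c_{j(i)}(t)$ in a small disk $U$ containing exactly the two relevant roots and no other critical point throughout the loop. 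More seriously, the local model $(u-c_{j(i)})^2+v_{j(i)}$ is valid only near $c_{j(i)}$, whereas the roots are preimages of $0$ and in that model sit at distance of order $\sqrt{|v_{j(i)}|}$ from the critical point; since $|v_{j(i)}|$ is not assumed small, the quadratic approximation need not govern the motion of the roots, and the assertions that exactly those two roots exchange by a half-twist and that ``all other roots remain unchanged'' are unproved. The paper closes this gap algebraically: using the cactus identity $\prod_{j=1}^{n-1}\tau_j(t)=(1\to n\to n-1\to\ldots\to 3\to 2)$, valid at every height $t$, it shows that the conjugated transposition $\tau'_k$ of the moving critical value equals $\tau_k(t_i)$ after the loop; since the critical values also return to their starting positions, Riemann's uniqueness theorem (as packaged in Theorem 5.6 of \cite{bode:braided}) forces the lifted path to close up. Your geometric picture is the right intuition for why the monodromy of an inner loop is a single band generator, but making it rigorous would essentially require re-deriving that machinery.

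A smaller point on the monotonicity step: a perturbation of the stationary critical values that is required to taper to zero at both endpoints of the inner-loop interval necessarily produces a non-monotone argument variation there (the argument must move off its value and return to it). The correct small isotopy instead lets each previously stationary argument drift strictly monotonically in the direction of its sign $\varepsilon_k$ across the inner-loop interval, absorbing the resulting endpoint mismatch into a reparametrisation of the adjacent segments of $\theta_n(g_t)$; this is what the paper means by a ``small deformation of the curves''.
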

\begin{proof}
Figure~\ref{fig:rampi_twist} shows how the insertion of an inner loop affects a Rampichini diagram. It corresponds to the insertion of a line that is almost horizontal and that loops around the $\varphi$-coordinate exactly once. Since $|v_{j}(t)|<|v_i(t)|$ at all intersection points, the inserted line lies below all other curves in the diagram and thus its label must be conjugated at each crossing point in the diagram, while the other labels remain unaffected. Since the sign of the inner loop $\gamma_j^{\varepsilon_j}$ matches the sign $\varepsilon_{j}$ of the label of the moving critical value $v_j(t)$, the curves in $R'$ are again strictly monotone after a small deformation of the curves (so that the critical values that are not moving during an inner loop become non-stationary). We now have to show that the labels $\tau_j(2\pi)$ at the top edge of the square diagram are identical to the labels $\tau_j(0)$ at the bottom edge. By the arguments in the proof of Theorem 5.6 in \cite{bode:braided} this implies that the corresponding path in $X'_n$ is a loop and hence $R'$ is a Rampichini diagram.

\begin{figure}[H]
\labellist
\Large
\pinlabel a) at  50 840
\pinlabel b) at 1000 840
\small
\pinlabel 0 at 150 110
\pinlabel 0 at 210 50
\pinlabel $2\pi$ at 850 50
\pinlabel $2\pi$ at 140 750
\pinlabel $(1,4)$ at 900 540
\pinlabel $(3,4)$ at 900 380
\pinlabel $(1,2)$ at 900 210
\pinlabel $(1,4)$ at 400 790
\pinlabel $(1,3)$ at 530 790
\pinlabel $(1,2)$ at 690 790
\pinlabel $(1,3)$ at 1830 255
\pinlabel $(1,3)$ at 610 260
\pinlabel $(1,2)$ at 1830 	200
\pinlabel $(3,4)$ at 1830 380
\pinlabel $(1,4)$ at 1830 530
\pinlabel $(1,4)$ at 1300 790
\pinlabel $(1,3)$ at 1445 790
\pinlabel $(1,2)$ at 1600 790 
\pinlabel 0 at 1080 110
\pinlabel 0 at 1110 50
\pinlabel $2\pi$ at 1750 50
\pinlabel $2\pi$ at 1060 750
\Large
\pinlabel $t$ at 100 450
\pinlabel $\varphi$ at 500 20
\pinlabel $t$ at 1040 450
\pinlabel $\varphi$ at 1400 20
\endlabellist
\centering
\includegraphics[height=5cm]{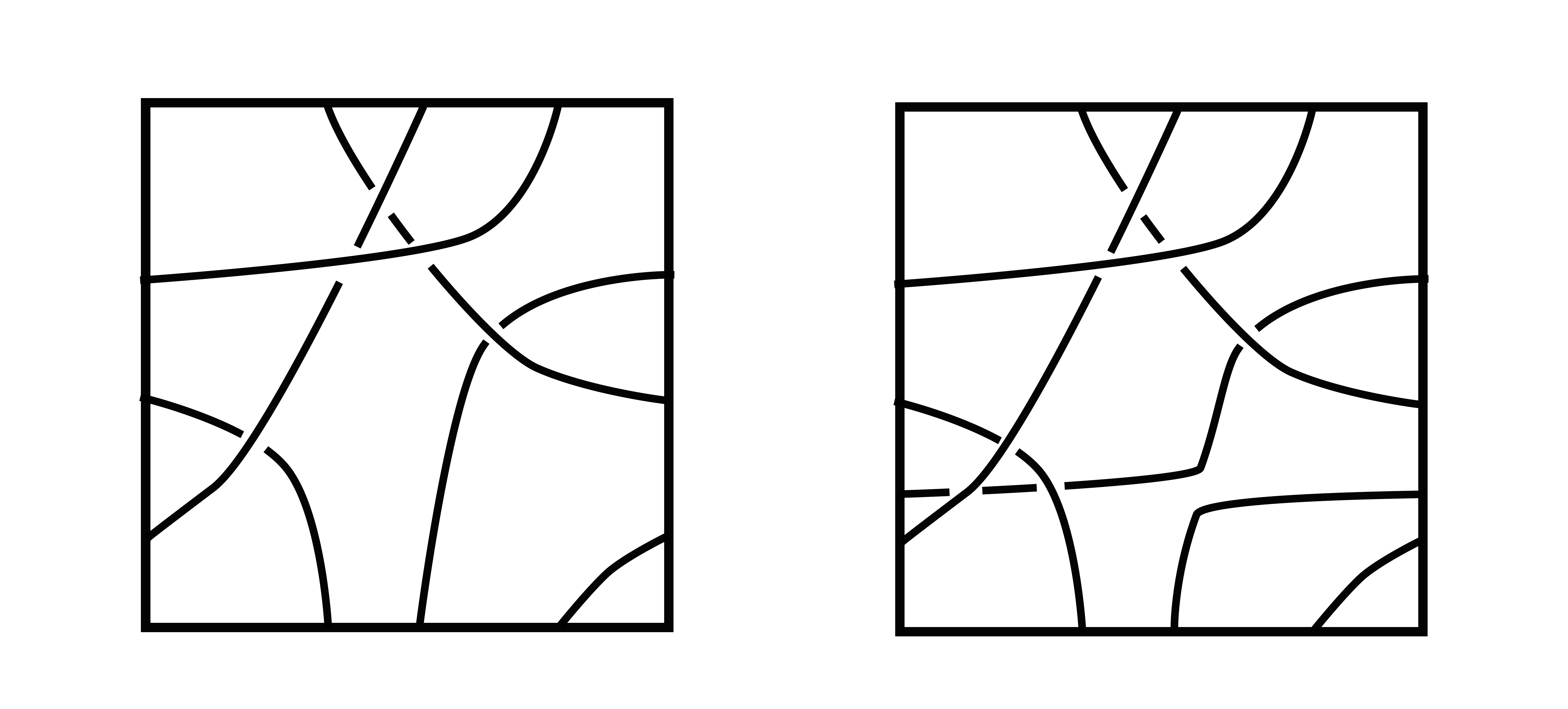}
\caption{a) A simple, pure Rampichini diagram. b) The Rampichini diagram after the insertion of an inner loop.\label{fig:rampi_twist}}
\end{figure}

We know that during an inner loop only the label of the moving critical value changes. We have to show that after the completion of the inner loop it is again the original label. Let $\tau_j(t_i)$, $j=1,2,\ldots,n-1$, be the transpositions in $R$ associated with the critical values at the height $t=t_i$ at which an inner loop will be inserted, ordered by increasing value of $\varphi$. Let $k$ be the index of the moving critical value for the inserted inner loop $\gamma_{j(i)}^{\varepsilon_{j(i)}}$, i.e., $v_{j(i)}(t_i)=v(t_i)_k$ in $R$. Let $\tau'_k$ denote its label after the inner loop.

By one of the defining properties of Rampichini diagrams we have that $\underset{j=1}{\overset{n-1}{\prod}}\tau_j(t_i)=(1\to n\to n-1\to\ldots\to3\to2)$. In fact, this equality is true for any height $t$ in a square diagram, not just at $t=t_i$. We thus have
\begin{equation}
\prod_{j=1}^{n-1}\tau_j(t_i)=(1\to n\to n-1\to\ldots\to3\to2)=\left(\prod_{j=1}^{k-1}\tau_j(t_i)\right)\tau'_k\left(\prod_{j=k+1}^{n-1}\tau_j(t_i)\right),
\end{equation}
where the left-hand side describes the cactus at $t=t_i-\varepsilon$ and the right-hand side the cactus at $t=t_i+\varepsilon$ for some small positive $\varepsilon$. This implies that $\tau_k(t_i)=\tau'_k$ after multiplying $\left(\underset{j=1}{\overset{k-1}{\prod}}\tau_j(t_i)\right)^{-1}$ from the left and multiplying $\left(\underset{j=k+1}{\overset{n-1}{\prod}}\tau_j(t_i)\right)^{-1}$ from the right. Thus the labels after an inserted inner loop are the same as the labels before the inner loop. Since $R$ is a Rampichini diagram, the labels at its top edge are equal to the label at its bottom edge. It follows that the same holds for $R'$, which proves the lemma.
\end{proof}

Instead of inserting inner loops into Rampichini diagrams we can also insert them into a diagram as in Figure~\ref{fig:trivial_rampi} corresponding to a trivial loop in $X_n'$, where all curves in the diagram are vertical. We call such a square diagram \textit{trivial}. Note that there are also non-trivial paths, whose square diagrams are trivial, since the only requirement is that the argument of the critical values do not change. Since there are no intersections of the curves with each other or with the right edge of the square, the labels in a trivial square diagram never change. 

\begin{figure}[H]
\labellist
\Large
\pinlabel a) at  50 840
\pinlabel b) at  1000 840
\small
\pinlabel 0 at 150 110
\pinlabel 0 at 210 50
\pinlabel $2\pi$ at 850 50
\pinlabel $2\pi$ at 150 750
\pinlabel $(1,4)$ at 410 790
\pinlabel $(1,3)$ at 530 790
\pinlabel $(1,2)$ at 690 790
\Large
\pinlabel $t$ at 100 450
\pinlabel $\varphi$ at 500 20
\endlabellist
\centering
\includegraphics[height=5cm]{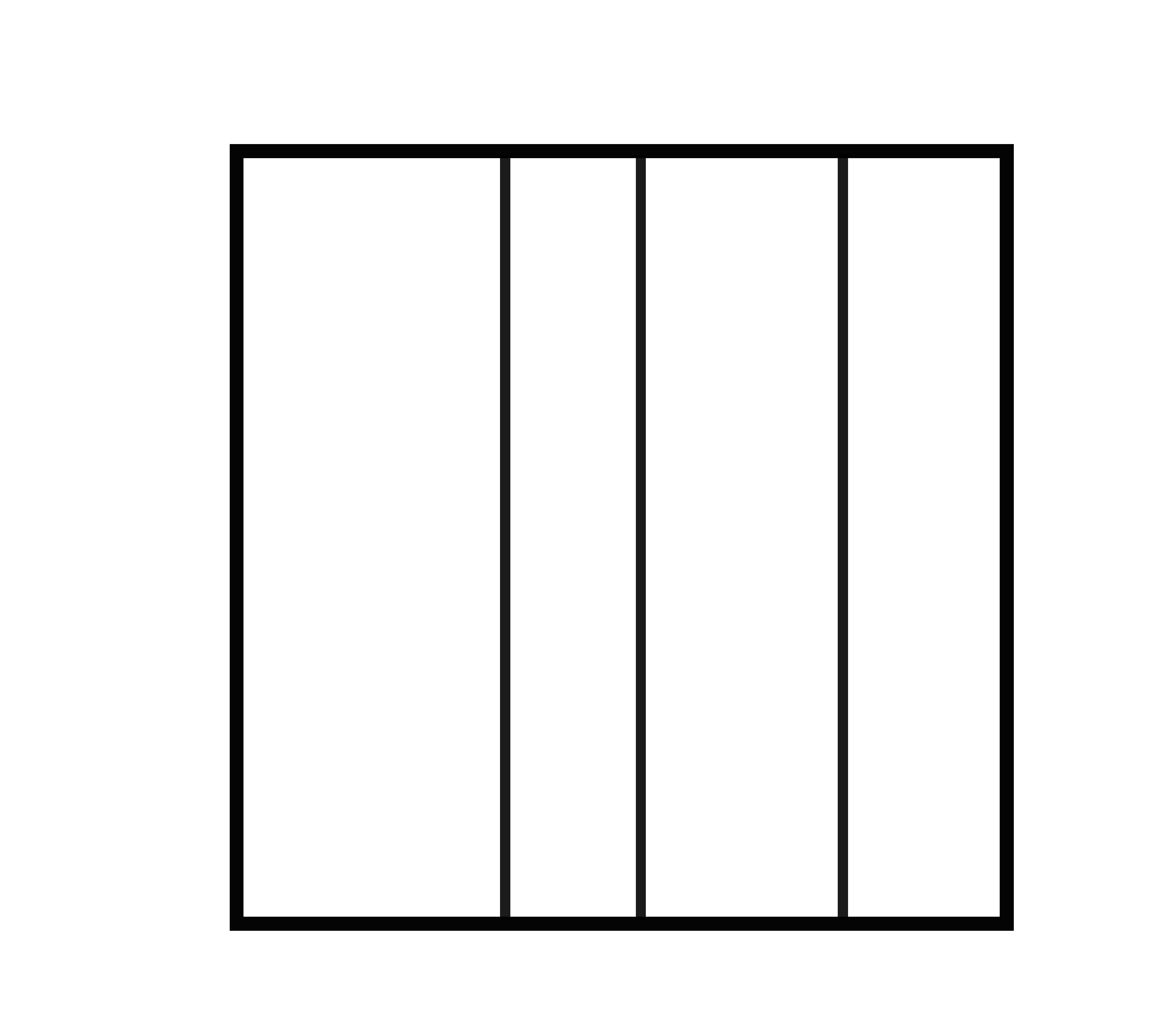}
\labellist
\small
\pinlabel 0 at 60 110
\pinlabel 0 at 110 50
\pinlabel $2\pi$ at 750 50
\pinlabel $2\pi$ at 60 750
\pinlabel $(1,4)$ at 330 790
\pinlabel $(1,3)$ at 450 790
\pinlabel $(1,2)$ at 610 790
\Large
\pinlabel $t$ at 60 410
\pinlabel $\varphi$ at 420 20
\endlabellist
\centering
\includegraphics[height=5cm]{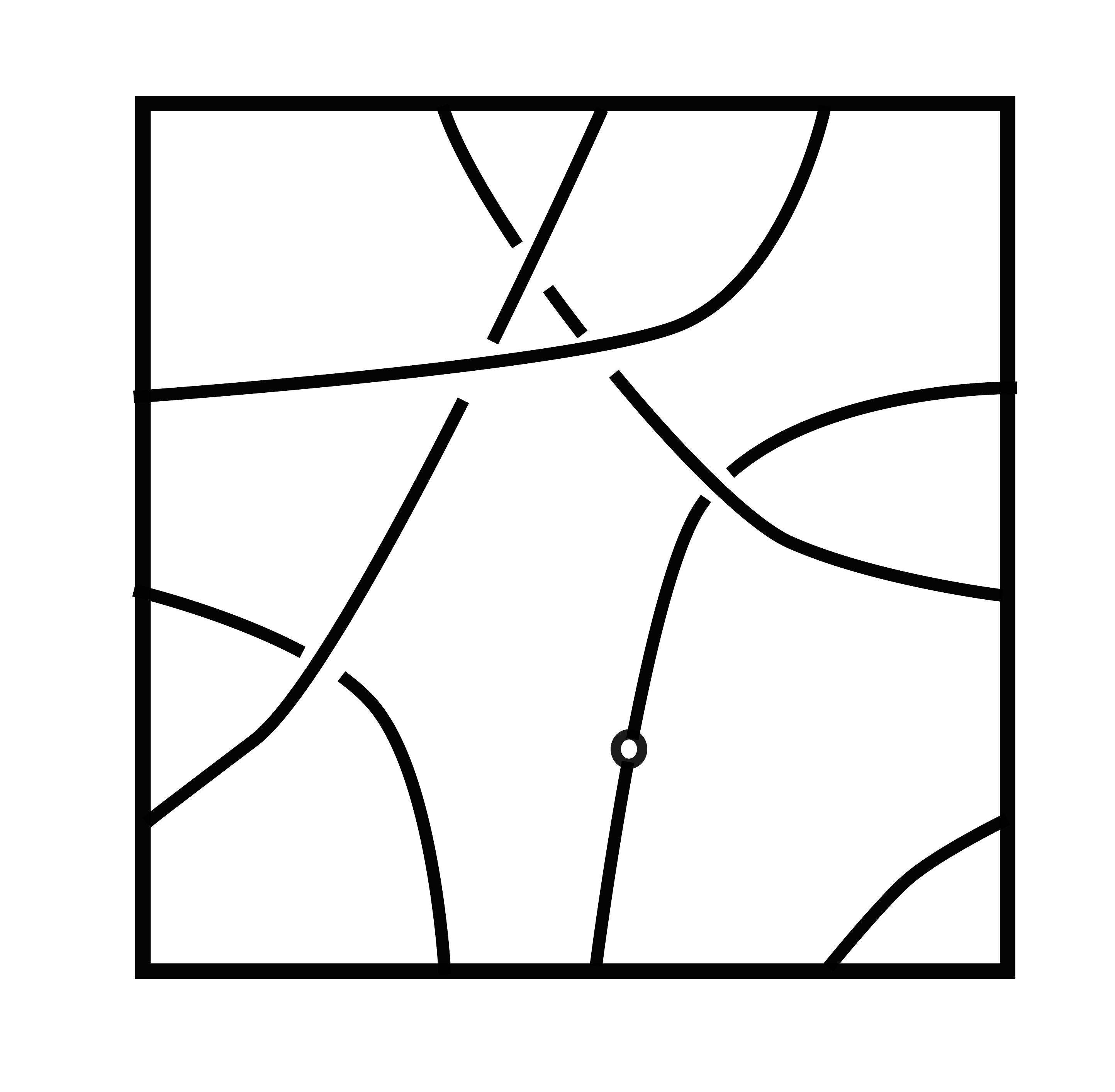}
\caption{a) A trivial square diagram corresponding to a constant loop in the space of polynomials. b) A singular Rampichini diagram.  \label{fig:trivial_rampi}}
\end{figure}

A trivial square diagram exists for any choice of cactus $\tau_j$, $j=1,2,\ldots,n-1$, which by definition satisfies $\underset{i=1}{\overset{n}{\prod}}\tau_i=(1\to n\to n-1\to \ldots\to 3\to 2)$ and signs $\varepsilon_j\in\{\pm1\}$, $j=1,2,\ldots,n-1$. Inserting a finite number of inner loops $\gamma_{j(i)}^{\varepsilon_{j(i)}}$, $i=1,2,\ldots,M$, into the diagram results in a Rampichini diagram $R'$ by the same arguments as in Lemma~\ref{lem:insert}.

%We can now build a motion of the critical values that corresponds to a P-fibered braid and thus to a Rampichini diagram by concatenating the loops $\gamma_j^{\varepsilon_j}$ and deforming it slightly so that $\tfrac{\partial \arg(v_j(t))}{\partial t}\neq 0$ for all $j$ and all $t$.

We can read off a band word for the resulting fiber surface $F_{2\pi-\varepsilon}$ of the corresponding braid from the Rampichini diagram $R'$. Note that every critical value $v_j(t)$ always contributes the same letter, which we call $a_j^{\varepsilon_j}$. Its sign is always equal to the sign $\varepsilon_j$ of the critical value, while its associated transposition is $\tau_{j}^{\prod_{i=j+1}^{n+1}\tau_{i}}$. Here $a^b$ denotes conjugation of a group element $a$ by another group element $b$. The definition of $a_j^{\varepsilon_j}$ clearly depends on the choice of cactus $\{\tau_j\}_{j=1,2,\ldots,n-1}$.

\begin{definition}\label{def:thomo}
A braid $B$ is called T-homogeneous if there is a choice of cactus $\{\tau_j\}_{j=1,2,\ldots,n-1}$ and signs $\varepsilon_j\in\{\pm 1\}$, so that $B$ can be represented by a word that only contains the letters $a_j^{\varepsilon_j}$ and that contains $a_j^{\varepsilon_j}$ for every $j=1,2,\ldots,n-1$.
\end{definition} 

In some texts these braids are also called \textit{strict(ly) T-homogeneous braids} to emphasize the second property, that every $a_j^{\varepsilon_j}$ appears in the word \cite{rudolph2}.

T-homogeneous braids are thus exactly those braids that can be obtained from a trivial square diagram as in Figure~\ref{fig:trivial_rampi}a) by insertion of inner loops. Each inserted inner loop can be interpreted as a loop in $\tilde{V}_n$ and a trivial square diagram corresponds to a constant loop in $\tilde{V}_n$. Therefore every T-homogeneous braid has a pure Rampichini diagram.

\begin{example}
We may choose $\tau_j=(1\to n-j+1)$ and obtain $a_j^{\varepsilon_j}=a_{n-j,n-j+1}^{\varepsilon_j}=\sigma_{n-j}^{\varepsilon_j}$, so that T-homogeneous braids for this choice of cactus are exactly the \textit{homogeneous braids} in Artin generators $\sigma_j$.
\end{example} 

Definition~\ref{def:thomo} describes the family of T-homogeneous braids in terms of their braid words. An alternative, equivalent definition explains the origin of the ``T'' in the name. Let $T$ be an embedded tree in $\mathbb{C}$ with $n$ vertices. We may assign to every edge $e$ a sign $\varepsilon_e\in\{\pm 1\}$. We define a loop $\gamma_e$ in $X_n$ as follows. The basepoint of $\gamma_e$ is the polynomial in $X_n$ whose roots are the positions of the vertices of $T$. Throughout the loop all roots that do not correspond to endpoints of the edge $e$ remain stationary, while the two endpoints of $e$ exchange their position as in Figure~\ref{fig:tree}b), where the way in which they swap depends on the sign $\varepsilon_e$. A geometric braid $B$ is T-homogeneous if its corresponding loop of polynomials $g_t$ is the composition of loops of the form $\gamma_e$, where $e$ is running through the set of edges of $T$, and for every edge $e$ the corresponding loop $\gamma_e$ appears in the factorisation, i.e., $g_t=\prod_{i=1}^{\ell}\gamma_{e_i}$, where $\prod$ denotes composition of loops, and for every edge $e$ there is an $i\in\{1,2,\ldots,\ell\}$ with $e_i=e$. The two definitions are equivalent with different choices of an embedded tree $T$ (up to planar isotopy) corresponding to different choices of a cactus $\{\tau_j\}_{j=1,2,\ldots,n-1}$. Homogeneous braids correspond to a path graph (or ``linear graph'').

\begin{figure}
\centering
\labellist
\Large
\pinlabel a) at 20 800
\pinlabel b) at 1550 800 
\pinlabel $+$ at 1050 550
\pinlabel $-$ at 680 540
\pinlabel $-$ at 420 590
\pinlabel $-$ at 350 320
\pinlabel $+$ at 1900 730
\pinlabel $-$ at 1900 330
\endlabellist
\includegraphics[height=4.5cm]{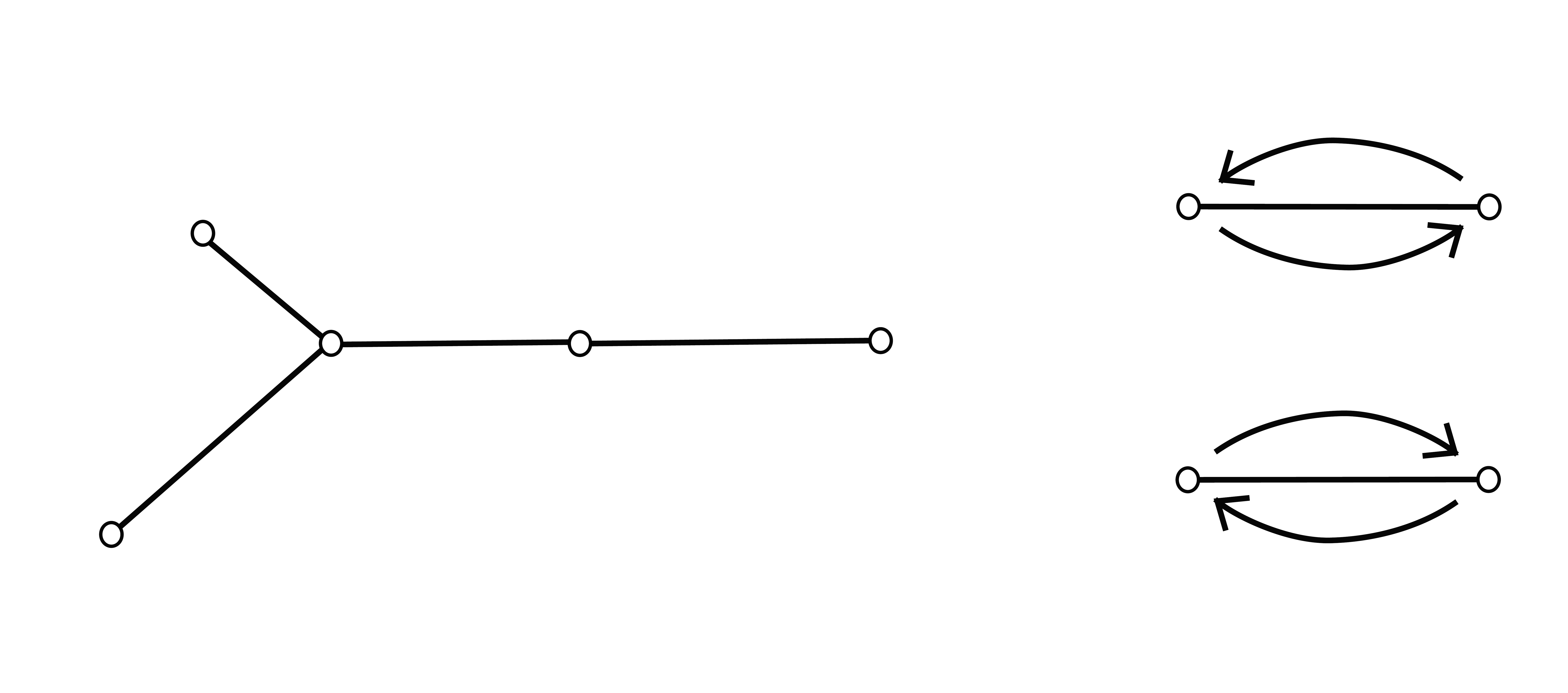}
\labellist
\Large 
\pinlabel c) at 20 880
\endlabellist
\includegraphics[height=4.5cm]{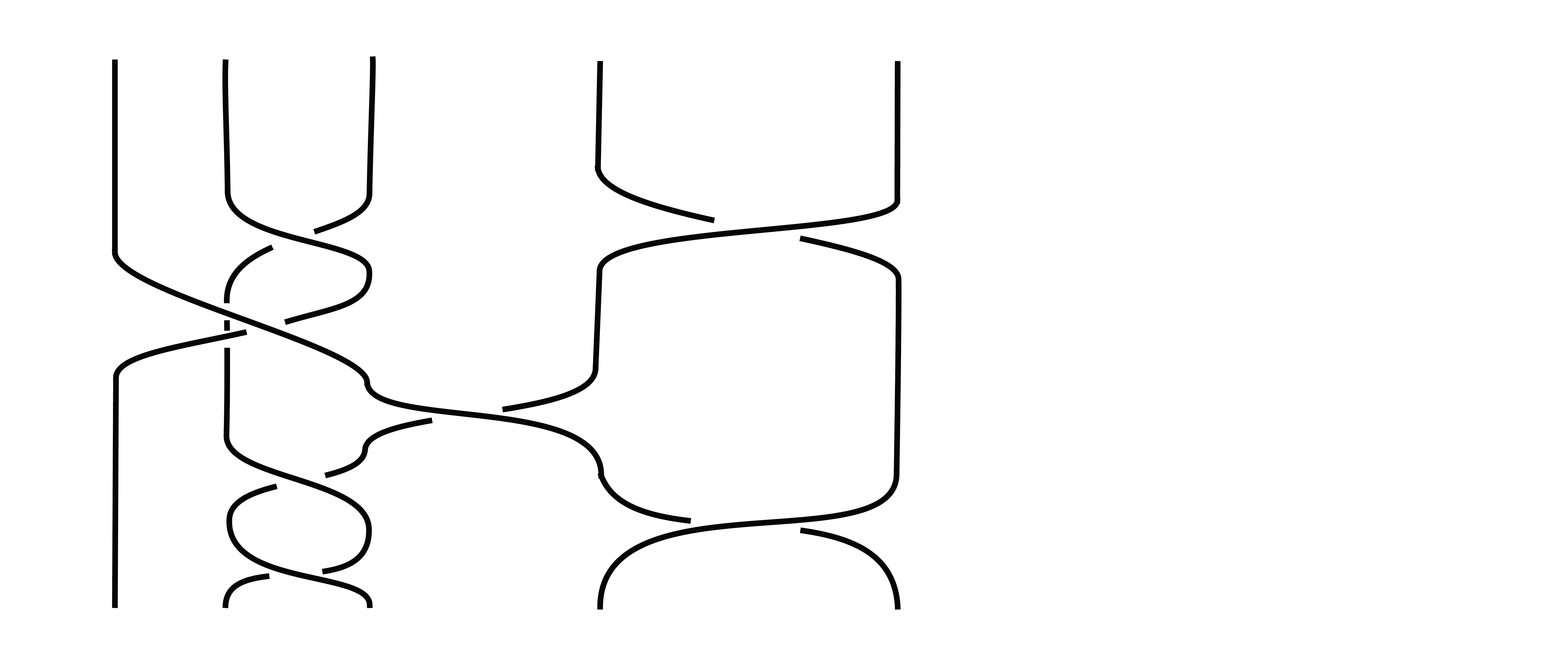}
\caption{a) An embedded tree $T$ in the complex plane. The signs $\varepsilon_e$ are drawn on each edge $e$. b) The loop $\gamma_e$ exchanges the roots on the endpoints of $e$ as in the upper picture if $\varepsilon_e=+1$ and as in the lower picture if $\varepsilon_e=-1$. c) A T-homogeneous braid with $T$ as in Subfigure a). \label{fig:tree}}
\end{figure}

Homogeneous braids are known to close to fibered links \cite{stallings2}. In fact, all T-homogeneous braids are known to close to bindings of totally braided open books \cite{rudolph2}, which is a property that is equivalent to being P-fibered \cite{bode:braided}.

Let $B$ be a P-fibered geometric braid with corresponding loop of polynomials $g_t$ and a pure Rampichini diagram $R$. Deforming its loop of critical values $(v_1(t),v_2(t),\ldots,v_{n-1}(t))$ in $V_n$ lifts to a deformation of $g_t$ and as long as the deformation only changes the absolute values of the $v_j(t)$s, the Rampichini diagram and the fiberedness property remain unchanged. Now consider such a deformation, where we only change one of the curves of critical values $v_j(t)$ such that it becomes 0 at some $t=\tau\in[0,2\pi]$ and call the lifted loop $\tilde{g}_t$. Then $\tilde{g}_{\tau}$ has a double root and therefore the roots of $\tilde{g}_t$ do not form a braid, but a singular braid. Since the argument of the critical value at $t=\tau$ is not defined, we cannot associate to it a label or a transposition like we usually do in Rampichini diagrams. We define a \textit{singular Rampichini diagram} $R_{\{(t_i,j(i)\}_{i=1,2,\ldots,M}}$ to be a Rampichini diagram $R$ where at a finite number of distinct values of $t=t_i$, $i=1,2,\ldots,M$, one of the curves, corresponding to the critical value $v_{j(i)}(t_i)$, in $R$ at height $t$ is replaced by a small circle. The rest of the diagram is unchanged. This is displayed in Figure~\ref{fig:trivial_rampi}b), which can be obtained from the Rampichini diagram in Figure~\ref{fig:rampi_twist}a). The circles represent values of $t$ at which a critical value becomes zero. While Rampichini diagrams visualise certain loops in $V_n$ that lift to loops in $X_n$, singular Rampichini diagrams visualise certain loops in the space of $n-1$ distinct complex numbers (a space that is homeomorphic to $X_{n-1}$), which lift to loops in the space of monic polynomials of degree $n$, but not to loops in $X_n$.

Note that not every loop $h_t$ in the space of monic polynomials of fixed degree $n$ gives rise to a singular Rampichini diagram. Apart from the usual condition that $\tfrac{\partial \arg(v_j)}{\partial t}(t)\neq 0$ for all $v_j(t)\neq 0$, it is necessary that $\underset{t\to t_i}{\lim}\arg(v_{j(i)}(t))$ and $\underset{t\to t_i}{\lim}\tfrac{\partial\arg(v_{j(i)}(t))}{\partial t}$ are well-defined, with the latter being non-zero. Furthermore, it was an assumption that there is only a finite number of singular crossings and no higher multiplicity than 2 for any root of $h_t$.

Since the roots of a loop of polynomials $h_t$ corresponding to a singular Rampichini diagram form a singular braid, the level sets of $\arg(h_t)$ are not braided surfaces anymore. Therefore, the property of Rampichini diagrams that we can read off band words for the fibers, is not true anymore.

\section{Odd P-fibered braids}\label{sec:odd}

P-fibered braids play a big role in constructions of real algebraic links. If $B$ is a P-fibered braid, then the closure of $B^2$ is real algebraic \cite{bode:real}. The corresponding semiholomorphic polynomial can be constructed explicitly as
\begin{equation}
f(u,r\rme^{\rmi t})=r^{2kn}g\left(\frac{u}{r^{2k}},\rme^{2\rmi t}\right),
\end{equation}
where $k$ is a sufficiently large integer, $n$ is the number of strands and $g_t(u)=g(u,t)$ is the loop corresponding to the P-fibered geometric braid $B$ parametrised in terms of trigonometric polynomials, so that the coefficients of $g$ (as a polynomial in $u$) are polynomials in $\rme^{\rmi t}$ and $\rme^{-\rmi t}$. Note that $f$ is radially weighted homogeneous, i.e., $f(\lambda^{2k} u,\lambda v)=\lambda^{2kn}f(u,v)$ for all $\lambda\in\mathbb{R}$. This construction requires 2-periodicity of a braid (i.e., $B^2$ instead of $B$), since this guarantees that $f$, which is a priori a polynomial in $u$, $v$, $\overline{v}$ and $\sqrt{v\overline{v}}$, is actually a polynomial in $u$, $v$ and $\overline{v}$, since all terms with square-roots come with an even exponent.

In \cite{bode:real} we discuss another symmetry, which will be essential for the present paper. Instead of the even symmetry of $B^2$, where all frequencies in the trigonometric parametrisation of the roots are even and thus $g_{t+\pi}=g_t$, we require the odd symmetry $g_{t+\pi}(u)=-g_t(-u)$ for all $t\in[0,\pi)$ and all $u\in\mathbb{C}$. In this case, we say that $g_t$ is \textit{odd}.

Polynomials in $\rme^{\rmi t}$ and $\rme^{-\rmi t}$ with complex coefficients are also called finite Fourier series or complex trigonometric polynomial. We reserve the term trigonometric polynomial for finite Fourier series that are real functions. They can thus be expressed as $\mathbb{R}$-linear combinations of 1, $\cos(kt)$ and $\sin(kt)$, where $k$ goes through a finite range of natural numbers. 

We write $f^{(i)}$ for the $i$th derivative of a function $f$. The $C^k$-norm is denoted by $|f|_k=\underset{i\leq k}{\sup}\ \underset{x\in\mathbb{R}}{\sup}|f^{(i)}(x)|$. 
%This means that the coefficient of $u^j$ is an odd function

% Since trigonometric polynomials with odd frequencies are dense in the space of odd periodic functions with the $C^k$-norm, this is equivalent to requiring that $g_{t+\pi}(u)=-g_t(-u)$ for all $t\in[0,\pi)$ and all $u\in\mathbb{C}$. 

A weaker version of the following result, where all components were required to have the same number of strands, was proved in Lemma V.4 in \cite{bode:real}.

\begin{theorem}\label{thm:weighted}
Let $B$ be a P-fibered geometric braid on $n$ strands with corresponding loop of polynomials $g_t$. Assume that the coefficients of $g_t$ are finite Fourier series. If $n$ is odd and $g_t$ is odd, then the closure of $B$ is real algebraic, with the corresponding polynomial map given by
\begin{equation}\label{eq:weighted}
f(u,r\rme^{\rmi t})=r^{kn}g\left(\frac{u}{r^{k}},\rme^{\rmi t}\right),
\end{equation}
where $g(u,t)=g_t(u)$ and $k$ is a large odd integer. 
\end{theorem}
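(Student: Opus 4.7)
The plan is to follow the three-step strategy of Lemma V.4 in \cite{bode:real}, adapted to the odd rather than even symmetry, verifying in turn that: (i) the map $f$ defined by Eq.~\eqref{eq:weighted} is a genuine polynomial in $u$, $v$, $\overline{v}$; (ii) the vanishing set $f^{-1}(0)\cap S_\rho^3$ is ambient isotopic to the closure of $B$; and (iii) the origin is an isolated singularity of $f$.

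For step (i), I would expand $g_t(u) = u^n + \sum_{i=0}^{n-1} c_i(t)\,u^i$ with each coefficient a finite Fourier series $c_i(t) = \sum_m a_{i,m}\rme^{\rmi mt}$. The odd symmetry $g_{t+\pi}(u) = -g_t(-u)$ forces $c_i(t+\pi) = (-1)^{i+1} c_i(t)$, so (using that $n$ is odd) the spectrum of $c_i$ consists only of integers $m$ with $m \equiv n-i\pmod{2}$. Substituting $v=r\rme^{\rmi t}$ into Eq.~\eqref{eq:weighted} writes each term of $f$ as $a_{i,m}\rme^{\rmi mt} r^{k(n-i)} u^i$; replacing $\rme^{\rmi mt} r^{|m|}$ by $v^m$ or $\overline{v}^{|m|}$ leaves a residual radial factor $r^{k(n-i)-|m|}$. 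A parity count shows this exponent is even exactly when $k$ is odd, and taking $k$ large ensures it is non-negative, so $f\in\mathbb{C}[u,v,\overline{v}]$.

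For step (ii), the construction makes $f$ radially weighted homogeneous: $f(\lambda^k u,\lambda v)=\lambda^{kn}f(u,v)$ for $\lambda\geq 0$, so $f^{-1}(0)$ is a cone under this scaling. A standard diffeomorphism from $S_\rho^3$ to a weighted ellipsoid transverse to the associated flow identifies $f^{-1}(0)\cap S_\rho^3$ with $f^{-1}(0)\cap\{|v|=1\}$. On $\{|v|=1\}$ the polynomial restricts to $f(u,\rme^{\rmi t})=g_t(u)$, whose zero set is precisely the geometric braid $B$ in $\mathbb{C}\times S^1$, closing up to the closure of $B$.

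The core step is (iii). By weighted homogeneity, the critical set of $f$ is again a cone, so it suffices to exclude non-origin critical points on $\{|v|=1\}$. Semiholomorphicity reduces this to the two conditions $\partial_u f = 0$ and $|\partial_v f| = |\partial_{\overline{v}}f|$. Computing $\partial_u f = r^{k(n-1)} g_t'(u/r^k)$, the first condition forces $u/r^k$ to be a critical point $c_j(t)$ of $g_t$, with critical value $v_j(t)=g_t(c_j(t))\neq 0$. Evaluating the other derivatives by the chain rule for $r$ and $t$ in terms of $v$ and $\overline{v}$, and using $\dot v_j(t)=\dot g_t(c_j(t))$ (which follows from $g_t'(c_j(t))=0$), I expect to obtain
\begin{equation*}
\partial_v f \;=\; \tfrac{\overline{v}}{2}\bigl(kn\,v_j(t)-\rmi\dot v_j(t)\bigr),\qquad \partial_{\overline{v}}f \;=\; \tfrac{v}{2}\bigl(kn\,v_j(t)+\rmi\dot v_j(t)\bigr),
\end{equation*}
whence $|\partial_v f|^2-|\partial_{\overline{v}}f|^2 = kn\,|v_j(t)|^2\,\partial\arg(v_j(t))/\partial t$. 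The P-fibered hypothesis says this is nowhere zero, ruling out critical points on $\{|v|=1\}$; together with $f(u,0)=u^n$ (non-critical for $u\neq 0$), this isolates the singularity at the origin. I expect the main obstacle to be the chain-rule computation in (iii): one has to track the interplay between the Euler contribution from the radial weighting and the angular derivative of $g_t$ carefully enough that the difference $|\partial_v f|^2 - |\partial_{\overline{v}}f|^2$ collapses to the P-fibered quantity without leftover $k$-dependent terms that could spoil the argument.
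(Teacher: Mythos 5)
Your proposal is correct and follows essentially the same route as the paper: verify polynomiality via the parity of the Fourier spectrum forced by the odd symmetry together with $k$ odd, use radial weighted homogeneity to reduce everything to $\{|v|=1\}$, and translate the full-rank condition for the semiholomorphic $f$ into the nonvanishing of $kn\,|v_j(t)|^2\,\partial\arg(v_j(t))/\partial t$, which is exactly the P-fibered hypothesis. The only difference is one of presentation: the paper delegates steps (ii) and (iii) to the arguments of Theorem I.1 in \cite{bode:real} and \cite{bode:polynomial}, whereas you carry out the chain-rule computation explicitly, and your resulting identity for $|\partial_v f|^2-|\partial_{\overline{v}}f|^2$ is the correct one.
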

\begin{proof}
Eq.~\eqref{eq:weighted} can be rewritten as
\begin{equation}
f(u,v)=\sqrt{v\overline{v}}^{kn}g\left(\frac{u}{\sqrt{v\overline{v}}^k},\frac{v}{\sqrt{v\overline{v}}}\right).
\end{equation}

As in the case of the even symmetry of $B^2$, choosing $k$ sufficiently large clears the denominator and since $k$, $n$ and $g_t$ are all odd, all monomials involving $\sqrt{v\overline{v}}$ come with an even exponent, so that $f$ is a polynomial in $u$, $v$ and $\overline{v}$.

Note that $f$ is radially weighted homogeneous, since $f(\lambda^k u,\lambda v)=\lambda^{kn}f(u,v)$. It then follows by the same arguments as in the proof of Theorem I.1 in \cite{bode:real} that the origin is an isolated singularity if and only if $B$ is P-fibered. Furthermore, for each fixed $r$ the zeros of $f|_{|v|=r}$ form the closed braid $B$ in $\mathbb{C}\times rS^1$. By the same arguments as in \cite{bode:polynomial} or \cite{bode:real} the link of the singularity is the closure of $B$.
\end{proof}

Note that the condition that coefficients of $g_t$ should be finite Fourier series is not really a restriction. For an odd loop of polynomials $g_t$ with $\deg_u(g_t)$ odd, the coefficient $a_j(t)$ of $u^j$ is an odd function, i.e., $a_j(t+\pi)=-a_j(t)$, if $j$ is even, and an odd function, i.e., $a_j(t+\pi)=a_j(t)$ if $j$ is odd. Odd/even trigonometric polynomials are dense in the the space of odd/even $2\pi$-periodic functions in the $C^k$-norm for any natural number $k$. Therefore, if $B$ is a P-fibered geometric braid, whose corresponding loop of polynomials $g_t$ is odd, then an arbitrarily close approximation of $B$ is an isotopic P-fibered braid, whose corresponding loop of polynomials is odd and has coefficients that are finite Fourier series.

\begin{definition}
A cactus $\{\tau_j\}_{j=1,2,\ldots,2n-2}$ with $\tau_j=(k(j)\to\ell(j))$ is called odd if it satisfies 
\begin{equation}
\tau_{j+n-1}=(k(j)+n-1\ (\text{mod }2n-1)\to \ell(j)+n-1\ (\text{mod }2n-1))
\end{equation}
for all $j=1,2,\ldots,n-1$.
\end{definition}

\begin{lemma}\label{lem:oddcactus}
Let $g:\mathbb{C}\to\mathbb{C}$, $g(u)=\underset{j=1}{\overset{2n-1}{\prod}}(u-z_j)$ be a monic polynomial of degree $2n-1$ with an odd cactus and critical values $v_j$, $j=1,2,\ldots,n-1$, that satisfy 
\begin{align}
0&< \arg(v_1)<\arg(v_2)<\ldots<\arg(v_{n-1})<\pi\nonumber\\
&<\arg(v_n)<\arg(v_{n+1})<\ldots<\arg(v_{2n-2})<2\pi.
\end{align} 
Then $\tilde{g}:\mathbb{C}\to\mathbb{C}$, $\tilde{p}(z)=\underset{j=1}{\overset{2n-1}{\prod}}(u+z_j)$ has the same cactus as $g$. 
\end{lemma}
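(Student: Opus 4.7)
The plan is to prove the lemma by direct computation, tracking how the boundary hits of each critical leaf transform under the map $u\mapsto -u$. First, I would standardize so that $g(\rme^{\rmi\chi})=\rme^{\rmi N\chi}$ on $\partial D$, where $N=2n-1$. Since $N$ is odd, a quick check using $\tilde g(u)=-g(-u)$ gives $\tilde g(\rme^{\rmi\chi})=-g(-\rme^{\rmi\chi})=-\rme^{\rmi N(\chi+\pi)}=\rme^{\rmi N\chi}$, so both polynomials share the same labels $1,\ldots,N$ and arcs $A_1,\ldots,A_N$ on $\partial D$.

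Next, the identity $\arg(\tilde g(u))=\arg(g(-u))+\pi$ shows that the critical leaf of $\tilde g$ through $-c_k$ is exactly the image under $u\mapsto -u$ of the critical leaf of $g$ through $c_k$. Under the standardization, a leaf at level $\alpha$ meets $\partial D$ at the $N$ angles $\alpha/N+2\pi m/N$ with $m\in\{0,1,\ldots,N-1\}$, and the hit at index $m$ sits in arc $A_m$ (with $m=0$ identified with $A_N$). Combining the rotation of the boundary by $\pi$ with the level change from $\arg(v_k)$ to $\arg(-v_k)$ yields an arc-index shift $m\mapsto m+n-1\pmod{N}$ when $\arg(v_k)\in(0,\pi)$ (so $\arg(-v_k)=\arg(v_k)+\pi$) and $m\mapsto m+n\pmod{N}$ when $\arg(v_k)\in(\pi,2\pi)$ (so $\arg(-v_k)=\arg(v_k)-\pi$). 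Hence, writing $\tau_k=(p_k\to q_k)$, the transposition associated to $-c_k$ in $\tilde g$ is obtained from $\tau_k$ by shifting both entries by $n-1\bmod N$ in the upper-half case and by $n\bmod N$ in the lower-half case.

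To finish, I would re-index the critical values of $\tilde g$ by argument. With $\tilde v_j=-v_{j+n-1}$ for $j=1,\ldots,n-1$ (so $v_{j+n-1}$ lies in the lower half-plane) and $\tilde v_j=-v_{j-n+1}$ for $j=n,\ldots,2n-2$ (so $v_{j-n+1}$ lies in the upper half-plane), the shift rule combined with the odd cactus relation $\tau_{j+n-1}=(k(j)+n-1\to\ell(j)+n-1)\bmod N$ produces, for $j=1,\ldots,n-1$,
\[
\tilde\tau_j=\bigl(k(j)+(n-1)+n\to\ell(j)+(n-1)+n\bigr)\bmod N=\bigl(k(j)\to\ell(j)\bigr)=\tau_j,
\]
since $(n-1)+n=2n-1\equiv 0\pmod{N}$. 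For $j=n,\ldots,2n-2$, the upper-half shift rule gives $\tilde\tau_j=(k(j-n+1)+n-1\to\ell(j-n+1)+n-1)\bmod N=\tau_j$ directly by the odd cactus relation. The main obstacle is the careful bookkeeping of arc indices: the two distinct shift values in the upper- and lower-half cases must combine with the odd-cactus shift by $n-1$ through the identity $(n-1)+n\equiv 0\pmod{2n-1}$ in order to recover the same cactus for $\tilde g$.
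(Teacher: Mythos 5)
Your proof is correct and follows essentially the same route as the paper's: both identify the critical values of $\tilde{g}$ as $-v_k$ (re-indexed by $n-1$ modulo $2n-2$), observe that the singular foliation of $\arg(\tilde g)$ is the $\pi$-rotation of that of $\arg(g)$, and let the odd-cactus relation absorb the resulting label shifts. The only difference is that you explicitly derive the two arc-index shifts ($n-1$ for upper-half critical values, $n$ for lower-half ones, combining to $2n-1\equiv 0$) from the boundary normalization $g(\rme^{\rmi\chi})=\rme^{\rmi N\chi}$, whereas the paper asserts these shifts without computation; your version is a more detailed account of the same argument.
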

\begin{proof}
By assumption $n-1$ critical values $v_j$, $j=1,2,\ldots,n-1$, of $g$ have an argument between $0$ and $\pi$, while the critical values $v_{j+n-1}$, $j=1,2,\ldots,n-1$, of $g$ have an argument between $\pi$ and $2\pi$.
Note that the critical values of $\tilde{g}$ are $\tilde{v}_{j+n-1}=-v_j$, $j=1,2,\ldots,2n-2$, where the indices are taken mod $2n-2$. Hence the critical values $\tilde{v}_j$, $j=1,2,\ldots,n-1$, of $\tilde{g}$ have an argument between $0$ and $\pi$ and the critical values $\tilde{v}_{j+n-1}$, $j=1,2,\ldots,n-1$, have an argument between $\pi$ and $2\pi$.

The singular foliation of the disk induced by $\arg(\tilde{g})$ is exactly the singular foliation induced by $\arg(g)$ rotated by $\pi$. Thus if $j\in\{1,2,\ldots,n-1\}$ and $\tau_j=(k(j)\to\ell(j))$, then $\arg(\tilde{v}_{j+n-1})\in(\pi,2\pi)$ with label $\tilde{\tau}_{j+n-1}=(k(j)+n-1\to\ell(j)+n-1)=\tau_{j+n-1}$, since $g$ has an odd cactus.

Likewise, from $\tau_{j+n-1}=(k(j)+n-1\ (\text{mod }2n-1)\to\ell(j)+n-1\ (\text{mod }2n-1))$, $j=1,2,\ldots,n-1$, we get that $\tilde{v}_j$, $j=1,2,\ldots,n-1$, has the label $\tilde{\tau}_j=(k(j)\to\ell(j))=\tau_j$. Thus $\tilde{g}$ has the same cactus as $p$.
\end{proof}

\begin{lemma}\label{lem:trivialpath}
Let $g$ be a polynomial in $X'_{2n-1}$ with an odd cactus and critical values $v_j$, $j=1,2,\ldots,2n-2$, that satisfy the following property. For every $j\in\{1,2,\ldots,2n-2\}$ there is a $k\in\{1,2,\ldots,2n-2\}$ such that $\arg(v_k)=\arg(v_j)+\pi$. Then there is a path in $X'_{2n-1}$ from $g$ to $\tilde{g}$, whose square diagram is trivial, where $\tilde{g}$ is as in Lemma~\ref{lem:oddcactus}.
\end{lemma}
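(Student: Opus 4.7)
The plan is to realise the desired path as the lift to $X'_{2n-1}$ of an explicit path in $V_{2n-1}$ along which no critical value ever changes its argument, and then to absorb the discrepancy between the endpoint of that lift and $\tilde g$ by a time-varying translation in the variable $u$. To set things up, the first step is to unpack the hypothesis: for every $j\in\{1,\ldots,2n-2\}$ there is a unique index $\pi_0(j)$ with $\arg(v_{\pi_0(j)}) = \arg(v_j) + \pi \pmod{2\pi}$, and the resulting involution $\pi_0$ swaps $\{1,\ldots,n-1\}$ with $\{n,\ldots,2n-2\}$. Since the critical values of $\tilde g$ are exactly $\{-v_j\}_j$, ordering them by argument yields $\tilde v_j = -v_{\pi_0(j)}$ and, in particular, $\arg(\tilde v_j) = \arg(v_j)$. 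Combined with Lemma~\ref{lem:oddcactus}, this shows that $g$ and $\tilde g$ have both the same sequence of critical value arguments and the same cactus.

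Next, I would write down the interpolating path of critical values
\begin{equation*}
W(t) = \bigl\{\bigl((1-t)|v_j| + t|v_{\pi_0(j)}|\bigr)\,\rme^{\rmi\arg(v_j)} : j=1,\ldots,2n-2\bigr\}, \quad t\in[0,1],
\end{equation*}
which runs in $V_{2n-1}$ from $\theta_{2n-1}(g)$ to $\theta_{2n-1}(\tilde g)$ while keeping each argument constant and every entry nonzero and pairwise distinct. Using the deformation/lifting property of $\theta_{2n-1}$ recalled in Section~\ref{subsec:rampi}, one lifts $W$ to a continuous path $h_t$ in $X'_{2n-1}$ with $h_0 = g$; the lift stays in $X'_{2n-1}$ because a repeated root appearing along the way would force some critical value to vanish, which $W(t)$ never does. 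By construction the corresponding square diagram consists of vertical line segments only and is therefore trivial, and because none of its curves ever cross each other or wrap around $\varphi = 0$, the cactus of $h_t$ is constant in $t$. In particular, $h_1$ has both the same cactus as $g$ and the same set of critical values as $\tilde g$.

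The hard part will be to guarantee that $h_1$ equals $\tilde g$ on the nose rather than only up to a translation. Two monic polynomials of equal degree sharing the same cactus and the same ordered tuple of critical values are related by the translation $u\mapsto u-c$ for a unique $c\in\mathbb{C}$ (this is the classical uniqueness underlying Hurwitz/Shabat-type parametrizations of polynomials by their critical portrait, and it is the main external ingredient I would need to invoke). Applied here this gives $\tilde g(u) = h_1(u-c)$ for some $c\in\mathbb{C}$. I would then replace $h_t$ by $h'_t(u)\defeq h_t(u+c_t)$, where $c_t$ is any continuous function on $[0,1]$ with $c_0 = 0$ and $c_1 = -c$. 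Translating the variable does not affect critical values, so $h'_t$ is still a lift of $W(t)$, its square diagram remains trivial, and $h'_0 = g$, $h'_1 = \tilde g$ give the required endpoints; after the usual rescaling of the parameter to $[0,2\pi]$, the path $h'_t$ is the trivial-square-diagram path from $g$ to $\tilde g$ claimed by the lemma.
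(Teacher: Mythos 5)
Your proposal is correct and follows essentially the same route as the paper's proof: both use Lemma~\ref{lem:oddcactus} plus the argument-symmetry hypothesis to see that $g$ and $\tilde{g}$ share a cactus and critical-value arguments, run an argument-preserving path in $V_{2n-1}$, lift it to $X'_{2n-1}$, and invoke Riemann's uniqueness theorem (uniqueness of a monic polynomial with given cactus and critical values up to translation in $u$) to close the gap at the endpoint by a translation. The only cosmetic difference is that you absorb the ambiguity by a continuously varying translation $c_t$, whereas the paper normalizes by translating a root to the origin first and appending a translation at the end.
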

\begin{proof}
Riemann's uniqueness theorem states that (once we have fixed a labeling of the arcs $A_j$, $j=1,2,\ldots,2n-1$, on $\partial D$ as we have done at the beginning of Subsection~\ref{subsec:rampi}) for every cactus and set of critical values $v_j$ there is a unique monic polynomial with the given cactus and set of critical values $v_j$, up to translations in the complex plane $z\mapsto z+b$ \cite{bode:braided}.

In particular, we can translate the roots of $g$ in parallel such that one of them becomes 0 without changing the cactus or the set of critical values. In other words, fixing one $k\in\{1,2,\ldots,2n-1\}$ the path in $X'_{2n-1}$ that corresponds to the path of polynomials with roots $z_j(t)=z_j-\tfrac{t}{2\pi}z_k$, $t$ going from 0 to $2\pi$, has a trivial square diagram.

%Likewise there is a path in $X'_{2n-1}$ that starts at $\tilde{g}$ and ends at a translation of $\tilde{g}$ such that one of the roots of the endpoints of the path is 0 and such that the square diagram of the path is trivial.

By Lemma~\ref{lem:oddcactus} $g$ and $\tilde{g}$ have the same cactus and by the given symmetry of the critical values $v_j$ of $g$, the critical values of $\tilde{g}$ have the same arguments as $v_j$, $j=1,2,\ldots,2n-2$. Therefore there is a path in $V_{2n-1}$ that does not change the arguments of the critical values and goes from the set of critical values of $g$ to the set of critical values of $\tilde{g}$. Then this path lifts to a unique path in $X'_{2n-1}$ if we require that throughout the path the constant term of every polynomial is 0 \cite{critical}. By Riemann's uniqueness theorem this path thus goes from the translated $g$ to a translation of $\tilde{g}$. Since the arguments of the critical values do not change along the corresponding path in $V_{2n-1}$, the path in $X'_{2n-1}$ has a trivial square diagram. The composition of the path that translates $g$ and this path, followed by a translation that ends at $\tilde{g}$ is then the desired path in $X'_{2n-1}$ with a trivial square diagram. 
\end{proof}

We know from \cite{bode:braided} that every Rampichini diagram arises from some loop $g_t$ in $X_n$ corresponding to a P-fibered braid.
\begin{definition}\label{def:odd}
We say that a Rampichini diagram is odd if there is a corresponding loop $g_t$ in $X_n$ that is odd.
\end{definition}

We define $m_n:\mathbb{B}_n\to\mathbb{B}_{2n-1}$ to be the the group homomorphism that sends a generator $a_{i,j}$ with $i,j\neq n$ to $a_{i+n,j+n}$ and $a_{i,n}=a_{n,i}$ to $a_{n,i+n}$. Furthermore, we write $\imath_n:\mathbb{B}_n\to\mathbb{B}_{2n-1}$ for the inclusion, which sends $a_{i,j}$ to $a_{i,j}$.

\begin{lemma}
Let $B$ be a P-fibered braid word on $n$ strands. Then there is an odd Rampichini diagram for the P-fibered braid word $\imath_n(B)m_{n}(B)$.
\end{lemma}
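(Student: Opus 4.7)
Plan: The idea is to use the trivial path supplied by Lemma~\ref{lem:trivialpath} as the ``skeleton'' of the desired odd Rampichini diagram, and then to insert inner loops carrying the letters of $B$ into its first half, relying on the odd symmetry to generate the matching second-half contributions automatically.

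First I would fix $g\in X'_{2n-1}$ as in Lemma~\ref{lem:oddcactus}: $g$ has an odd cactus and its critical values satisfy $0<\arg(v_1)<\cdots<\arg(v_{n-1})<\pi<\arg(v_n)<\cdots<\arg(v_{2n-2})<2\pi$, matched pairwise by the odd symmetry. Lemma~\ref{lem:trivialpath} supplies a path $\Gamma_0\colon[0,\pi]\to X'_{2n-1}$ from $g$ to $\tilde g(u)=-g(-u)$ with trivial square diagram. Setting $\Gamma(t)(u):=-\Gamma_0(t-\pi)(-u)$ for $t\in[\pi,2\pi]$ glues $\Gamma_0$ to its negation and produces a genuine loop $\Gamma\colon[0,2\pi]\to X'_{2n-1}$; by construction $\Gamma$ is odd, its (trivial) square diagram is invariant under $(\varphi,t)\mapsto(\varphi+\pi,t+\pi)$, and it represents the identity braid on $2n-1$ strands.

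Writing $B=b_1 b_2\cdots b_\ell$ as a word in BKL generators, I would insert inner loops $\gamma_{j(1)}^{\varepsilon_1},\ldots,\gamma_{j(\ell)}^{\varepsilon_\ell}$ into $\Gamma$ at successive times $0<t_1<\cdots<t_\ell<\pi$, choosing the moving critical value among the $n-1$ in the upper half-plane so that the letter contributed via the conjugation-by-subsequent-cactus recipe (described just before Definition~\ref{def:thomo}) equals $\imath_n(b_i)$. Lemma~\ref{lem:insert} ensures the result remains a Rampichini diagram. By the odd symmetry of $\Gamma$, each inserted inner loop at $(\arg v_{j(i)},t_i)$ is automatically paired with a mirror inner loop at $(\arg v_{j(i)}+\pi,t_i+\pi)$ acting on the critical value $v_{j(i)+n-1}$; the odd cactus condition forces its transposition to be the shift by $n-1\pmod{2n-1}$ of the original, and I would check by a direct computation with the conjugation rule that the mirror letter is precisely $m_n(b_i)$. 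The band word read off over $[0,2\pi]$ therefore becomes $\imath_n(B)\,m_n(B)$, and the completed diagram is odd by construction.

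The hardest step will be the combinatorial identification of the mirror letter with $m_n(b_i)$: the odd cactus shifts arc indices by $n-1\pmod{2n-1}$ while $m_n$ shifts strand indices by $n$ and fixes the strand $n$, so these shifts are genuinely different. I expect the apparent ``off by one'' to be absorbed by the conjugation factors appearing in the letter assignment, combined with the asymmetric treatment of the strand $n$ in the definition of $m_n$. One also has to verify that the final loop is P-fibered, which reduces to the monotonicity of inserted curves already guaranteed by Lemma~\ref{lem:insert}.
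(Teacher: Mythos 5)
Your strategy for achieving the odd symmetry (run the trivial path of Lemma~\ref{lem:trivialpath} from $g$ to $\tilde g$, then concatenate with its negation to get an odd loop) is the same device the paper uses in the second half of its proof. But there is a genuine gap in how you encode the word $B$: you propose to produce the letters $b_1,\dots,b_\ell$ of $B$ by inserting inner loops into a \emph{trivial} square diagram. As the paper points out right after Definition~\ref{def:thomo}, the braid words obtainable from a trivial square diagram by inserting inner loops are exactly the T-homogeneous words: each critical value $v_j$ of the fixed cactus can only ever contribute the single letter $a_j^{\varepsilon_j}$ (its transposition is $\tau_j^{\prod_{i>j}\tau_i}$ and its sign is forced to agree with the sign $\varepsilon_j$ of that curve, which is constant along the whole diagram). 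So from your skeleton $\Gamma$ you have at most $n-1$ available letters in the upper half, with fixed signs, and there is no freedom to "choose the moving critical value so that the letter equals $\imath_n(b_i)$" for an arbitrary P-fibered braid word $B$ — e.g.\ $B$ may contain more than $n-1$ distinct band generators, or a generator together with its inverse. Your argument therefore only proves the lemma for T-homogeneous $B$, not for general $B$ as required (and as needed later, e.g.\ for Theorem~\ref{thm:main2} and Theorem~\ref{thm:delta}).

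The paper avoids this by not rebuilding $B$ from inner loops at all: it takes the Rampichini diagram $R$ that $B$ comes with (by definition of a P-fibered braid word), adjoins $n-1$ new vertical curves carrying the shifted cactus $\tau_{j+n-1}$ so that the basepoint acquires an odd cactus on $2n-1$ strands, routes the old curves of $R$ underneath them, checks by the conjugation rules that this is still a closed diagram, and only then applies the Lemma~\ref{lem:trivialpath}-plus-negation doubling. The verification that the second half reads $m_n(B)$ is then a label-shift computation on the doubled copy of $R$, not a letter-by-letter matching of inner loops. Two smaller points: inserting an inner loop only in $[0,\pi]$ destroys oddness, so the mirror loop at $t_i+\pi$ is not "automatic" — you must insert it explicitly and then check it contributes the right letter; and that identification (which you flag as the hardest step) is left entirely unverified in your write-up.
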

\begin{proof}
%Let $g_t$ be the loop of polynomials corresponding to $B$ and $\gamma_t$ the corresponding loop in the space of critical values $V_n$. We may deform $\gamma_t$ such that the critical values $v_j(0)$, $j=1,2,\ldots,n-1$ at $t=0$ satisfy $v_i\neq v_j$ for all $i\neq j$. This can be achieved by only varying the absolute value of the different $v_j(t)$s, so that the deformation does not change the Rampichini diagram. We also knot that the deformation lifts to a deformation of $g_t$. Both of these invariances imply that the braided open book has not been changed by this deformation.
Let $R$ be the Rampichini diagram (cf. Figure~\ref{fig:rampi_odd}a)) whose labels at $\varphi=2\pi-\varepsilon$ spell the word $B$. We can define a square diagram $R'$ (cf. the lower half of Figure~\ref{fig:rampi_odd}b)) by gluing a trivial square diagram with $n-1$ vertical curves as in Figure~\ref{fig:trivial_rampi}a) on the right edge of $R$ and continuing all curves on the right edge of $R$ towards the right edge of the new diagram $R'$, crossing below the curves of the trivial diagram. The new cactus at $t=0$ is given by $\tau_j(0)=(k(j)\to\ell(j))$, the original labels of $R$ for $j=1,2,\ldots,n-1$, and $\tau_{j+n-1}=(k(j)+n-1\ (\text{mod }2n-1)\to\ell(j)+n-1\ (\text{mod }2n-1))$ for $j=1,2,\ldots,n-1$. It is thus an odd cactus.

\begin{figure}[h]
\labellist
\Large
\pinlabel a) at  50 1780
\pinlabel b) at 1000 1780
\small
\pinlabel 0 at 140 190
\pinlabel 0 at 210 130
\pinlabel $2\pi$ at 850 130
\pinlabel $2\pi$ at 140 830
\pinlabel $(1,4)$ at 390 890
\pinlabel $(1,3)$ at 550 890
\pinlabel $(1,2)$ at 710 890
\pinlabel $(1,4)$ at 935 620
\pinlabel $(3,4)$ at 935 460
\pinlabel $(1,2)$ at 935 290
\pinlabel $(1,2)$ at 1840 340
\pinlabel $(3,4)$ at 1840 510
\pinlabel $(1,4)$ at 1840 660
\pinlabel $(1,4)$ at 1360 1550
\pinlabel $(1,3)$ at 1530 1550
\pinlabel $(1,2)$ at 1690 1550 
\pinlabel $(4,7)$ at 2010 1550
\pinlabel $(4,6)$ at 2190 1550
\pinlabel $(4,5)$ at 2350 1550
\pinlabel $(1,2)$ at 2580 315
\pinlabel $(3,7)$ at 2580 470
\pinlabel $(1,7)$ at 2580 630
\pinlabel $(5,6)$ at 1840 930
\pinlabel $(4,7)$ at 1840 1150
\pinlabel $(4,5)$ at 1840 1305
\pinlabel $(4,5)$ at 2580 950
\pinlabel $(6,7)$ at 2580 1120
\pinlabel $(4,7)$ at 2580 1280
\pinlabel 0 at 1130 190
\pinlabel 0 at 1185 130
\pinlabel $2\pi$ at 2500 130
\pinlabel $2\pi$ at 1100 1480
\pinlabel $\imath_n(B)m_n(B)$ at 2200 1760
\pinlabel $R'$ at 2900 500 
\Large
\pinlabel $t$ at 100 530
\pinlabel $\varphi$ at 490 100
\pinlabel $t$ at 1150 850
\pinlabel $\varphi$ at 1850 20
\endlabellist
\centering
\includegraphics[height=7.5cm]{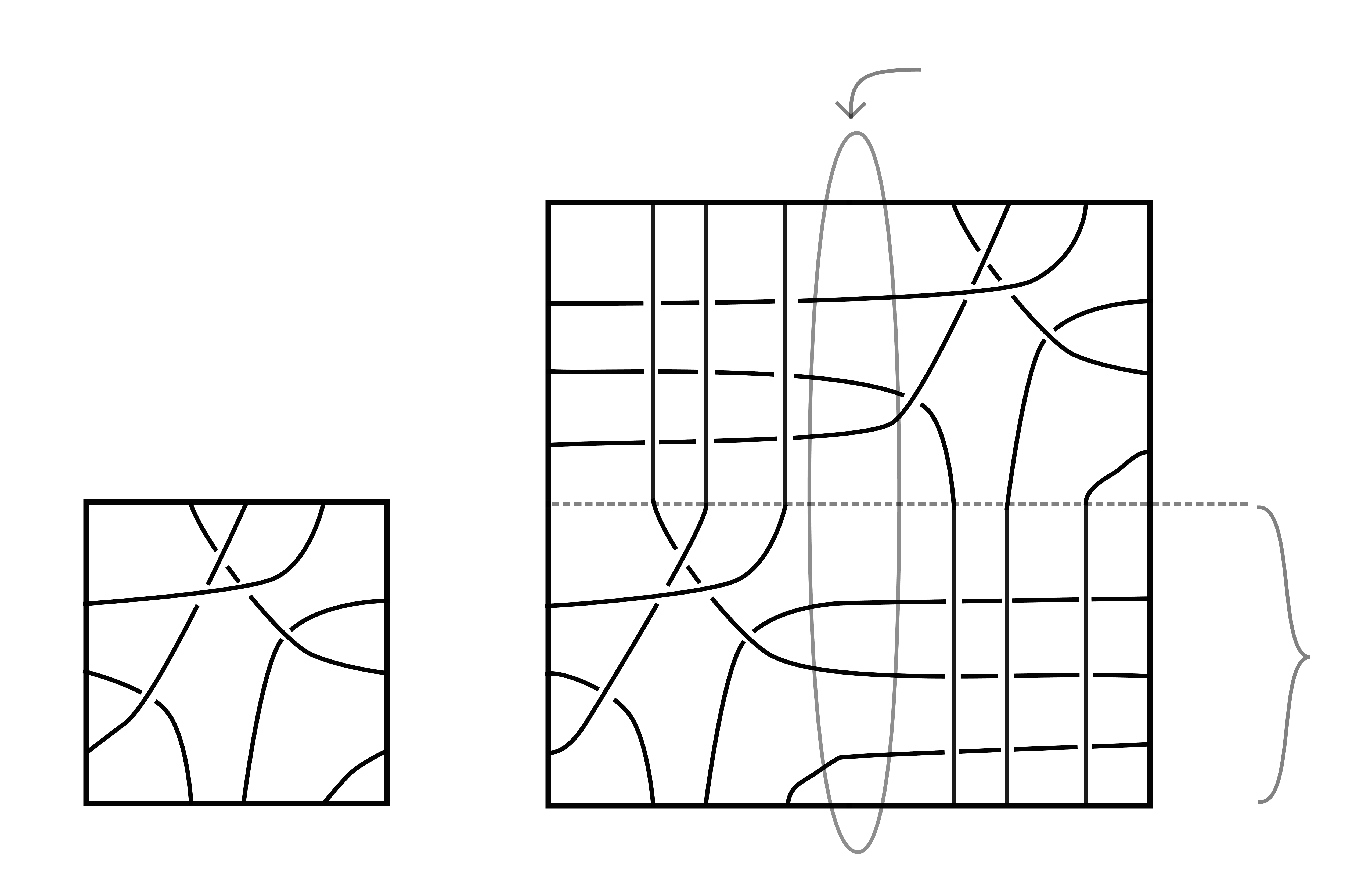}
\caption{a) A Rampichini diagram $R$. b) The corresponding Rampichini diagram $R''$ with odd symmetry. Its lower half is the Rampichini diagram $R'$. The labels at $\varphi=\pi$ (interpreted as band generators) read from the bottom to the top spell $\imath_n(B)m_n(B)$. \label{fig:rampi_odd}}
\end{figure}

The square diagram $R'$ corresponds to a loop in $V_n$ and we claim that this loop lifts to a loop in $X'_n$. By the proof of Theorem 5.6 in \cite{bode:braided} it is enough to check that the labels at the top edge of the square are the same as the labels at the bottom when we follow the rules of how labels change at crossings and at the right edge of the square. Since the vertical curves are the overcrossing strand in each crossing and since they never cross the right edge of the square, their labels do not change at all. 

We call the curves in $R$ and in $R'$ that are strictly monotone increasing \textit{positive curves} and the curves that are strictly monotone decreasing \textit{negative curves}. We have to show that shifting the indices of every label of a positive curve on the right edge of $R$ by $+1\text{ mod }n$ is equal to shifting the indices of the corresponding labels on the right edge of $R'$ by $+1\text{ mod }2n-1$. Likewise we have to show that the labels of the negative curves in $R'$ at $\varphi=\pi$ are equal to the corresponding labels on the right edge of $R$. This implies that the labels in the left half of $R'$ are equal to the corresponding labels in $R$.

If a label of a positive curve on the right edge of $R$ is $(i\to j)$, then the corresponding label on the right edge of $R'$ is given by the conjugate of $(i\to j)$ by $\underset{k=1}{\overset{n-1}{\prod}}\tau_{k+n-1}=(n\to 2n-1\to 2n-2\to \ldots\to n+2\to n+1)$. If both $i$ and $j$ are different from n, the result of this conjugation is equal to $(i\to j)$, since $i,j\notin\{n,n+1,n+2,\ldots,2n-1\}$. If one of them (say $i$) is equal to $n$, then we obtain $(2n-1\to j)$. This is precisely what we wanted, since 
\begin{equation}
(n+1\to j+1)\text{ mod }n=(1\to j+1)=(2n-1+1\to j+1)\text{ mod }2n-1.
\end{equation}

If a label of a negative curve on the left edge of $R$ is $(i\to j)$, then the corresponding label on the right edge of $R$ is $(i-1\to j-1)\text{ mod n}$, while it is $(i-1\to j-1)\text{ mod }2n-1$ on the right edge of $R'$. The label of the same curve in $R'$ at $\varphi=\pi$ after it has crossed all vertical curves is the conjugate of $(i-1\to j-1)\text{ mod }2n-1$ by $\underset{k=n-1}{\overset{1}{\prod}}\tau_{k+n-1}=(n\to n+1\to n+2\to \ldots\to 2n-1)$.  If both $i$ and $j$ are different from $1$, this is simply $(i-1\to j-1)\text{ mod }2n-1=(i-1\to j-1)\text{ mod }n$, which is the desired label. If one of them (say $j$) is equal to $1$, then we obtain $(n\to i-1)\text{ mod }2n-1=(n\to i-1)\text{ mod }n$. Thus in all cases the labels in the left half of $R'$ are the same as the corresponding labels in $R$. Since $R$ is a Rampichini diagram, its labels at $t=0$ are equal to its labels at $t=2\pi$ and thus $R'$ corresponds to a loop $g'_t$ in $X'_n$.

We may deform $R'$ and the corresponding loop of polynomials $g'_t$ in a neighbourhood of $t=0$ such that the basepoint $g'_0$ satisfies the property from Lemma~\ref{lem:trivialpath}: For every critical value $v'_j(0)$ of $g'_0$ there is a critical value $v'_k(0)$, $k\in\{1,2,\ldots,2n-2\}$, such that $\arg(v'_j(0))=\arg(v'_k(0))+\pi$.

Note that $R'$ was constructed in such a way that $g'_0$ has an odd cactus. By Lemma~\ref{lem:trivialpath} there is a path in $X'_{2n-1}$ from $g'_0$ to $\tilde{g'}_0$ (the monic polynomial satisfying $\tilde{g'}_0(u)=-g'_0(-u)$) with trivial square diagram. The square diagram that is the composition of $R'$ and this trivial square diagram can be interpreted as a square diagram of a path $\gamma_t$ from $g'_0$ to $\tilde{g'}_0$. 

For a path $\gamma_t$ in $X'_{2n-1}$ given by $\underset{j=1}{\overset{n}{\prod}}(u-z_j(t))$ we denote by $\tilde{\gamma}_t$ the path that is given by $\underset{j=1}{\overset{n}{\prod}}(u+z_j(t))$. If $\gamma_t$ is the path from $g'_0$ to $\tilde{g'}_0$ described above, we can associate to $\tilde{\gamma}_t$ a square diagram (cf. the top half of Figure~\ref{fig:rampi_odd}b)). Its labels at the bottom edge match its labels at the top edge by the same arguments as above.

We now compose $\gamma_t$ and $\tilde{\gamma}_t$, which is by construction an odd loop in $X'_{2n-1}$. Its corresponding square diagram $R''$ (cf. Figure~\ref{fig:rampi_odd}b)) is thus odd. It contains curves with vertical segments, but after a small isotopy that maintains oddness all curves are strictly monotone increasing or decreasing. Therefore, it is a Rampichini diagram.

We claim that the labels at $\varphi=\pi$ in this Rampichini diagram spell the word $\imath_n(B)m_{n}(B)$. We already know that the lower half of the diagram spells $\imath_n(B)$, since the lower left quadrant has the same labels as the original diagram $R$. By the same arguments, the upper right quadrant is the original diagram $R$ except that all labels on the right edge have been shifted by $n-1 \text{ mod }2n-1$. It follows that the labels in the upper half of $R''$ at $\varphi=\pi$ are the same as the labels on the left edge of $R$ shifted by $n \text{ mod }2n-1$, except if the corresponding label in $R$ was of the form $a_{i,n}$. In this case the label on the right edge in the upper right quadrant is $a_{i+n-1,2n-1}$. On the left edge of $R''$ it becomes $a_{1,i+n}$ and after conjugation by $\underset{j=1}{\overset{n-1}{\prod}}\tau_j=(1\to n\to n-1\to\ldots\to3\to2)$ we have $a_{n,i+n}$. Hence the labels in the top half at $\varphi=\pi$ spell $m_{n}(B)$.
\end{proof}

%Say a label of a positive curve on the right edge of $R$ is $(i\ j)$. Then the corresponding label on the right in the upper right quadrant of $R''$ is $(i+n\ j+n)\text{ mod }2n-1$ and we obtain the corresponding label at $\varphi=\pi$ by shifting it by $-1\text{ mod }2n-1$ and conjugating it by $\prod_{j=1}^{n-1}\tau_j=(1 \ 2\ 3\ \ldots\ n)$.

Note that if a knot $K$ is a closure of a P-fibered braid $B$ on $n$ strands, then the closure of $\imath_n(B)m_n(B)$ is the connected sum $K\# K$. Following the construction above we obtain an odd loop of polynomials for the P-fibered braid $\imath_n(B)m_n(B)$ on $2n-1$ strands. By Theorem~\ref{thm:weighted} we get a semiholomorphic polynomial with an isolated singularity at the origin and $K\# K$ as the link of that singularity. This proves the P-fibered/semiholomorphic version of a result by Looijenga \cite{looijenga}.

\begin{theorem}
Let $K$ be a knot that is the closure of a P-fibered braid on $n$ strands. Then $K\# K$ is real algebraic and the corresponding real polynomial can be taken to be semiholomorphic and of degree $2n-1$ with respect to the complex variable $u$.
\end{theorem}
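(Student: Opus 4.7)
The plan is to feed the odd loop of polynomials furnished by the preceding lemma into Theorem~\ref{thm:weighted}. First, I would verify that the closure of $\imath_n(B)m_n(B)$ is the connected sum $K\#K$, where $B$ is any P-fibered braid representative of $K$. The homomorphism $\imath_n$ places $B$ on the first $n$ of $2n-1$ strands, while $m_n$ places a copy of $B$ on strands $n,n+1,\ldots,2n-1$, with the $n$-th strand of the ambient braid shared between the two factors and no other strand appearing in both. Because the two sub-braids are disjoint apart from this single shared strand, the closure of the product decomposes as the band-connect sum of the two individual closures, which is the standard connected sum $K\#K$.

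Second, by the preceding lemma the P-fibered braid $\imath_n(B)m_n(B)$ admits an odd Rampichini diagram, and hence (by Definition~\ref{def:odd}) a corresponding odd loop of polynomials $g_t\in X'_{2n-1}$ satisfying $g_{t+\pi}(u)=-g_t(-u)$. Since $g_t$ has odd degree $2n-1$ in $u$, each coefficient of $u^j$ is an even or an odd $2\pi$-periodic function according to the parity of $j$. By the density of even (respectively odd) trigonometric polynomials in the space of even (respectively odd) $2\pi$-periodic functions in every $C^k$-norm, I can approximate $g_t$ arbitrarily closely by an odd loop of polynomials whose coefficients are finite Fourier series. A sufficiently small approximation preserves both the P-fibered property and the braid isotopy class of $\imath_n(B)m_n(B)$, as explained in the remark following Theorem~\ref{thm:weighted}.

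Finally, since $2n-1$ is odd and the approximating $g_t$ has finite Fourier series coefficients and odd symmetry, Theorem~\ref{thm:weighted} applies directly. Choosing a sufficiently large odd integer $k$, the formula
\begin{equation*}
f(u,r\rme^{\rmi t})=r^{k(2n-1)}\,g\!\left(\tfrac{u}{r^{k}},\,\rme^{\rmi t}\right)
\end{equation*}
defines a semiholomorphic polynomial in $u,v,\overline{v}$ of degree $2n-1$ in $u$, with an isolated singularity at the origin, whose link is the closure of $\imath_n(B)m_n(B)$, namely $K\#K$.

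All the substantive analytic content is already packaged in the preceding lemma and in Theorem~\ref{thm:weighted}; the only new ingredients are the purely combinatorial identification of the closure of $\imath_n(B)m_n(B)$ with $K\#K$ and the routine trigonometric approximation step, so I do not anticipate a genuine obstacle. The one point that deserves a careful sentence or two is the connected-sum identification, since the cases where the label of a band generator on the right edge of the original Rampichini diagram meets the shared strand $n$ require a brief check of how $m_n$ acts on $a_{i,n}$; this is exactly the conjugation book-keeping already performed inside the proof of the preceding lemma, so no additional difficulty arises.
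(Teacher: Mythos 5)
Your proposal is correct and follows essentially the same route as the paper: identify the closure of $\imath_n(B)m_n(B)$ with $K\#K$, invoke the preceding lemma to obtain an odd loop of polynomials of degree $2n-1$, approximate by odd finite Fourier series, and apply Theorem~\ref{thm:weighted}. No gaps.
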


\section{Trigonometric interpolation and approximation}\label{sec:trig}
%In order to construct polynomials with isolated singularities we start with an odd, pure Rampichini diagram $R$. We want to show that inserting inner loops into $R$ results in a real algebraic link.
The goal of this section is to prove that any singular Rampichini diagram that is obtained from a pure Rampichini diagram by inserting circles denoting singularities can be realised by a loop of polynomials $h_t$, whose coefficients are polynomials in $\rme^{\rmi t}$ and $\rme^{-\rmi t}$.

%we can construct an odd loop of polynomials $h_t$, whose roots form a certain singular braid $B_{sing}$. The classical crossings of $B_{sing}$ correspond to a braid word associated with $R$, while its singular crossings occur at values of $t\in[0,2\pi]$ at which we want to insert inner loops and at the corresponding values $t+\pi$. Furthermore, $h_t$ should have coefficients that are polynomials in $\rme^{\rmi t}$ and $\rme^{-\rmi t}$.

The following result follows immediately from a theorem by Deutsch \cite{deutsch}, which is a generalization of work by Walsh \cite{walsh}.
%\begin{theorem}[Deutsch]\label{thm:deutsch}
%Let $Y$ be 
%\end{theorem}
\begin{lemma}\label{lem:appint}
Let $f:\mathbb{R}\to\mathbb{R}$ be a $2\pi$-periodic $C^k$-function. Then for any chosen set of points $z_j$, $j=1,2,\ldots,M$, and any $\varepsilon>0$, there is a trigonometric polynomial $f_{trig}$ with $f_{trig}^{(i)}(z_j)=f^{(i)}(z_j)$ for all $j=1,2,\ldots,M$, $i=1,2,\ldots,k$, and $|f-f_{trig}|_k<\varepsilon$.
\end{lemma}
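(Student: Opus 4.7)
The plan is to combine a global $C^k$-approximation of $f$ by a trigonometric polynomial with a small finite-dimensional correction enforcing the interpolation conditions at the $z_j$. Concretely, I would first fix a trigonometric polynomial $p$ with $|f-p|_k < \varepsilon/2$ and then add a trigonometric polynomial $q$ of small $C^k$-norm that matches the residual values $f^{(i)}(z_j)-p^{(i)}(z_j)$ at every $z_j$ for every $i\leq k$. The final interpolant is $f_{trig} \defeq p + q$.

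For the approximation step, I invoke the classical $C^k$-density of trigonometric polynomials in the space of $2\pi$-periodic $C^k$-functions. A clean way is to convolve $f$ with a Jackson-type kernel (or apply de la Vallée Poussin means); since $f$ is $C^k$, one gets a sequence of trigonometric polynomials converging to $f$ in the $C^k$-norm, so a suitable $p$ exists.

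For the correction step, I choose $N$ so large that the linear \emph{jet-evaluation map}
\[
\Phi\colon T_N \to \mathbb{R}^{M(k+1)}, \qquad \Phi(g)=\bigl(g^{(i)}(z_j)\bigr)_{j\leq M,\ i\leq k},
\]
is surjective on the finite-dimensional space $T_N$ of trigonometric polynomials of degree at most $N$. Surjectivity holds once $\dim T_N = 2N+1$ is large enough: the Wronskian of the basis $1,\cos t,\sin t,\cos 2t,\sin 2t,\dots$ is nowhere zero, so trigonometric polynomials separate jets of any finite order at any finite set of points, and a dimension count together with a right-inverse from linear algebra yields a bounded linear right-inverse $\Psi\colon \mathbb{R}^{M(k+1)} \to (T_N,|\cdot|_k)$ with some finite operator norm $C$. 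Setting
\[
q \defeq \Psi\bigl((f^{(i)}(z_j)-p^{(i)}(z_j))_{j,i}\bigr),
\]
one obtains $|q|_k \leq C\, C'\, |f-p|_k$ for a fixed constant $C'$ (converting the sup-norm of the residual jet vector into its Euclidean norm). Choosing $|f-p|_k$ smaller than $\varepsilon/(2CC')$ at the outset gives $|q|_k<\varepsilon/2$, hence $|f-f_{trig}|_k<\varepsilon$, while by construction $f_{trig}^{(i)}(z_j) = f^{(i)}(z_j)$ for all admissible $i,j$.

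The main obstacle is the bookkeeping in the correction step: one must verify that the jet-evaluation map is surjective for large $N$ and extract a bounded right-inverse with an $N$-independent estimate in the relevant sense. A more hands-on alternative, which is perhaps what Deutsch's argument effectively does, is to construct explicit localized trigonometric ``bumps'' $B_{j,i}\in T_N$ with $B_{j,i}^{(i')}(z_{j'})=\delta_{jj'}\delta_{ii'}$ (say, starting from high powers of $(1+\cos((t-z_j)/2))$, modulated by low-degree correctors that kill the unwanted derivatives at the other $z_{j'}$, and then solving a small linear system to diagonalize). Once such a dual basis is in hand, $q=\sum_{j,i}(f^{(i)}(z_j)-p^{(i)}(z_j))B_{j,i}$ directly realises the correction with $|q|_k$ bounded by a fixed multiple of $|f-p|_k$, completing the proof.
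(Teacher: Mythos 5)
Your proof is correct and is essentially the paper's argument with the black box opened: the paper checks that the jet-evaluation functionals $L_{i,j}(g)=g^{(i)}(z_j)$ are continuous for $|\cdot|_k$ and that trigonometric polynomials are $C^k$-dense, then cites Deutsch's Theorem 0 on simultaneous interpolation and approximation, whose proof is exactly your approximate-then-correct scheme using a finite dual system inside the dense subspace. Your explicit correction step is sound as written (once $N$ is fixed large enough for surjectivity of the jet map on $T_N$, any linear right-inverse is automatically bounded, so the $N$-uniformity you worry about is not actually needed).
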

\begin{proof}
The proof is completely analogous to that of Corollary 3.2 in \cite{deutsch}. For every $j=1,2,\ldots,M$ and $i=1,2,\ldots,k$ we can define the operator $L_{i,j}(g)=g^{(i)}(z_j)$ on the space of $2\pi$-periodic $C^k$-functions, which evaluates the $i$th derivative of a function at the point $z_j$. Then every $L_{i,j}$ is a continuous linear operator, since
\begin{equation}
|L_{i,j}(g)|=|g^{(i)}(z_j)|\leq \underset{x\in\mathbb{R}}{\sup}|g^{(i)}(x)|\leq \sum_{i=1}^k\underset{x\in\mathbb{R}}{\sup}|g^{(i)}(x)|=|g|_k.
\end{equation} 
The lemma then follows from Theorem 0 in \cite{deutsch}, since trigonometric polynomials are dense in the space of $2\pi$-periodic functions with the $C^k$-norm.
\end{proof}

Applying this lemma to the real and imaginary part of a $2\pi$-periodic $C^k$-function $f:\mathbb{R}\to\mathbb{C}$ results in the analogous result for finite Fourier series.

For a power series $f=\underset{j=0}{\overset{\infty}{\sum}}a_j t^{j}$ we call the smallest index $j$ with $a_j\neq 0$ the \textit{lowest order} of $f$.

\begin{lemma}\label{lem:cons}
Let $B$ be a P-fibered geometric braid on $n$ strands with a pure Rampichini diagram $R$. Let $v_j(t)$, $j=1,2,\ldots,n-1$, denote the critical values of the corresponding loop of polynomials. Let $\{(t_i,j(i))\}_{i=1,2,\ldots,M}$, $t_i\in[0,2\pi]$, $t_i\neq t_j$ if $i\neq j$, $j(i)\in\{1,2,\ldots,n-1\}$, be such that for every $i\in\{1,2,\ldots,M\}$ we have $\arg(v_{j(i)}(t_i))\neq\arg(v_k(t_i))$ for all $k\neq j(i)$. Then there exists a loop $h_t$ in the space of monic polynomials with fixed degree $n$ such that all of its coefficients are finite Fourier series and $h_t$ has an associated singular Rampichini diagram given by $R_{\{(t_i,j(i))\}_{i=1,2,\ldots,M}}$.

\end{lemma}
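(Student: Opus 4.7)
The plan is to first appeal to the lifting correspondence between (singular) Rampichini diagrams and loops of monic polynomials to produce a smooth loop $\tilde{g}_t$ realising $R_{\{(t_i,j(i))\}}$, and then to approximate $\tilde{g}_t$ by a loop $h_t$ whose coefficients are finite Fourier series using Lemma~\ref{lem:appint}, interpolating values and sufficiently many derivatives at each singular time $t_i$.

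For the construction of $\tilde{g}_t$: since $R$ is pure, the critical values $v_j(t)$ of the polynomial loop corresponding to $B$ lift to globally defined continuous curves in $\mathbb{C}\setminus\{0\}$. I would deform only the curve $v_{j(i)}(t)$ in an arbitrarily small neighbourhood $(t_i-\delta_i,t_i+\delta_i)$ of each singular time so that it passes transversally through $0$ at $t=t_i$, keeping the other $v_k(t)$ fixed; the hypothesis $\arg(v_{j(i)}(t_i))\neq\arg(v_k(t_i))$ for $k\neq j(i)$ together with continuity allows this deformation to be carried out without introducing spurious coincidences of arguments on either side of $t_i$. By the lifting theory recalled at the end of Section~\ref{sec:back}, this deformation of the critical-value configuration lifts to a smooth $2\pi$-periodic loop $\tilde{g}_t$ of monic degree-$n$ polynomials whose associated singular Rampichini diagram is exactly $R_{\{(t_i,j(i))\}}$.

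For the approximation: write $\tilde{g}_t(u)=u^n+\sum_{\ell=0}^{n-1}a_\ell(t)u^\ell$ with each $a_\ell:\mathbb{R}/2\pi\mathbb{Z}\to\mathbb{C}$ smooth. Fix a sufficiently large integer $k$ and a small $\varepsilon>0$. Applying the complex version of Lemma~\ref{lem:appint} to the real and imaginary parts of each $a_\ell$ yields finite Fourier series $a_\ell^{\mathrm{trig}}$ satisfying $(a_\ell^{\mathrm{trig}})^{(s)}(t_i)=a_\ell^{(s)}(t_i)$ for all $i=1,\ldots,M$ and all $s=0,1,\ldots,k$, together with $|a_\ell-a_\ell^{\mathrm{trig}}|_k<\varepsilon$. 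Setting $h_t(u)\defeq u^n+\sum_{\ell=0}^{n-1}a_\ell^{\mathrm{trig}}(t)u^\ell$, the interpolation of values guarantees $h_{t_i}=\tilde{g}_{t_i}$ as polynomials in $u$, preserving the double root exactly; and away from neighbourhoods of the $t_i$, the defining conditions of a (non-singular) Rampichini diagram, namely simple nonzero critical values with pairwise distinct and strictly monotone arguments, are open in the $C^k$-topology and are therefore inherited by $h_t$ once $\varepsilon$ is small enough.

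The main obstacle is controlling $v_{j(i)}^h(t)$ near $t=t_i$: its argument and the derivative $\partial_t\arg$ are defined only as one-sided limits and are governed by the first few Taylor coefficients of $v_{j(i)}^{\tilde g}(t)$ at $t_i$. Matching the first $k$ time-derivatives of each coefficient $a_\ell$ at $t_i$ forces the Taylor expansion of $v_{j(i)}^h(t)$ about $t_i$ to agree with that of $v_{j(i)}^{\tilde g}(t)$ up to order $k$, which, for $k\geq 2$, pins down $\lim_{t\to t_i^{\pm}}\arg(v_{j(i)}^h(t))$ and $\lim_{t\to t_i^{\pm}}\partial_t\arg(v_{j(i)}^h(t))$ to agree with those of $\tilde{g}_t$. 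This is exactly the extra power that Lemma~\ref{lem:appint} provides over a plain $C^k$-approximation, and with it the singular Rampichini diagram of $h_t$ is $R_{\{(t_i,j(i))\}}$, as required.
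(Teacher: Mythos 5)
Your second step (approximate the coefficients by finite Fourier series via Lemma~\ref{lem:appint} while interpolating high-order jets at the $t_i$, then compare lowest-order terms of the perturbed critical value against $|v_{j(i)}(t)|$ near $t_i$ and use compactness elsewhere) is sound in spirit and close to the paper's own use of simultaneous interpolation and approximation; note only that the openness claim "inherited once $\varepsilon$ is small enough" fails in any punctured neighbourhood of $t_i$, where $|v_{j(i)}(t)|\to 0$, so the Taylor-matching estimate you sketch is not an optional refinement but the entire content of that step and must be carried out quantitatively (the paper's induction does exactly this). The real problem is your first step. You deform $v_{j(i)}(t)$ so that it \emph{passes transversally through} $0$ at $t=t_i$. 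If $v_{j(i)}(t)=c_1(t-t_i)+O((t-t_i)^2)$ with $c_1\neq 0$, then the two one-sided limits of $\arg(v_{j(i)}(t))$ at $t_i$ differ by $\pi$, so $\lim_{t\to t_i}\arg(v_{j(i)}(t))$ does not exist. This violates the explicit condition, stated at the end of Section~\ref{sec:back}, under which a loop of polynomials has an associated singular Rampichini diagram, and the resulting square diagram is not $R_{\{(t_i,j(i))\}}$: the curve would jump horizontally by $\pi$ at $t_i$ instead of continuing monotonically through the inserted circle, and since your deformation is supported in a tiny interval and must reconnect with the undeformed curve at its endpoints, the argument would have to swing back by $\pi$ inside that interval, destroying strict monotonicity. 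What is needed is that $|v_{j(i)}(t)|$ vanish to \emph{even} order at $t_i$ while its argument stays continuous and strictly monotone; the paper arranges precisely this by multiplying the critical value by $1+F_j(t)$, where $F_j$ interpolates $-1$ with vanishing first derivative at $t_i$, so that $|w_{j,n-1}|$ vanishes to order exactly $2$ --- a normalization that is then used essentially in Lemma~\ref{lem:ranal} and throughout Section~\ref{sec:deform}.

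Separately, your appeal to "the lifting theory recalled at the end of Section~\ref{sec:back}" does not cover the path you want to lift: the results quoted there concern loops in $\tilde{V}_n$ and $V_n$, whose points are configurations of \emph{nonzero} critical values, whereas at each $t_i$ your configuration leaves this space and the would-be lift $\tilde{g}_{t_i}$ leaves $X_n$ altogether (a vanishing critical value means a multiple root). One can argue that the relevant covering is really over configurations of \emph{distinct} critical values, so that crossing the zero locus is harmless, and that the lifted loop still closes up by the homotopy lifting property; but this is an additional argument you would have to supply, and it is exactly the difficulty that the paper's construction --- which never lifts a critical-value path through zero, but instead produces the degeneration explicitly by adding the terms $A_j(t)=F_j(t)w_{j,j-1}(t)$ to the constant term and controlling the side effects on all other critical values by induction --- is designed to avoid. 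As written, the proposal therefore has a genuine gap at its first step, even though its overall two-stage architecture could in principle be repaired.
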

\begin{proof}
Since $B$ is a P-fibered geometric braid, we know that the critical values $v_j(t)$ of the corresponding loop of polynomials satisfy $\tfrac{\partial \arg(v_j(t))}{\partial t}\neq 0$ for all $t\in[0,2\pi]$. This property does not change under small $C^1$-deformations of $v_j(t)$, i.e., there is an $\varepsilon_0$ such that for all positive $\varepsilon<\varepsilon_0$ and any smooth function $F:[0,2\pi]\to\mathbb{C}$ with $|F|_1<\varepsilon$ we have $v_j(t)+F(f)\neq 0$ and $\tfrac{\partial\arg(v_j(t)+F(t))}{\partial t}\neq 0$ for all $t$. This can be seen from $\arg(v_j(t)+F(t)))=\text{Im}(\text{Log}(v_j(t)+F(t))))$ and thus
\begin{equation}\label{eq:darg}
\frac{\partial \arg(v_j+F))}{\partial t}=\frac{\text{Re}(v_j+F)\tfrac{\partial\text{Im}(v_j+F)}{\partial t}-\text{Im}(v_j+F)\tfrac{\partial\text{Re}(v_j+F)}{\partial t}}{(\text{Re}(v_j+F))^2+(\text{Im}(v_j+F))^2}.
\end{equation}

Let $\varepsilon<\varepsilon_0$ be such a small positive number. In particular, the above implies that $|v_j(t)|>\varepsilon$ for all $j$ and $t$. Pick a small number $\delta>0$ and define $V_i$ as the open interval $(t_i-\delta,t_i+\delta)$. In particular, $\delta$ should be chosen such that in every $V_i$ the argument of $v_{j(i)}(t)$ differs from the argument of any other critical value. Let $f_j:[0,2\pi]\to\mathbb{R}$ be the smooth function defined to be $-\rme^{-\left(\tfrac{t-t_i}{(t-t_i)^2-\delta^2}\right)^2}$ on all $V_i$ with $j(i)=j$ and everywhere else constant 0. 

We may deform the critical values $v_j(t)$ without changing $\arg v_j(t)$ such that $|v_{j(i)}(t)|$ becomes small in $V_i$ for all $i$. More precisely, we want that $|v_{j(i)}(t_i)|$ is a global minimum of $|v_{j(i)}(t)|$ (and in particular $|v_{j(i)}(t_i)|=|v_{j(i')}(t_{i'})|$ for all $i,i'$ with $j(i)=j(i')$) and $|f_jv_{j(i)}(t)|_1<\varepsilon/n$ for all $t$ in any $V_i$ with $j(i)=j$. Note that $\varepsilon$ is the same as above and does not depend on $i$. This deformation of the critical values can be performed without changing $v_j(t)$ in any $V_i$ with $j(i)\neq j$.

%Outside neighbourhoods of the $t_i$s we may leave $|v_{j(i)}(t)|$ unchanged. 
This deformation lifts to a homotopy of the loop of polynomials with distinct roots and fixed degree \cite{bode:braided}. We can now approximate the critical points $c_j(t)$, $j=1,2,\ldots,n-1$, of the resulting polynomials $g_t$ (the endpoint of the homotopy) and its constant term $a_0(t)$ by finite Fourier series using the $C^2$-norm, while interpolating the values of $c_j(t)$, $a_0(t)$ and their first and second derivatives at $t=t_i$, $i=1,2,\ldots,M$. (It is at this point that we use the fact that $R$ is pure. Since $v_j(2\pi)=v_j(0)$, we have $c_j(2\pi)=c_j(0)$, which is necessary for a finite Fourier series.) This approximation and simultaneous interpolation exists by Lemma~\ref{lem:appint}. By choosing the approximation close enough to the original $c_j(t)$ and $a_0(t)$ we can guarantee that there are still no argument-critical points of $g_t$. Since all critical values are smooth (in fact, analytic) functions of the critical points and the constant term, we can arrange that the critical valuues $w_j(t)$, $j=1,2,\ldots,n-1$, of $g_t$ satisfy $|w_j(t)-v_j(t)|_1<\varepsilon/n$ on all $V_i$ with $j(i)\neq j$. Furthermore, since all critical points and the constant term are parametrised by finite Fourier series, every coefficient of $g_t$ is a finite Fourier series. It follows that the critical values $w_j(t)$, $j=1,2,\ldots,n-1$, of $g_t$ are also parametrised by finite Fourier series and are non-zero. Moreover, the interpolation guarantees that we still have $|w_{j(i)}(t_i)|$ as a global minimum of $|w_{j(i)}(t)|$ and choosing the approximation sufficiently close gives $|f_jw_{j}(t)|_1<\varepsilon/n$. Note that this inequality is automatically satisfied outside of the $V_i$s with $j(i)=j$, where $f_j\equiv 0$.

%The analogue of Eq.~\eqref{eq:darg} for $w_j+F$ implies that $w_j+F$ is non-zero and does not have any argument-critical points for any $F:[0,2\pi]\to\mathbb{C}$ with $|F|_1<\varepsilon'$ for any sufficiently small $\varepsilon'$. Let $\varepsilon'$ such a small positive number and we assume that $\varepsilon'<\varepsilon$.

We claim that we can obtain the desired loop of polynomials $h_t$ by only changing the constant term of $g_t$. More specifically, we are going to add $n-1$ finite Fourier series $A_1(t)$, $A_2(t),\ldots,A_{n-1}(t)$ to its constant term. These functions are defined successively as follows. We perform a simultaneous trigonometric interpolation and approximation of the function $f_j$ and its first $N$ derivatives, where $N$ is some sufficiently large natural number. The resulting interpolating trigonometric polynomial $F_j$ should agree with the values of $f_j$ and its first $N$ derivatives on all $t=t_i$. Then we define $A_j(t)=F_j(t)w_{j,j-1}(t)$, where $w_{k,j}(t)=w_k(t)+\underset{i=1}{\overset{j}{\sum}}A_i(t)$. Note that adding a term $A_j(t)$ to the constant term of a loop of polynomials shifts all critical values by $A_j(t)$, so that $w_{k,j}(t)$, $k=1,2,\ldots,n-1$, are the critical values of $g_t+\underset{i=1}{\overset{j}{\sum}}A_i(t)$. We prove the following claim by induction on $j$.

\textit{Claim:} If the approximation of $f_j$ by $F_j$ is sufficiently close for all $j=1,2,\ldots,n-1$, then the following properties hold for all $j=1,2,\ldots,n-1$:
\begin{itemize}
\item $w_{k,j}(t)=0$ if and only if $k\leq j$ and $t=t_i$ for some $i$ with $j(i)=k$.
\item $\arg(w_{k,j}(t))$ and $\tfrac{\partial\arg(w_{k,j}(t))}{\partial t}$ have well-defined limits as $t$ goes to $t_i$ for all $i$ with $j(i)=k$, with the latter limit being non-zero.
\item The critical values $w_{k,j}(t)$, $k=1,2,\ldots,n-1$, have no argument-critical points, wherever $w_{k,j}(t)\neq 0$.
\item For all $k$ we have $|w_{k,j}-v_k|_1<(j+1)\varepsilon/n$ on all $V_i$ with $j(i)\neq k$.
\item For all $k>j$ we have $|f_kw_{k,j}|_1<\varepsilon/n$ on all $V_i$.
\item $|w_{k,j}(t_i)|$ is a global minimum of $|w_{k,j}(t)|$ for all $i$ with $j(i)=k$.
\end{itemize}

Before we go into the proof of the claim, note that each $A_j$ is indeed a finite Fourier series, since $F_j$ is a trigonometric polynomial and $w_j$ is a finite Fourier series for all $j=1,2,\ldots,n-1$.

\textbf{Base case: $j=1$:}\\
First we focus on $w_{1,1}$. The function $A_1(t)$ was constructed such that $w_{1,1}(t)=0$ if and only if $t=t_i$ and $j(i)=1$. This can be seen as follows. There is a neighbourhood of $t=t_i$ with $j(i)=1$ such that in that neighbourhood any sufficiently close approximation of $f_1$ in the $C^2$-norm has positive second derivative. It follows that $t=t_i$ is the unique point where $F_1$ takes the value $-1$ in that neighbourhood and thus $t=t_i$ is the only zero of $w_{1,1}$ in that neighbourhood. We can guarantee that there are no zeros of $w_{1,1}$ outside of these neighbourhoods, since their complement in $[0,2\pi]$ is compact and we may choose the approximation arbitrarily close.

Furthermore, $\tfrac{\partial\arg(w_{1,1}(t))}{\partial t}(t)\neq 0$ whenever $w_{1,1}\neq 0$, since in this case $\arg w_{1,1}(t)=\arg w_1(t)$. This equality also implies that $\arg(w_{1,1}(t))$ and $\tfrac{\partial\arg(w_{1,1}(t))}{\partial t}$ have well-defined limits as $t$ goes to $t_i$ with $j(i)=1$, which are equal to $\arg(w_1(t_i))$ $\tfrac{\partial\arg(w_1)}{\partial t}(t_i)\neq 0$, respectively.

We know that $w_1(t)$ differs from $v_1(t)$ on $V_i$ with $j(i)\neq 1$ by at most $\varepsilon/n$ in the $C^1$-norm. Since $f_1\equiv 0$ on every such $V_i$, it follows that if $F_1$ is a sufficiently close approximation of $f_1$, it satisfies $|F_1(t)w_1(t)|_1<\varepsilon/n$ on all such $V_i$. We thus obtain
\begin{equation}
|w_{1,1}(t)-v_1(t)|_1\leq|F_1(t)w_1(t)|_1+|w_1(t)-v_1(t)|_1<2\varepsilon/n
\end{equation} 
on all $V_i$ with $j(i)\neq 1$.

Now we turn our attention to $w_{k,1}$ with $k>1$. We want to show that for any $k\neq 1$ the new critical values $w_{k,1}(t)$ are non-zero and satisfy $\tfrac{\partial w_{k,1}}{\partial t}(t)\neq 0$. We know that $|f_1w_1|_1<\varepsilon/n$ on all $V_i$ (on those with $j(i)=1$ by construction, on the others because $f_1\equiv 0$). It follows that if $F_1$ is a sufficiently close approximation of $f_1$, it satisfies $|F_1(t)w_1(t)|_1<\varepsilon/n$ on all $V_i$. Recall that in the $V_i$s with $j(i)\neq k$, the critical value $w_k(t)$ with $k\neq 1$ differs from the original $v_k(t)$ by at most $\varepsilon/n$ in the $C^1$-norm, so that 
\begin{equation}
|w_{k,1}(t)-v_k(t)|_1\leq|F_1(t)w_1(t)|_1+|w_k(t)-v_k(t)|_1<2\varepsilon/n.
\end{equation} 
Thus $w_{k,1}(t)$ is non-zero and has no argument-critical points in $V_i$ for all $i$ with $j(i)\neq k$.

Outside of the $V_i$s with $j(i)\neq k$ the same is true if $|F_1w_1|_1$ is sufficiently small on $[0,2\pi]\backslash(\underset{\underset{j(i)\neq k}{i\text{ s.t.}}}{\cup}V_i)$, which can be arranged by choosing $F_1$ sufficiently close to $f_1$ on the compact set $[0,2\pi]\backslash(\underset{\underset{j(i)\neq k}{i\text{ s.t.}}}{\cup}V_i)$. 

Obviously, $\arg(w_{k,1}(t))$ and $\tfrac{\partial\arg(w_{k,1}(t))}{\partial t}$ have well-defined limits as $t$ approaches $t_i$ with $j(i)=k>1$, since $w_{k,1}$ with $k>1$ has no zeros. Thus its argument and its derivative are well-defined at $t=t_i$. Since there are no argument-critical points, this value of the derivative is non-zero. We have thus proved the first four items of the claim for $j=1$.

Since for all $i$ with $j(i)\neq 1$ we have $f_1\equiv 0$ on all $V_i$, any sufficiently close approximation $F_1$ satisfies $|f_k w_{k,1}|_1=|f_k (w_k+F_1 w_1)|_1<\varepsilon/n$ on $V_i$, $k>1$. Here we have used that $|f_kw_k|_1<\varepsilon/n$ on $V_i$ with $j(i)=k$ and $f_k\equiv 0$ everywhere else. 

%Then any sufficiently close approximation $F_k$ of $f_k$ satisfies the fifth property $|F_k w_{k,1}|_1= |F_k (w_k+F_1w_1)|_1<\varepsilon/n$ on all $V_i$.

The sixth item is obviously true for $k=1$, since $w_{1,1}(t)=0$ if and only if $t=t_i$ by the first item. For $k>1$ note that by the interpolation property of $F_1$ we have 
\begin{equation}
F_1(t_i)=F_1^{(1)}(t_i)=F_1^{(2)}(t_i)=\ldots=F_1^{(N)}(t_i)=0
\end{equation} 
for all $i$ with $j(i)>1$. Since $w_k(t)$ is given by a non-vanishing finite Fourier series, it is real-analytic and $|w_k(t)|$ is real-analytic, too. We want the natural number $N$, which determines to which order $F_1$ interpolates the derivatives of $f_1$ at $t=t_i$, $j(i)>1$, to be larger than the lowest order of the power series of $|w_k(t)|-|w_k(t_i)|$ centered at $t=t_i$. (We are omitting the fact that this lowest order could depend on $i$ and $k$. We simply choose $N$ such that it is sufficient for all $i$ and $k$.) Since $|w_k(t_i)|$ is a local minimum, this lowest order has a positive coefficient and by the interpolation property $|w_{k,1}(t)|-|w_{k,1}(t_i)|$ has the same lowest order term. It follows that $|w_{k,1}(t_i)|$ is a local minimum and the only local minimum in a neighbourhood. On the compact complement of these neighbourhoods in $[0,2\pi]$ we can guarantee that $|w_{k,1}(t)|>|w_{k,1}(t_i)|$ by choosing a sufficiently close approximation $F_1$ of $f_1$. On $[0,2\pi]\backslash\left(\underset{\underset{j(i)=1}{i\text{ s.t. }}}{\cup}V_i\right)$, where $f_1\equiv 0$, this is obvious. On $V_i$ with $j(i)=1$ we may use that $|w_k(t)|=|v_k(t)|>\varepsilon$, so that 
\begin{align}
|w_{k,1}(t)|&=|w_k(t)+A_1(t)|\geq ||w_k(t)|-|A_{1}(t)||\nonumber\\
&>\varepsilon-\varepsilon/n>\varepsilon/n>|f_kw_k|_1\geq|f_k(t_i)w_k(t_i)|\nonumber\\
&= |w_{k,1}(t_i)|.
\end{align}
Here we have used that $|A_1(t)|=|F_1(t)w_1(t)|<\varepsilon/n$. Thus $|w_{k,1}(t_i)|$, $j(i)=k$, are global minima of $|w_{k,1}(t)|$ for all $k$. This concludes the proof of the six items in the claim for the base case of $j=1$.

\textbf{Induction:}
We assume that the claim holds for some $j-1\in\{1,2,\ldots,n-2\}$ and want to show that it then also holds for $j$.

By the same arguments as in the base case we have $w_{j,j}(t)=0$ if and only if $t=t_i$ for $i$ with $j(i)=j$. Away from these points we have $\arg(w_{j,j}(t))=\arg(w_{j,j-1}(t))$, so that there are no argument-critical points of $w_{j,j}$ and $\arg(w_{j,j}(t))$ and $\tfrac{\partial\arg(w_{j,j}(t))}{\partial t}$ have well-defined limits, equal to $\arg(w_{j,j-1}(t_i))$ and $\tfrac{\partial\arg(w_{j,j-1})}{\partial t}(t_i)$, as $t$ goes to $t_i$ with $j(i)=j$.
%Furthermore, the interpolation property of $F_1$ at $t=t_i$ for $i$ with $j(i)\neq 1$ guarantees that $|w_{j(i),1}(t_i)|=\varepsilon$ for all $i$ with $j(i)\neq 1$ and it is a global minimum of $|w_{j(i),1}(t)|$.

For $w_{k,j}$ consider that by the induction hypothesis $|w_{k,j-1}-v_k|_1<j\varepsilon/n$ on all $V_i$ with $j(i)\neq k$. The induction hypothesis also states that $|f_j w_{j,j-1}|_1<\varepsilon/n$ on all $V_i$. This means that on all $V_i$ with $j(i)\neq k$ we have
\begin{equation}
|w_{k,j}-v_k|_1\leq|F_jw_{j,j-1}|_1+|w_{k,j-1}-v_k|_1<(j+1)\varepsilon/n
\end{equation}
if $F_j$ is sufficiently close to $f_j$. This implies that $w_{k,j}(t)$ is non-zero and has no argument-critical points in $V_i$, $j(i)\neq k$. Since for $k>j$ the critical value $w_{k,j-1}(t)$ is non-zero and has no argument-critical points, the same is true for $w_{k,j}(t)$ on the compact set $[0,2\pi]\backslash(\underset{\underset{j(i)\neq k}{i \text{ s.t.}}}{\bigcup}V_i)$ if the approximation of $f_j$ by $F_j$ is sufficiently close. Since $w_{k,j}(t)$, $k>j$, is non-vanishing, its argument and the derivative of its argument are well-defined everywhere and naturally have a well-defined, non-zero limit at $t=t_i$. This proves the first four items of the claim for $w_{k,j}$ with $k\geq j$ and the fourth item for $w_{k,j}$, $k<j$.

For the fifth item consider that on $V_i$ with $j(i)\neq k$ the function $f_k$ is constant zero, while on $V_i$ with $j(i)=k>j$ the function $f_j$ is constant zero. In either case we have that 
\begin{equation}
|f_k w_{k,j}|_1=|f_k (w_{k,j-1}+f_jw_{j,j-1})|_1<\varepsilon/n
\end{equation} 
on $V_i$ by the induction hypothesis.

The sixth item in the claim is trivial for all $k$ with $k\leq j$, since in these cases we already know that $t=t_i$ with $j(i)=k$ are the only zeros of $w_{k,j}(t)$. For $k>j$ it is the same argument as in the base case ($j=1$). The fact that $F_j$ interpolates $f_j$ and its first $N$ derivatives guarantees that $t=t_i$ is a local minimum and by choosing the approximation sufficiently close, we can show that these are global minima of $|w_{k,j}(t)|$.

What is left to show are the first three items of the claim for $w_{k,j}(t)$ with $k<j$. This is particularly subtle in neighbourhoods of $t=t_i$ with $j(i)=k$, where $w_{k,j}$ has a zero.

%We want to proceed similarly for the definition of the other finite Fourier series $A_j(t)$. We would like to find some trigonometric polynomial $F_j(t)$ via interpolation and simultaneous approximation and define $A_j(t)$ as the product of $F_j(t)$ and $w_{j,j-1}(t)$. This will shift the critical values in the complex plane so that $w_{j,j}(t_i)=0$ if and only if $j(i)=j$. We then have that $\arg w_{j,j}=\arg w_{j,j-1}$ and in particular, $\tfrac{\partial \arg w_{j,j}}{\partial t}=\tfrac{\partial \arg w_{j,j-1}}{\partial t}\neq 0$ whenever it is defined. Furthermore, the same calculation as above (for $j=1$) guarantees that there are no argument-critical points of $w_{k,j}$ with $k>j$. However, we need to be a bit more careful in order to avoid introducing any argument-critical points of $w_{k,j}$ with $k< j$, in particular in a neighbourhood of $t_i$ with $j(i)=k<j$, where there is a zero of $w_{k,j}$.

Since $w_{k,j-1}$, $k<j$, is a finite Fourier series, both the numerator and the denominator of $\tfrac{\partial \arg w_{k,j-1}}{\partial t}$, given by 
\begin{equation}
NUM=\text{Re}(w_{k,j-1}(t))\tfrac{\partial \text{Im}(w_{k,j-1})}{\partial t}(t)-\text{Im}(w_{k,j-1}(t))\tfrac{\partial \text{Re}(w_{k,j-1})}{\partial t}(t)
\end{equation} 
and 
\begin{equation}
DEN=(\text{Re}(w_{k,j-1}(t)))^2+(\text{Im}(w_{k,j-1}(t)))^2,
\end{equation}
are real analytic. As both of these go to zero as $t$ approaches $t_i$ with $j(i)=1$, the derivative $\tfrac{\partial \arg w_{k,j-1}}{\partial t}$ is not defined at these points. However, by assumption it has a well-defined limit as $t$ approaches $t_i$ and by the interpolation property of each $F_j'$, $j'<j$, it is equal to $\tfrac{\partial w_{k}}{\partial t}(t_i)\neq 0$.

Since both $NUM$ and $DEN$ are real-analytic, we may consider their power series at $t=t_i$. Let $\underset{m}{\sum} b_m (t-t_i)^m$ be the power series of $DEN$ and let $m(i)$ denote the lowest index with $b_{m(i)}\neq 0$. (Since the limit of their quotient is well-defined and non-zero, using the power series of $DEN$ to define $m(i)$ would have the same result). From the interpolation properties of $F_i$, $i=1,2,\ldots,j$, we know that the lowest order term of $F_k$ is quadratic and the lowest order of $F_i$, $i\neq k$, is $N+1>2$. Since $w_{k,k-1}(t)$ is non-zero for all $t$, we have that $m(i)=4$. We know thus that for any $\delta'>0$ there are neighbourhoods $W_i\subset V_i$ of $t_i$ on which $|NUM|$ and $|DEN|$ are both greater than $\delta'|(t-t_i)^m|$, where $m>m(i)=4$.

Since we chose $F_j$ such that $|F_j|_N<\varepsilon'$ outside of $V_i$ with $j(i)=j$ for some arbitrarily small $\varepsilon'$, we have $|F_j(t)|<|(t-t_i)^N|\varepsilon'$ and $|\tfrac{\partial F_j}{\partial t}(t)|<|(t-t_i)^{N-1}|\varepsilon'$ for all $t\in V_i$, $j(i)\neq j$. It follows that in $W_i\subset V_i$, $j(i)\neq j$, we have $|w_{k,j}(t)|\geq \delta'|(t-t_i)^m|-|(t-t_i)^N|\varepsilon''\text{max}_{t\in W_i}|w_{j,j-1}(t)|$, $N> m>m(i)=4$, which has no zero in $W_i$ apart from $t=t_i$ if $\varepsilon''$ is sufficiently small. Therefore, the $t_i$s with $j(i)=k$ are the only zeros of $w_{k,j}$ in $W_i$, $j(i)\neq j$. Since these were also the only zeros of $w_{k,j-1}$ anywhere, a sufficiently close approximation $F_j$ of $f_j$ guarantees that there are no zeros of $w_{k,j}$ outside of the $W_i$s with $j(i)\neq j$ either. As in the previous cases this follows from $f_j\equiv 0$ outside of $V_i$, $j(i)=j$, and $|w_{k,j}|\geq|v_k|-|w_{k,j}-v_k|>(n-j-1)\varepsilon/n$ on $V_i$, $j(i)=j$.

Similarly, the modulus of the numerator $NUM_{k,j}$ of $\tfrac{\partial \arg w_{k,,j}}{\partial t}$ satisfies
\begin{align}
|NUM_{k,j}|=&\left|\text{Re}(w_{k,j-1}+A_j)\frac{\partial\text{Im}(w_{k,j-1}+A_j)}{\partial t}-\text{Im}(w_{k,j-1}+A_j)\frac{\partial\text{Re}(w_{k,j-1}+A_j)}{\partial t}\right|\nonumber\\
%\end{equation}
%\begin{align}
%\left|\frac{\partial \arg w_{1,2}}{\partial t}\right|
%|NUM_{1,2}|=&\left|\text{Re}(w_{1,1}+A_2)\frac{\partial\text{Im}(w_{1,1}+A_2)}{\partial t}-\text{Im}(w_{1,1}+A_2)\frac{\partial\text{Re}(w_{1,1}+A_2)}{\partial t}\right|\nonumber\\
=&\left|NUM_{k,j-1}+\text{Re}(w_{k,j-1})\frac{\partial\text{Im}(A_j)}{\partial t}-\text{Im}(w_{k,j-1})\frac{\partial\text{Re}(A_j)}{\partial t}\right.\nonumber\\
&\left. +\text{Re}(A_j)\frac{\partial\text{Im}(w_{k,j-1})}{\partial t}-\text{Im}(A_j)\frac{\partial\text{Re}(w_{k,j-1})}{\partial t}\right.\nonumber\\
&\left. +\text{Re}(A_j)\frac{\partial\text{Im}(A_{j})}{\partial t}-\text{Im}(A_j)\frac{\partial\text{Re}(A_{j})}{\partial t}\right|\nonumber\\
>&\delta|(t-t_i)^m|-2|(t-t_i)^N|\varepsilon'\text{max}_{t\in W_i}\left|\tfrac{\partial w_{k,j-1}}{\partial t}(t)\right|\text{max}_{t\in W_i}\left|w_{j,j-1}(t)\right|\nonumber\\
&-2|(t-t_i)^{N-1}|\varepsilon'\text{max}_{t\in W_i}|w_{k,j-1}(t)|\text{max}_{t\in W_i}\left|w_{j,j-1}(t)\right|\nonumber\\
&-2|(t-t_i)^{N}|\varepsilon'\text{max}_{t\in W_i}|w_{k,j-1}(t)|\text{max}_{t\in W_i}\left|\tfrac{\partial w_{j,j-1}}{\partial t}(t)\right|\nonumber\\
&-2|(t-t_i)^{2N-1}|\varepsilon'^2\text{max}_{t\in W_i}\left|w_{j,j-1}(t)\right|^2\nonumber\\
&-4|(t-t_i)^{2N}|\varepsilon'\left|\tfrac{\partial w_{j,j-1}}{\partial t}(t)\right|\text{max}_{t\in W_i}\left|w_{j,j-1}(t)\right|\nonumber\\
>&0,
\end{align}
in $W_i$, $j(i)\neq j$, where we use $A_j=F_j w_{j,j-1}$ and $|\text{Re}(z)|\leq |z|$ for all $z\in\mathbb{C}$ (and likewise for $\text{Im}(z)$). The last inequality holds because $\varepsilon'$ can be chosen arbitrarily small and $N>m$.

Thus there are no argument-critical points of $w_{k,j}$ in $W_i$, $j(i)\neq j$. We already showed above that there are no argument-critical points of $w_{k,j}$ in $V_i$, $j(i)\neq k$. Since $w_{k,j-1}$ is non-zero and has no argument-critical points in the complement of these $W_i$ and $V_i$, a sufficiently close approximation $F_j$ of $f_j$ guarantees that there are no argument-critical points of $w_{k,j}$ in the complement either.

Note that the interpolation property of $F_j$ guarantees that $\arg(w_{k,j}(t))$ and $\tfrac{\partial\arg(w_{k,j}(t))}{\partial t}$ have well-defined limits as $t$ goes to $t_i$, $j(i)=k$, which are equal to $\arg(w_{k,j-1}(t_i))$ and $\tfrac{\partial \arg(w_{k,j-1})}{\partial t}(t_i)\neq 0$, respectively. This concludes the proof of the claim by induction.

The desired loop of polynomials $h_t$ is given by $g_t+\underset{j=1}{\overset{n-1}{\sum}}A_{j}(t)$ with critical values $w_{j,n-1}(t)$, $j=1,2,\ldots,n-1$. The first three items of the claim imply that the square diagram associated to $h_t$ is a singular Rampichini diagram and by construction it is equal to $R_{\{(t_i,j(i))\}_{i=1,2,\ldots,M}}$.
%The Rampichini diagram of $h_t$ differs from that of $g_t$ only by a $C^1$-approximation of the lines and by the horizontal lines inserted at the heights $t=t_i$. (Note that this statement uses the fact that the $t_i$s are distinct from the critical heights $T_k$.) In particular, the labels of the two Rampichini diagrams are identical. A vertical line $\varphi=2\pi-\delta$ for some small enough $\delta$ intersects the lines in the Rampichini diagram of $h_t$ away from the heights $t=t_i$, so that the labeling transpositions spell the same band word as in the Rampichini diagram of $g_t$, namely that of $F$.
\end{proof}

\begin{lemma}\label{lem:oddversion}
Let $B$ be a P-fibered braid on $n=2n'-1$ strands with an odd, pure Rampichini diagram $R$. Suppose that the set $\{(t_i,j(i))\}_{i=1,2,\ldots,M}$, with $M$ even, from Lemma~\ref{lem:cons} satisfies the following symmetry. For every $i=1,2,\ldots,M/2$, we have $t_{i+M/2}=t_i+\pi\text{ mod }2\pi$ and $k(i+M/2)=k(i)+n'-1\text{ mod }2n'-2$, where $v_{j(i+M/2)}(t_{i+M/2})=v(t_{i+M/2})_{k(i+M/2)}$ and $v_{j(i)}(t_i)=v(t_i)_{k(i)}$. Then $h_t$ in Lemma~\ref{lem:cons} can be taken to be odd.
\end{lemma}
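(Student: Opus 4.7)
The plan is to carefully adapt the construction in the proof of Lemma~\ref{lem:cons} so that every step preserves the odd symmetry $h_{t+\pi}(u) = -h_t(-u)$. Oddness of the Rampichini diagram $R$ guarantees the existence of an odd loop $g_t$ realizing $B$, and by the approximation discussion following Theorem~\ref{thm:weighted} we may assume its coefficients are already finite Fourier series of the appropriate parity (coefficients of even/odd powers of $u$ being odd/even functions in the $t \mapsto t+\pi$ sense). Differentiating $g_{t+\pi}(u) = -g_t(-u)$ shows that the critical points and values come in pairs: there is an involution $j \leftrightarrow j'$ on $\{1,\ldots,n-1\}$ with $c_{j'}(t+\pi) = -c_j(t)$ and $v_{j'}(t+\pi) = -v_j(t)$. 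The hypothesis on $\{(t_i,j(i))\}$ is precisely the statement that this data is invariant under the induced involution $(t_i,j(i)) \mapsto (t_i+\pi,j'(i))$ via the pairing $i \leftrightarrow i+M/2$.

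The first step is to perform the initial $C^1$-small deformation of the critical values so that the functions $f_j$ occur in symmetric pairs: for paired indices, set $f_{j'}(t) = f_j(t-\pi)$, and similarly adjust $|v_j|$ symmetrically on $V_i$ and $V_{i+M/2}$. This preserves the relation $v_{j'}(t+\pi) = -v_j(t)$ throughout the deformation. Since the space of odd (resp.\ even) trigonometric polynomials is dense in the space of odd (resp.\ even) $2\pi$-periodic $C^k$ functions and the required interpolation data at $t_i$ and $t_i+\pi$ are compatible with the parity constraints, Lemma~\ref{lem:appint} yields a trigonometric approximation of the critical points $c_j(t)$ and the constant term $a_0(t)$ that respects the parity, producing an odd loop $g_t$ with finite-Fourier-series coefficients whose critical values $w_j$ still satisfy $w_{j'}(t+\pi) = -w_j(t)$.

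The crucial modification is to the inductive construction of the $A_j$: rather than processing indices one at a time, we process them in pairs. For each pair $(j,j')$ with $j \ne j'$ we construct only the ``representative'' $F_j$ (an odd/even trigonometric polynomial interpolating $f_j$ and its first $N$ derivatives at every $t_i$ with $j(i)=j$), then set $A_j(t) = F_j(t)\,w_{j,j-1}(t)$ and \emph{define} $A_{j'}(t) := -A_j(t+\pi)$. A direct computation using $w_{j',j-1}(t+\pi) = -w_{j,j-1}(t)$ (the inductive oddness) and $F_j(t_i) = -1$ shows that
\begin{equation}
w_{j',j'}(t_{i+M/2}) = w_{j',j-1}(t_i+\pi) - A_j(t_i+2\pi) = -w_{j,j-1}(t_i)(1 + F_j(t_i)) = 0,
\end{equation}
so the paired $A_{j'}$ introduces exactly the required zero at $t_{i+M/2}$, and the combined increment $A_j + A_{j'}$ is automatically odd in $t$. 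For indices $j$ fixed by the involution we choose $F_j$ itself to be odd (density of odd trigonometric polynomials together with the fact that the interpolation data at $t_i$ and $t_i + \pi$ are already consistent with oddness makes this possible).

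The final step is to rerun the six-item induction of Lemma~\ref{lem:cons}, now with pairs of indices as the induction unit; the same $C^k$-smallness estimates on $|F_j|$ give the same control on all the $w_{k,j}$, only applied simultaneously to the pair $(j,j')$. The resulting $h_t = g_t + \sum_j A_j(t)$ is a loop of monic polynomials with finite-Fourier-series coefficients whose singular Rampichini diagram is $R_{\{(t_i,j(i))\}}$, and the parity of each modification ensures $h_{t+\pi}(u) = -h_t(-u)$. The main obstacle is bookkeeping: one must check that the pair-wise inductive estimates go through without new interference between $A_j$ and $A_{j'}$ near $V_i \cup V_{i+M/2}$, and that when an index is fixed by the involution the odd choice of $F_j$ is still compatible with the original interpolation and smallness constraints. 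Both issues are handled by shrinking $\delta$ and $\varepsilon'$ enough that the neighborhoods $V_i$ and $V_{i+M/2}$ are disjoint and the oddness-preserving approximation is as close as needed.
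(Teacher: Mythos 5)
Your proposal is correct in strategy and is essentially the paper's own approach: rerun the construction of Lemma~\ref{lem:cons} so that every choice respects the symmetry $t\mapsto t+\pi$, $u\mapsto -u$. You are in fact more explicit than the paper's (very terse) proof: you identify the involution $j\leftrightarrow j'$ on critical-value curves coming from $v_{j'}(t+\pi)=-v_j(t)$, and for a swapped pair you define $A_{j'}(t):=-A_j(t+\pi)$, which makes $A_j+A_{j'}$ odd by construction and, as your computation shows, produces the required zero at $t_{i+M/2}$ (the term $A_j(t_i+\pi)$ that you silently drop does vanish, because $F_j$ interpolates $f_j\equiv 0$ to order $N$ at $t_{i+M/2}$, where $j(i+M/2)=j'\neq j$). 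Re-running the six-item induction with the pair as the unit is indeed only bookkeeping, since $A_{j'}$ has the same $C^1$-smallness and support properties as $A_j$, shifted by $\pi$.

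There is one parity slip, in the case of an index $j$ fixed by the involution. You want $A_j=F_jw_{j,j-1}$ to satisfy $A_j(t+\pi)=-A_j(t)$, and $w_{j,j-1}$ already satisfies $w_{j,j-1}(t+\pi)=-w_{j,j-1}(t)$ by the inductive hypothesis; hence $F_j$ must be \emph{even} in the sense $F_j(t+\pi)=F_j(t)$, not odd. An odd $F_j$ would in fact be impossible here: the interpolation requires $F_j(t_i)=F_j(t_i+\pi)=-1$ when $j(i)=j(i+M/2)=j$, and the bump $f_j$ itself is even for such $j$, so the data is consistent only with evenness. Replacing ``odd'' by ``even'' in that sentence (and invoking density of even trigonometric polynomials in even periodic functions, which follows from Lemma~\ref{lem:appint} in the same way) completes the argument. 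The published proof is itself loose about exactly this even/odd bookkeeping, so it is worth stating the parities carefully: the total perturbation $\sum_jA_j$ must satisfy $\bigl(\sum_jA_j\bigr)(t+\pi)=-\bigl(\sum_jA_j\bigr)(t)$, which your pairing achieves for swapped indices and an even $F_j$ achieves for fixed ones.
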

\begin{proof}
This lemma only requires small modifications to the proof of Lemma~\ref{lem:cons}. Since $R$ is odd, we may take $g_t$ to be odd. Since the set $\{(t_i,j(i))\}_{i=1,2,\ldots,M}$ has the symmetry above, the data set for the interpolation can be taken to have an odd symmetry. In other words, all functions $f_j$ that are approximated can be taken to satisfy $f_j(x+\pi)=-f_j(x)$. Since the set of odd trigonometric polynomials is dense in the space of odd periodic functions, Lemma~\ref{lem:appint} guarantees that the interpolating function $F_j$ can also be taken to be odd. Note that the critical values $w_j(t)$ of $g_t$ are odd functions, so that $A_j$ is also odd. We obtain $h_{t+\pi}(u)=g_{t+\pi}(u)+\underset{j=1}{\overset{n-1}{\sum}}A_j(t+\pi)=-g_t(-u)-\underset{j=1}{\overset{n-1}{\sum}}A_j(t)=-h_t(-u)$.
\end{proof}

\begin{lemma}\label{lem:ranal}
The functions $|w_{j,n-1}(t)|$ are real-analytic.
\end{lemma}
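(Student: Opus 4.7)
The strategy is to prove that $w_{j,n-1}(t)$ is a real-analytic $\mathbb{C}$-valued function whose zeros have even order, so that passing to the modulus preserves real-analyticity.

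Since $h_t$ has finite-Fourier-series coefficients, it depends real-analytically on $t$, and the singular Rampichini diagram setup ensures that the critical points $c_j(t)$ (i.e., roots of $h_t'$) remain simple and pairwise distinct for all $t$. The implicit function theorem thus identifies each $c_j(t)$ and hence $w_{j,n-1}(t)=h_t(c_j(t))$ as a real-analytic $\mathbb{C}$-valued function of $t$. Away from the finitely many $t_i$ with $j(i)=j$, we have $w_{j,n-1}(t)\neq 0$, so $|w_{j,n-1}|^{2}=w_{j,n-1}\overline{w_{j,n-1}}$ is a strictly positive real-analytic function and therefore $|w_{j,n-1}|=\sqrt{|w_{j,n-1}|^{2}}$ is real-analytic there.

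It remains to handle a neighbourhood of each $t_i$ with $j(i)=j$. The bump function $f_j$ satisfies $f_j(t)=-1+(t-t_i)^{2}/\delta^{4}+O((t-t_i)^{4})$, so the interpolation property of $F_j$, together with $F_k(t)=O((t-t_i)^{N+1})$ for $k\neq j$ and $w_{j,j-1}(t_i)\neq 0$ (which follows from the first item of the claim in the proof of Lemma~\ref{lem:cons}), yields the local expansion
\begin{equation}
w_{j,n-1}(t)=\delta^{-4}\,w_{j,j-1}(t_i)\,(t-t_i)^{2}+O\!\left((t-t_i)^{3}\right).
\end{equation}
Hence we may write $w_{j,n-1}(t)=(t-t_i)^{2}\tilde{w}(t)$ near $t_i$, with $\tilde{w}$ real-analytic and $\tilde{w}(t_i)\neq 0$. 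Because $\tilde{w}\overline{\tilde{w}}$ is strictly positive and real-analytic in a neighbourhood of $t_i$, its square root $|\tilde{w}|=\sqrt{\tilde{w}\overline{\tilde{w}}}$ is real-analytic there, and so is $|w_{j,n-1}(t)|=(t-t_i)^{2}|\tilde{w}(t)|$ since $(t-t_i)^{2}\geq 0$. Combined with the previous paragraph, this proves the lemma.

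The only genuinely non-formal step is the identification of the order of vanishing of $w_{j,n-1}$ at each $t_i$ as \emph{even}, in fact equal to $2$; this is what prevents the cusp that generically appears when one takes the modulus of a complex-valued real-analytic function at a zero. The evenness is forced by the construction of Lemma~\ref{lem:cons}, namely the quadratic minimum of the bump function $f_j$ at $t=t_i$, and is preserved by the trigonometric interpolation because $F_j$ matches the first $N$ derivatives of $f_j$ at $t_i$.
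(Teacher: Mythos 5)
Your proposal is correct and follows essentially the same route as the paper: both arguments reduce to identifying the exact order of vanishing at each $t_i$ (you show $w_{j,n-1}$ vanishes to order exactly $2$, the paper equivalently shows $|w_{j,n-1}|^2$ vanishes to order exactly $4$), which is forced by the quadratic minimum of the bump function $f_j$ and preserved by the interpolation of the first $N$ derivatives, after which one factors out $(t-t_i)^2$ and takes the square root of a positive real-analytic function. The only cosmetic difference is that the paper obtains real-analyticity of $w_{j,n-1}$ directly from its being a finite Fourier series by construction, so your appeal to the implicit function theorem (and to simplicity of the critical points) is unnecessary.
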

\begin{proof}
Since $w_{j,n-1}(t)$ is a finite Fourier series, its absolute value is real-analytic, except possibly at values of $t$, where $w_{j,n-1}(t)=0$. By construction 
\begin{equation}
w_{j,n-1}(t)=(1+F_j(t))w_{j,j-1}(t)+\sum_{k>j}A_k(t)
\end{equation} 
and each $A_k(t)$ has a lowest order of at least $5$, while the lowest order of the power series of $(1+F_j(t))^2|w_{j,j-1}(t)|^2$ is equal to $4$. Thus the root of $|w_{j,n-1}(t)|^2$ at $t=t_i$ is exactly of order 4. We can thus write $|w_{j,n-1}(t)|^2=(t-t_i)^4W(t)$, where $W$ is real-analytic with $W(t_i)> 0$. Then $|w_{j,n-1}(t)|$ is given by $(t-t_i)^2\sqrt{W(t)}$, which is a well-defined non-negative function, which squares to $|w_{j,n-1}(t)|^2$ and is real-analytic, since $W(t)$ is non-zero in a neighbourhood of $t_i$.
\end{proof}

\section{Isolated singularities}\label{sec:deform}

%Let $R$ be a simple Rampichini diagram corresponding to a P-fibered braid on $n$ strands. Let $v_j(t)$ be the critical values of the corresponding loop of polynomials $g_t$. Consider now the following loop in $V_n$:
%\begin{align}
%w_j(t)&=\nonumber\\
%w_{n-j}&=.
%\end{align} 

%This loop in $V_n$ has a simple Rampichini diagram $R'$ as in Figure ? and in particular lifts to a loop $G_t$ in the space of monic polynomials with degree $2n-1$ and simple roots. Note that this loop satisfies the odd symmetry $G_{t+\pi}=-G_t$.

%Consider now a singular Rampichini diagram $R_{\{t_i,j(i)\}_{i=1,2,\ldots,M}}$ that is related to $R$. Then there is a similarly symmetric singular Rampichini diagram $R'_{\{t_i,j(i)\}_{i=1,2,\ldots,2M}}$ that is related to $R$, where $t_{i+M}=t_i+\pi$ and $j(i+M)=2n-1-j(i)$, shown in Figure?.
Let $R$ be an odd, pure Rampichini diagram corresponding to a P-fibered braid $B$ on $n$ strands.
By Lemma~\ref{lem:cons} we obtain a singular Rampichini diagram $R_{\{t_i,j(i)\}_{i=1,2,\ldots,M}}$ from a loop $h_t$ in the space of polynomials, whose coefficients are trigonometric polynomials, for any given set of $t_i$ and $j(i)$. Furthermore, by Lemma~\ref{lem:oddversion} the new trigonometric loop $h_t$ still is odd, satisfying $h_{t+\pi}(u)=-h_t(-u)$, if the data $(t_i,j(i))$ displays an odd symmetry. We write $B_{sing}$ for the singular braid that is formed by the roots of $h_t$.

We can use the construction from Theorem~\ref{thm:weighted} to obtain a semiholomorphic, radially weighted homogeneous polynomial
\begin{equation}\label{eq:scale}
f(u,r\rme^{\rmi t})=r^{kn}h(\tfrac{u}{r^{k}},\rme^{\rmi t}),
\end{equation}
where $h(u,t)=h_t(u)$, $n=\deg(h_t)$ and $k$ is a sufficiently large odd integer. The fact that this is a polynomial (as opposed to a rational map or a function involving square roots) follows from the same reasoning as Theorem~\ref{thm:weighted}.

Note that the variety of $f$ is the cone of the closed singular braid $B_{sing}$. Since the singular crossings are critical points, $f$ does not have an isolated singularity (not even weakly isolated). It is a degenerate mixed function in the sense of Oka \cite{oka} and in the sense of \cite{bodeeder}. In \cite{oka} Oka generalizes the definition of Newton polygons of complex polynomials to mixed functions and in \cite{bodeeder} we investigate deformations of non-degenerate mixed functions and found that adding higher order terms (above the boundary of the Newton polygon of the mixed function) do not change the link of a singularity. Since we are now dealing with a degenerate function, our situation is different. We claim that we can add a term of the form $r^m A(\rme^{\rmi t})$, where $m$ is a large even integer and $A$ is an even polynomial in $\rme^{\rmi t}$ and $\rme^{-\rmi t}$, i.e., $A(t+\pi)=A(t)$, and construct in this way a polynomial with an isolated singularity. The function $A$ is found via trigonometric interpolation.

Recall that the critical values of $h_t$ are denoted by $w_{j,n-1}(t)$, whose zeros occur at $t=t_i$, $i=1,2,\ldots,M$, corresponding to singular crossings in $B_{sing}$ and singularities in the singular Rampichini diagram. We assume that the conditions from Lemma~\ref{lem:oddversion} are satisfied, so that $M$ is even and the $t_i$ are indexed such that $t_{i+M/2}=t_i+\pi\text{ mod }2\pi$ for all $i=1,2,\ldots,M/2$. A singular crossing of $B_{sing}$ corresponds to a circle in the singular Rampichini diagram at $t_i$ on the curve corresponding to the critical value $w_{j(i),n-1}(t_i)$. This means that $w_{j,n-1}(t)=0$ if and only if $j=j(i)$ and $t=t_i$. By construction $\arg(w_{j(i),n-1}(t))$ and $\tfrac{\partial\arg(w_{j(i),n-1})}{\partial t}$ have well-defined limits as $t$ goes to $t_i$, which we denote by $\arg(w_{j(i),n-1}(t_i))$ and $\tfrac{\partial\arg(w_{j(i),n-1})}{\partial t}(t_i)$, respectively.

We will find $A$ via a trigonometric interpolation that fixes the values of $A$ and $\tfrac{\partial A}{\partial t}$ at each $t_i$, $i=1,2,\ldots,M$. We can take a solution of this interpolation problem to be of degree $M$. Set $m=\max\{kn+1,M\}$, where $k$ is the integer chosen in Eq.~\eqref{eq:scale}.

%Recall that by Lemma~\ref{lem:ranal} the absolute values $|w_{j,n-1}(t)|$ of the critical values are real analytic in $t$.  every $t_i$ with $j(i)=j$ the lowest order of its power series centered at $t_i$ is $2$. 
We define $A:\mathbb{R}\to\mathbb{C}$ to be the even finite Fourier series that solves
\begin{align}
\arg(A(t_i))&=\arg(w_{j(i),n-1}(t_i))+\pi,\nonumber\\
\frac{\partial |A|}{\partial t}(t_i)&=0,\nonumber\\
\frac{\partial \arg(A)}{\partial t}&=\frac{kn+m}{2 m-kn}\frac{\partial \arg(w_{j(i),n-1})}{\partial t}(t_i),
\end{align}
for all $i=1,2,\ldots,M/2$. By the even symmetry of $A$ this also fixes the value of $A$ and its derivative at $t_i$, $i=M/2+1,M/2+2,\ldots,M$. It is implicit in the interpolation data that $A(t_i)\neq 0$ for all $i$.

\begin{lemma}\label{lem:intA}
Let $A:\mathbb{R}\to\mathbb{C}$ be an even finite Fourier series as above. Then $p=f+r^m A$ is a semiholomorphic polynomial with an isolated singularity.
\end{lemma}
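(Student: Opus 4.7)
The plan is to verify two things: that $p$ is a genuine polynomial in $u$, $v$, $\overline{v}$, and that its real Jacobian has rank $2$ at every nonzero point of a small neighbourhood of the origin. For the former, $f$ is semiholomorphic by Theorem~\ref{thm:weighted}, so only the term $r^m A(\rme^{\rmi t}) = (v\overline v)^{m/2}A(v/\sqrt{v\overline v})$ needs attention. Since $A$ is $\pi$-periodic, it is a finite sum of the modes $\rme^{2\rmi kt}=(v/\overline v)^k$; each such mode becomes $(v\overline v)^{m/2-|k|}v^{2k}$ or $(v\overline v)^{m/2-|k|}\overline v^{2|k|}$, which is a polynomial in $v,\overline v$ provided $m$ is even and $m\geq 2|k|$. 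Both hold: $m=\max\{kn+1,M\}$ is even because $kn+1$ is even ($k$ and $n$ are odd) and $M$ is even by the odd symmetry from Lemma~\ref{lem:oddversion}, and $m\geq M\geq\deg A$.

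For the Jacobian I would use the standard criterion for semiholomorphic maps: the real Jacobian fails to have rank $2$ exactly when $\partial_u p=0$ and $|\partial_v p|=|\partial_{\overline v} p|$. Since $\partial_u p=\partial_u f=r^{k(n-1)}h'_t(u/r^k)$, the first condition forces $u/r^k$ to be one of the critical points $c_{j,n-1}(t)$ of $h_t$. At such a point $h_t(u/r^k)=w_{j,n-1}(t)$, and writing $v=r\rme^{\rmi t}$ and using $\partial_v = \tfrac{\rme^{-\rmi t}}{2}(\partial_r - \tfrac{\rmi}{r}\partial_t)$ one computes $\partial_v p = \tfrac{\rme^{-\rmi t}}{2}(\alpha-\rmi\beta)$ and $\partial_{\overline v} p = \tfrac{\rme^{\rmi t}}{2}(\alpha+\rmi\beta)$, where $\alpha = r^{kn-1}kn\,w_{j,n-1}(t)+r^{m-1}m\,a(t)$ and $\beta=r^{kn-1}w'_{j,n-1}(t)+r^{m-1}a'(t)$ with $a(t):=A(\rme^{\rmi t})$. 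Consequently $|\partial_{\overline v} p|^2-|\partial_v p|^2 = \mathrm{Im}(\alpha\overline\beta)$, and the task reduces to proving $\mathrm{Im}(\alpha\overline\beta)\neq 0$ for every $t\in[0,2\pi]$, every $j$, and every sufficiently small $r>0$.

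Setting $\rho:=r^{m-kn}$, away from the singular parameter values $t_i$ (when $j=j(i)$, and for every $j$ at every $t$ otherwise) the critical value $w=w_{j,n-1}(t)$ is nonzero and the leading behaviour is $r^{2(kn-1)}kn\,\mathrm{Im}(w\overline{w'}) = -r^{2(kn-1)}kn|w|^2\partial_t\arg w$, which is nonzero by the P-fibered property inherited by $h_t$. The main technical obstacle is a small neighbourhood of each $t_i$ with $j=j(i)$, where by Lemma~\ref{lem:ranal} $w_{j(i),n-1}$ vanishes to exact order two; the first term then drops to $O((t-t_i)^4)$ and all three pieces of $\mathrm{Im}(\alpha\overline\beta)$ become comparable. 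Writing $w=w_2(t-t_i)^2+O((t-t_i)^3)$ with $\arg w_2=\theta_0:=\lim_{t\to t_i}\arg w_{j(i),n-1}$, the interpolation conditions $\arg a(t_i)=\theta_0+\pi$, $\partial_t|a|(t_i)=0$ and $\partial_t\arg a(t_i)=\tfrac{kn+m}{2m-kn}\theta_0'$ force $a(t_i)\overline{w_2}$ to be a negative real number, killing the $(t-t_i)$-order term of $\mathrm{Im}(a\overline{w'})$; a short computation then collapses the expansion to $\theta_0'\cdot Q((t-t_i)^2)$ plus higher-order terms, with
\begin{equation*}
Q(s) = -kn|w_2|^2 s^2 + (2m-kn)\rho|a||w_2|s - \tfrac{m(kn+m)}{2m-kn}\rho^2|a|^2.
\end{equation*}

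The required non-vanishing reduces to $Q(s)<0$ for all $s\geq 0$, equivalently to the inequality $(2m-kn)^3<4knm(kn+m)$. This fails if $m$ is too large compared with $kn$, which is the step I expect to require the most care. However, Theorem~\ref{thm:weighted} permits choosing $k$ as large as desired; taking $k$ so that $kn+1\geq M$ forces $m=kn+1$, and then a direct arithmetic check gives $(2m-kn)^3-4knm(kn+m)=-7(kn)^3-6(kn)^2+8kn+8<0$ for $kn\geq 3$ (the remaining case $n=1$ is trivial since $h_t$ then has no critical points in $u$). Combining both regimes and invoking compactness of $[0,2\pi]$ yields $\mathrm{Im}(\alpha\overline\beta)\neq 0$ uniformly for small $r>0$, which is the desired isolated singularity of $p$ at the origin.
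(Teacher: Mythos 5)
Your reduction is the same as the paper's: verify polynomiality of $r^mA$ using evenness of $A$ and $m$, locate the potential critical points at $u=r^kc_j(t)$ via $\partial_up=\partial_uf$, and reduce isolation to the non-vanishing of $\operatorname{Im}(\alpha\overline{\beta})$, which is (up to sign and a positive factor of $r$) exactly the $2\times2$ determinant in Eq.~\eqref{eq:det}; the away-from-$t_i$ regime is handled identically. The divergence is in the endgame near $t_i$. The paper groups the terms as in Eq.~\eqref{eq:det2}, shows that the cross terms~\eqref{eq:sincos} vanish to order $(t-t_i)^2$ by the interpolation property, bounds them by $2\sqrt{knm}R_1R_2|\alpha_i'|$ on a small $U_i$, and completes the square to get $|\det|\geq|\delta_i'|mr^{2m-1}R_1^2>0$ with no further constraint on $m$. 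You instead keep the order-$(t-t_i)^2$ cross term with coefficient $(2m-kn)\rho|a||w_2|$ (which is what the third interpolation condition, read literally as a condition on $\partial_t\arg A$ rather than on $\delta_i'=\partial_t(\arg A-\arg w_{j(i),n-1})$, actually produces -- I verified your $Q(s)$ against a direct expansion) and conclude via negative-definiteness of the binary quadratic form in $\bigl(|w_2|(t-t_i)^2,\rho|a|\bigr)$, i.e.\ the discriminant condition $(2m-kn)^3<4knm(kn+m)$. This buys robustness -- you do not need the exact cancellation of the quadratic coefficient -- but costs an extra arithmetic constraint, which you correctly discharge by enlarging $k$ so that $m=kn+1$ (legitimate, since $k$ is only required to be large and odd, $\deg A=M$ is fixed before $k$, and $kn+1$ is even). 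Two points deserve to be made explicit rather than asserted: (i) the passage from $\operatorname{Im}(\alpha\overline{\beta})=\theta_0'Q((t-t_i)^2)+\text{h.o.t.}$ to non-vanishing \emph{uniformly in small $r$} needs the quantitative form $Q\leq -c\bigl(|w_2|^2(t-t_i)^4+\rho^2|a|^2\bigr)$ of negative-definiteness, against which the error terms (of orders $(t-t_i)^5$, $\rho(t-t_i)^3$, $\rho^2(t-t_i)$) are absorbed by AM--GM on a neighbourhood $U_i$ independent of $r$; and (ii) the sign of $\theta_0'Q$ must be checked to be constant, which your $Q<0$ for all $s\geq0$ does give. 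With those spelled out the argument is complete.
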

\begin{proof}
The function $f$ is a semiholomorphic polynomial. Since $m$ is at least the degree of $A$, we can cancel the denominator of $r^m A(\rme^{\rmi t})=(v\overline{v})^{m/2} A(\tfrac{v}{\sqrt{v\overline{v}}})$. Since $A$ is even, all remaining factors of $\sqrt{v\overline{v}}$ come with an even exponent, so that $f+r^mA$ is a polynomial in $u$, $v$ and $\overline{v}$.

One advantage of working with semiholomorphic polynomials is that a critical point must satisfy $\tfrac{\partial p}{\partial u}=0$. Since $\tfrac{\partial p}{\partial u}=\tfrac{\partial f}{\partial u}$, this can only happen at critical points of $f$, considered as a family of complex polynomials, parametrised by $v$. Note that $p|_{v=0}=u^n$, so that the origin is the only critical point with $v=0$. The $u$-coordinate of a point with $\tfrac{\partial f}{\partial u}=0$ is thus $r^k c_j(t)$, where $c_j(t)$ is a critical point of $h_t$. Thus all points, where the derivative of $p$ with respect to $u$ vanishes are of the form $(r^k c_j(t),r\rme^{\rmi t})$ with $p(r^k c_j(t),r\rme^{\rmi t})=r^{kn}w_{j,n-1}(t)+r^mA(t)$. We now consider the real Jacobian matrix of $p$ at such a point in coordinates $(\text{Re}(u),\text{Im}(u),r,t)=(\text{Re}(u),\text{Im}(u),|v|,\arg(v))$:
\begin{equation}
\begin{pmatrix}
0 & 0 & kn r^{kn-1}\text{Re}(w_{j,n-1}(t))+mr^{m-1}\text{Re}(A(t)) & r^{kn} \tfrac{\partial \text{Re}(w_{j,n-1}(t))}{\partial t}+r^m \tfrac{\partial \text{Re}(A(t))}{\partial t}\\
0 & 0 & kn r^{kn-1}\text{Im}(w_{j,n-1}(t))+mr^{m-1}\text{Im}(A(t)) &  r^{kn} \tfrac{\partial \text{Im}(w_{j,n-1}(t))}{\partial t}+r^m \tfrac{\partial \text{Im}(A(t))}{\partial t}
\end{pmatrix}.
\end{equation}
In order to show that the singularity at the origin is isolated, we show that for small positive values of $r$ this matrix has full rank. For this we compute the determinant of the 2-by-2 matrix given by the non-zero entries above. The determinant is equal to
\begin{align}
&knr^{2kn-1}(\text{Re}(w_{j,n-1}(t))\text{Im}(w_{j,n-1}'(t))-\text{Im}(w_{j,n-1}(t))\text{Re}(w_{j,n-1}'(t)))\nonumber\\
&+knr^{kn+m-1}(\text{Re}(w_{j,n-1}(t))\text{Im}(A'(t))-\text{Im}(w_{j,n-1}(t))\text{Re}(A'(t)))\nonumber\\
&+mr^{kn+m-1}(\text{Re}(A(t))\text{Im}(w_{j,n-1}'(t))-\text{Im}(A(t))\text{Re}(w_{j,n-1}'(t)))\nonumber\\
&+mr^{2m-1}(\text{Re}(A(t))\text{Im}(A'(t))-\text{Im}(A(t))\text{Re}(A'(t))),\label{eq:det}
\end{align}
where the dash denotes the derivative with respect to $t$. By assumption we have that $m>kn$.
%Since we only want to know if the determinant is non-zero, we can divide by $r^{2kn-1}$. 

For a set of open neighbourhoods $U_i$ (to be determined later) of the $t_i$s the resulting expression goes to 
\begin{align}\label{eq:uilimit}
&(\text{Re}(w_{j,n-1}(t))\text{Im}(w_{j,n-1}'(t))-\text{Im}(w_{j,n-1}(t))\text{Re}(w_{j,n-1}'(t))\nonumber\\
=&|w_{j,n-1}(t)|^2\arg(w_{j,n-1})'(t)\neq0
\end{align} 
on $[0,2\pi]\backslash\left(\underset{j(i)=j}{\underset{i \text{ s.t.}}{\cup}}U_i\right)$ as $r$ goes to zero. Thus we obtain a non-zero determinant for all points $(r^kc_j(t),r\rme^{\rmi t})$ with sufficiently small $r$ except possibly for those with $j=j(i)$ and $t\in U_i$ for some $i$.

For every $i=1,2,\ldots,M$ we define $R_1(t)=|A(t)|$ and $\delta_i(t)$ such that 
\begin{equation}
A(t)=R_1(t)\rme^{\rmi (\arg(w_{j(i),n-1}(t))+\delta_i(t))}
\end{equation} 
on $U_i$. Furthermore, we write $\alpha_i(t)$ for $\arg(w_{j(i),n-1}(t))$ and $R_2(t)=|w_{j(i),n-1}(t)|$. This means that
\begin{align}
\text{Re}(A(t))&=R_1(t) (\cos(\arg(w_{j(i),n-1}(t)))\cos(\delta_i(t))-\sin(\arg(w_{j(i),n-1}(t)))\sin(\delta_i(t)),\nonumber\\
\text{Im}(A(t))&=R_1(t)(\cos(\arg(w_{j(i),n-1}(t)))\sin(\delta_i(t))+\sin(\arg(w_{j(i),n-1}(t)))\cos(\delta_i(t)).
\end{align} 
We now look at the individual terms in Eq.~\eqref{eq:det}. First,
\begin{align}
&\text{Re}(w_{j(i),n-1}(t))\text{Im}(w_{j(i),n-1}'(t))-\text{Im}(w_{j(i),n-1}(t))\text{Re}(w_{j(i),n-1}'(t))\nonumber\\
=&R_2(t)^2\alpha_i(t)',
\end{align}
which is 0 if and only if $t=t_i$.

Second, 
\begin{align}
&\text{Re}(w_{j(i),n-1}(t))\text{Im}(A'(t))-\text{Im}(w_{j(i),n-1}(t))\text{Re}(A(t))'\nonumber\\
=&R_2(t)\left\{\cos(\alpha_i(t))\right.\nonumber\\
&\left[R_1(t)\left(-\sin(\alpha_i(t))\alpha_i(t)'\sin(\delta_i(t))+\cos(\alpha_i(t))\cos(\delta_i(t))\delta_i(t)'\right.\right.\nonumber\\
&\left.+\cos(\alpha_i(t))\alpha_i(t)'\cos(\delta_i(t))-\sin(\alpha_i(t))\sin(\delta_i(t))\delta_i(t)'\right)\nonumber\\
&\left.+R_1'(t)(\cos(\alpha_i(t))\sin(\delta_i(t))+\sin(\alpha_i(t))\cos(\delta_i(t)))\right]\nonumber\\
&-\sin(\alpha_i(t))\left[R_1(t)\left(-\sin(\alpha_i(t))\alpha_i(t)'\cos(\delta_i(t))-\cos(\alpha_i(t))\sin(\delta_i(t))\delta_i(t)'\right.\right.\nonumber\\
&\left.-\cos(\alpha_i(t))\alpha_i(t)'\sin(\delta_i(t))-\sin(\alpha_i(t))\cos(\delta_i(t))\delta_i(t)'\right]\nonumber\\
&\left.+R_1'(t)(\cos(\alpha_i(t))\cos(\delta_i(t))-\sin(\alpha_i(t))\sin(\delta_i(t)))\right\}\nonumber\\
=&R_2(t)(R_1(t)\cos(\delta_i(t))(\delta_i(t)'+\alpha_i(t)')+R_1'\sin(\delta_i(t))).
\end{align}

Furthermore,
\begin{align}
&\text{Re}(A(t))\text{Im}(w_{j(i),n-1}(t))'-\text{Im}(A(t))\text{Re}(w_{j(i),n-1}(t))'\nonumber\\
=&R_1(t)\left[(\cos(\alpha_i(t))\cos(\delta_i(t))-\sin(\alpha_i(t))\sin(\delta_i(t)))(R_2'(t)\sin(\alpha_i(t))+R_2(t)\cos(\alpha_i(t))\alpha_i(t)')\right.\nonumber\\
&\left.-(\cos(\alpha_i(t))\sin(\delta_i(t))+\sin(\alpha_i(t))\cos(\delta_i(t)))(R_2'(t)\cos(\alpha_i(t))-R_2(t)\sin(\alpha_i(t))\alpha_i(t)')\right]\nonumber\\
=&R_1(t)R_2(t)\cos(\delta_i(t))\alpha_i(t)'-R_1(t)R_2'(t)\sin(\delta_i(t)).
\end{align}

Lastly, 
\begin{equation}
\text{Re}(A(t))\text{Im}(A(t))'-\text{Im}(A(t))\text{Re}(A(t))'=R_1(t)^2(\alpha_i(t)'+\delta_i(t)').
\end{equation}

Thus Eq.~\eqref{eq:det} becomes
\begin{align}\label{eq:det2}
&\alpha_i(t)'(knr^{2kn-1}R_2(t)^2+(kn+m)r^{kn+m-1}R_1(t)R_2(t)\cos(\delta_i(t))+mr^{2m-1}R_1(t)^2)\nonumber\\
&+\delta_i(t)'(knr^{kn+m-1}R_1(t)R_2(t)\cos(\delta_i(t))+mr^{2m-1}R_1(t)^2)\nonumber\\
&+\sin(\delta_i(t))r^{kn+m-1}(knR_1'(t)R_2(t)-mR_1(t)R_2'(t)).
\end{align}

Each term is a real analytic function of $t$ (cf. Lemma~\ref{lem:ranal}) and thus can be locally written as a power series centered at $t_i$. Consider the lowest order term of the series of $\sin(\delta_i(t))(knR_1'(t)R_2(t)-mR_1(t)R_2'(t))$ and recall that $2$ is the lowest order of $R_2$. Since $0$ is the lowest order of $R_1$ and $1$ is the lowest order of $\sin(\delta_i)$ (since $\delta_i(t_i)$ is either $0$ or $\pi$, and $\delta_i'(t_i)\neq 0$ by the interpolation property), the lowest order of $\sin(\delta_i)mR_1R_2'$ is $2$, while the lowest order of $\sin(\delta_i)knR_1'R_2$ is greater than $2$. Likewise, the lowest orders of the series for $\alpha_i'(kn+m)R_1R_2\cos(\delta_i)$ and $\delta_i'knR_1R_2\cos(\delta_i)$ centered at $t_i$ are also both $2$.

Let $r_{2}$ denote the lowest order coefficient of $R_2$. We compute the coefficient of $(t-t_i)^{2}$ for 
\begin{align}\label{eq:sincos}
&\alpha_i(t)'(kn+m)R_1(t)R_2(t)\cos(\delta_i(t))\nonumber\\
&+\delta_i(t)'knR_1(t)R_2(t)\cos(\delta_i(t))+\sin(\delta_i(t))(knR_1'(t)R_2(t)-mR_1(t)R_2(t)')
\end{align}
and obtain
\begin{align}
&\alpha_i'(t_i)(kn+m)r_{2}R_1(t_i)\cos(\delta_i(t))\nonumber\\
&+\delta_i'(t_i)knr_{2}R_1(t_i)\cos(\delta_i(t))-\cos(\delta_i(t))\delta_i'(t_i)mR_1(t_i)2 r_{2}\nonumber\\
=&r_{2}R_1(t_i)\cos(\delta_i(t_i))(\alpha_i'(t_i)(kn+m)+\delta_i'(t_i)(kn-2m)),
\end{align}
which is $0$, since $\delta_i'(t_i)=\tfrac{\alpha_i(t_i)(kn+m)}{2m-kn}$ by the interpolation property.

Therefore, the lowest order of Eq.~\eqref{eq:sincos} is at least $3$. It follows that there is a neighbourhood $U_i$ of $t_i$ where the absolute value of Eq.~\eqref{eq:sincos} is less than or equal to the absolute value of
\begin{equation}
2R_1(t)R_2(t)\sqrt{knm}\alpha_i(t)',
\end{equation}
whose lowest order is $2$. This implies that the absolute value of the determinant in Eq.~\eqref{eq:det2} is at least
\begin{align}
&|\alpha_i(t)'r^{2kn-1}(\sqrt{kn}R_2(t)-\sqrt{m}R_1(t)r^{m-kn})^2+\delta_i(t)'mr^{2m-1}R_1(t)^2|\nonumber\\
=&|\alpha_i(t)'|r^{2kn-1}(\sqrt{kn}R_2(t)-\sqrt{m}R_1(t)r^{m-kn})^2+|\delta_i(t)'|mr^{2m-1}R_1(t)^2\nonumber\\
\geq&|\delta_i(t)'|mr^{2m-1}R_1(t)^2>0,
\end{align}
since $r>0$ and $U_i$ can be chosen such that $R_1(t)>0$ and $|\delta_i(t)'|>0$. The first equality holds because $\alpha_i(t)'$ and $\delta_i(t)'$ have the same sign. 

Thus there are no critical points $(r^kc_j(t),r\rme^{\rmi t})$ of $p$ with $t\in U_i$ and $r>0$. Note that the $U_i$s do not depend on $r$. Since $S^1\backslash\left(\underset{i=1}{\overset{M}{\cup}} U_i\right)$ is compact, sufficiently small choices of $r>0$ guarantee that $(r^kc_j(t),r\rme^{\rmi t})$ with $t\in[0,2\pi]\backslash\left(\underset{i=1}{\overset{M}{\cup}} U_i\right)$ is not a critical point of $p$ either (cf. Eq~\eqref{eq:uilimit}). Therefore, the origin is an isolated singularity of $p$.
\end{proof}

\section{The proof of Theorem \ref{thm:main}}\label{sec:proof}

In order to finish the proof of Theorem \ref{thm:main} we have to determine the links of the constructed singularities via their Rampichini diagrams.

\begin{lemma}
Let $B$ be the vanishing set of $p|_{r=\varepsilon}:\mathbb{C}\times S^1\to\mathbb{C}$, with $\varepsilon>0$ sufficiently small. Then $B$ is a closed geometric braid, whose braid isotopy class is P-fibered.\\
A Rampichini diagram for $B$ is obtained from $R_{\{(t_i,j(i))\}_{i=1,2,\ldots,M}}$ by inserting inner loops $\gamma_{j(i)}^{\varepsilon_i}$ at $t=t_i$, $i=1,2,\ldots,M/2$, where $\varepsilon_i=\sign\left(\tfrac{\partial \arg(w_{j(i),n-1})}{\partial t}(t_i)\right)$, and removing the circles at $t=t_i$, $i=M/2+1,M/2+2,\ldots,M$.
\end{lemma}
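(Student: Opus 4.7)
The critical values of $p|_{r=\varepsilon,t}$ viewed as a polynomial in $u$ are
\[
V_j(t) \defeq \varepsilon^{kn} w_{j,n-1}(t) + \varepsilon^m A(t).
\]
The zero set forms a geometric $n$-braid iff $V_j(t) \neq 0$ for all $j,t$, and this braid is P-fibered iff in addition $\partial_t\arg V_j(t) \neq 0$ everywhere; the Rampichini diagram is then read off from the curves $t\mapsto (\arg V_j(t),t)$. My plan is to verify these three properties in turn.

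For non-vanishing, I would split into cases. If $t$ is bounded away from every $t_i$, or if $j \neq j(i)$ for the nearest $t_i$, then $\varepsilon^{kn} w_{j,n-1}(t)$ dominates the correction $\varepsilon^m A(t)$ uniformly on a compact set. The only delicate case is $t$ close to $t_i$ with $j = j(i)$, which I would handle through the polar decomposition $w_{j(i),n-1}(t) = R_2(t)e^{i\alpha_i(t)}$ and $A(t) = R_1(t)e^{i(\alpha_i(t)+\delta_i(t))}$ of Section~\ref{sec:deform}. The interpolation data for $A$ give $\delta_i(t_i) = \pi$ for $i \leq M/2$ and, using the odd symmetry of $h_t$ combined with the evenness of $A$, $\delta_i(t_i) = 0$ for $i > M/2$. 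Writing
\[
V_{j(i)}(t) = e^{i\alpha_i(t)}\bigl[\varepsilon^{kn}R_2(t) + \varepsilon^m R_1(t) e^{i\delta_i(t)}\bigr],
\]
in the first case the imaginary part of the bracket vanishes only at $t_i$ (where the real part equals $-\varepsilon^m R_1(t_i) \neq 0$), while in the second the real part is positive on a neighbourhood of $t_i$. So $V_j$ is non-zero everywhere and the zero set is a geometric braid.

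For the monotonicity of $\arg V_j$, the strategy is to repeat the expansion used to derive Eq.~\eqref{eq:det2} in the proof of Lemma~\ref{lem:intA}. The expression $\mathrm{Im}(\overline{V_j(t)}V_j'(t))$ admits the same polar expansion in $r=\varepsilon$, $R_1$, $R_2$, $\sin\delta_i$ and the derivatives $\alpha_i'$, $\delta_i'$, $R_1'$, $R_2'$; the interpolation prescription $\partial_t\arg A(t_i)=\tfrac{kn+m}{2m-kn}\alpha_i'(t_i)$ was designed exactly to cancel the coefficient of the second-order term in $(t-t_i)$ that would otherwise spoil positivity. I expect this case-by-case bookkeeping to be the main technical obstacle, but the argument parallels Lemma~\ref{lem:intA} so closely that the same lower bound should transfer after a short algebraic rearrangement.

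Finally, the Rampichini diagram of $B$ is determined by the curves $(\arg V_j(t),t)$. Away from the $t_i$'s these agree, to arbitrary precision in $\varepsilon$, with the curves of $R_{\{(t_i,j(i))\}}$. Near $t_i$ with $i \leq M/2$, $\arg V_{j(i)}(t)$ is close to $\alpha_i(t)$ outside a thin $t$-interval containing $t_i$, passes through $\alpha_i(t_i)+\pi$ at $t_i$, and by monotonicity must wrap by exactly $\varepsilon_i\cdot 2\pi$ with $\varepsilon_i=\mathrm{sign}(\alpha_i'(t_i))$; the modulus drops to order $\varepsilon^m$ while the other $|V_j|$ remain of order $\varepsilon^{kn}$, so the wrap lies inside all other critical-value curves, which is precisely the inner loop $\gamma_{j(i)}^{\varepsilon_i}$. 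Near $t_i$ with $i > M/2$ the alignment $\delta_i(t_i)=0$ makes $\arg V_{j(i)}(t)$ pass through $\alpha_i(t_i)$ monotonically and without wrapping, which amounts to simply erasing the circle at $t_i$ from the singular diagram. Assembling these local models gives the Rampichini diagram stated in the lemma.
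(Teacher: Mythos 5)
Your first step (non-vanishing of the critical values $V_j(t)=\varepsilon^{kn}w_{j,n-1}(t)+\varepsilon^m A(t)$, via the polar decomposition and the values $\delta_i(t_i)=\pi$ for $i\leq M/2$, $\delta_i(t_i)=0$ for $i>M/2$) and your last step (identifying the local models near the $t_i$ with an inserted inner loop, respectively an erased circle) match the paper's argument. The gap is in the middle step, and it is a real one.

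You propose to prove that $\partial_t\arg V_j(t)\neq 0$ for \emph{all} $t$, i.e.\ that $p|_{r=\varepsilon}$ itself is a P-fibered geometric braid, by ``repeating the expansion of Eq.~\eqref{eq:det2}'' and invoking the designed cancellation from Lemma~\ref{lem:intA}. But the quantity you need, $\operatorname{Im}(\overline{V_j}V_j')$, is \emph{not} the determinant of Eq.~\eqref{eq:det}: the four blocks there carry the weights $kn,kn,m,m$ coming from the $r$-derivatives of $r^{kn}$ and $r^m$, whereas in $\operatorname{Im}(\overline{V_j}V_j')$ all four blocks appear with equal weight. Consequently the interpolation condition $\partial_t\arg A(t_i)=\tfrac{kn+m}{2m-kn}\alpha_i'(t_i)$, which was tuned to kill the coefficient of $(t-t_i)^2$ in Eq.~\eqref{eq:sincos} (a linear combination weighted by $kn$ and $m$), does \emph{not} kill the corresponding coefficient in your expression: the analogous second-order coefficient is proportional to $2\alpha_i'(t_i)-\delta_i'(t_i)$, which vanishes only if $m=kn$. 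So ``the same lower bound transfers after a short algebraic rearrangement'' is not justified; you would have to run a genuinely different estimate (a discriminant/AM--GM argument on the quadratic form in $\varepsilon^{kn}R_2$ and $\varepsilon^{m}R_1$, together with control of all higher-order corrections on a neighbourhood whose size is independent of $\varepsilon$), and you have not done so. Note also that the paper explicitly declines to claim what you are trying to prove: the remark following the lemma states that $p|_{r=\varepsilon}$ \emph{might} have argument-critical points, and the lemma only asserts that the braid \emph{isotopy class} is P-fibered.

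The paper's route around this is worth internalizing, because it is what makes the lemma provable without the hard estimate. Near $t_i$ with $i\leq M/2$ one only needs two soft facts: the path $t\mapsto V_{j(i)}(t)$ crosses the ray $\arg=\alpha_i(t_i)+\pi$ exactly once in $U_i$ (this follows from $\delta_i(t_i)=\pi$, $\delta_i'(t_i)\neq 0$ via Eq.~\eqref{eq:nonzerow}), and it is innermost among all critical values there. Hence, \emph{up to a homotopy of the loop of critical values rel its behaviour outside $U_i$ and staying in $(\mathbb{C}\setminus\{0\})\times S^1$}, it is an inner loop $\gamma_{j(i)}^{\varepsilon_i}$; for $i>M/2$ the winding number is zero and the arc can be straightened. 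Such a homotopy of critical values lifts to a homotopy of the loop of polynomials and hence to a braid isotopy of $B$, which is exactly why the conclusion is stated at the level of isotopy classes. If you want to keep your direct approach, you must either carry out the full estimate on $\operatorname{Im}(\overline{V_j}V_j')$ honestly (and be prepared for it to fail in the intermediate regime), or weaken your claim and insert the winding-number/isotopy argument.
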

\begin{remark}
We do not claim that $B$ itself is a P-fibered \textit{geometric} braid, as $p|_{r=\varepsilon}$ might have argument-critical points. However, $B$ is braid isotopic to a P-fibered geometric braid. 
\end{remark}
\begin{proof}
The function $p|_{r=\varepsilon}$ corresponds to a loop of monic complex polynomials of degree $n$ with critical values $\varepsilon^{kn}w_{j,n-1}(t)+\varepsilon^mA(t)$. 

%We have to show that $\tfrac{\partial \arg(\varepsilon^{kn}w_{j,n-1}+\varepsilon^mA)}{\partial t}$ never vanishes.

We first have to show that no critical value is equal to $0$, which implies that $B$ is a closed braid. For this note that at $t=t_i$ we have $w_{j(i),n-1}(t_i)=0$ and $A(t_i)\neq 0$, so that $\varepsilon^{kn}w_{j(i),n-1}(t_i)+\varepsilon^mA(t_i)\neq 0$. Using the notation from the previous section we may write
\begin{equation}\label{eq:nonzerow}
\rme^{\rmi \alpha_i(t)}\varepsilon^{kn}\left(R_2(t)+R_1(t)\varepsilon^{m-kn}\rme^{\rmi \delta_i(t)}\right).
\end{equation}
Since $\delta_i(t_i)=0$ and $\delta_i'(t_i)\neq 0$, there is a neighbourhood $U_i$ of $t_i$, where $\delta_i(t)$ is neither 0 nor $\pi$, so that Eq.~\eqref{eq:nonzerow} is non-zero for all $t$ in $U_i$. The complement of these neighbourhoods is a compact subset of $S^1$, on which $R_2$ is non-zero, so that we can ensure that Eq.~\eqref{eq:nonzerow} is non-zero by choosing $\varepsilon$ sufficiently small. This shows that no critical value is 0. Thus there are no double roots of $p|_{r=\varepsilon}$. It follows that $B$ is a closed braid, since $p$ is a semiholomorphic polynomial.

Now we consider the square diagram associated to the loop of polynomials $p|_{r=\varepsilon}$. Outside of the $U_i$s the critical values approximate $w_{j,n-1}(t)$ arbitrarily well, so that outside the $U_i$s the square diagram looks exactly like the square diagram of $h_t$, which is a singular Rampichini diagram, all of whose singularities lie inside the $U_i$s. Likewise, the critical values whose corresponding curves do not have a singularity in $U_i$ are also arbitrarily well approximated by the critical values of $p|_{r=\varepsilon}$. Thus we only have to study what happens in a neighbourhood of the singular points of the singular Rampichini diagram associated to $h_t$.

There are two cases to consider. First, let $i\in\{1,2,\ldots,M/2\}$. Eq.~\eqref{eq:nonzerow} shows that $t=t_i$ is the unique value in $U_i$ for which $\arg(\rme^{\rmi\alpha_i(t)}\varepsilon^{kn}(R_2(t)+R_1(t)\varepsilon^{m-kn}\rme^{\rmi \delta_i(t)}))=\alpha_i(t_i)+\pi$. At $t=t_i$ the derivative of $\arg(\rme^{\rmi\alpha_i(t)}\varepsilon^{kn}(R_2(t)+R_1(t)\varepsilon^{m-kn}\rme^{\rmi \delta_i(t)})$ with respect to $t$ is equal to $\delta_i'(t_i)$, which by construction has the same sign as $\alpha_i'(t_i)$. 

Let $\tilde{\varepsilon}_i>0$ and $U_i=(t_i-\tilde{\varepsilon}_i,t_i+\tilde{\varepsilon}_i)$. Note that we may choose $\tilde{\varepsilon}_i$ so small that both $\arg(w_{j(i),n-1})(t_i-\tilde{\varepsilon}_i))$ and $\arg(w_{j(i),n-1})(t_i-\tilde{\varepsilon}_i))$ are arbitrarily close to $\arg(w_{j(i),n-1}(t_i))$. Furthermore, we can assume that in $U_i$ the critical value $\varepsilon^{kn}w_{j(i),n-1}(t)+\varepsilon^mA(t)$ has the smallest absolute value of all critical values. Then the argument of $\varepsilon^{kn}w_{j(i),n-1}(t_i-\tilde{\varepsilon}_i)+\varepsilon^mA(t_i-\tilde{\varepsilon}_i)$ is arbitrarily close to the argument of $w_{j(i),n-1}(t_i-\tilde{\varepsilon}_i)$ and the argument of $\varepsilon^{kn}w_{j(i),n-1}(t_i+\tilde{\varepsilon}_i)+\varepsilon^mA(t_i+\tilde{\varepsilon}_i)$ is arbitrarily close to the argument of $w_{j(i),n-1}(t_i+\tilde{\varepsilon}_i)$. Since the path $\varepsilon^{kn}w_{j(i),n-1}(t)+\varepsilon^mA(t)$ crosses the line $\varphi=\alpha_i(t_i)+\pi$ exactly once and since it has the smallest absolute value among all critical values, this path can be deformed into one without any argument-critical points and that winds around the origin exactly once (cf. Figure~\ref{fig:deformation}). 

\begin{figure}
\labellist
\pinlabel a) at -100 1900
\pinlabel b) at 1450 1900
\small
\pinlabel $t=t_i+\pi$ at 80 1700
\pinlabel $t=t_i$ at 100 700
\pinlabel $t=t_i+\pi$ at 1850 1700
\pinlabel $t=t_i$ at 1850 700
\pinlabel $w_{j(i+M/2),n-1}(t)$ at 130 1250
\pinlabel $w_{j(i),n-1}(t)$ at 1100 480
\pinlabel $w_{j(i+M/2),n-1}(t)+\varepsilon^{m-kn}A(t)$ at 1900 1150
\pinlabel $w_{j(i),n-1}(t)+\varepsilon^{m-kn}A(t)$ at 2800 500
\endlabellist
\centering
\includegraphics[height=5cm]{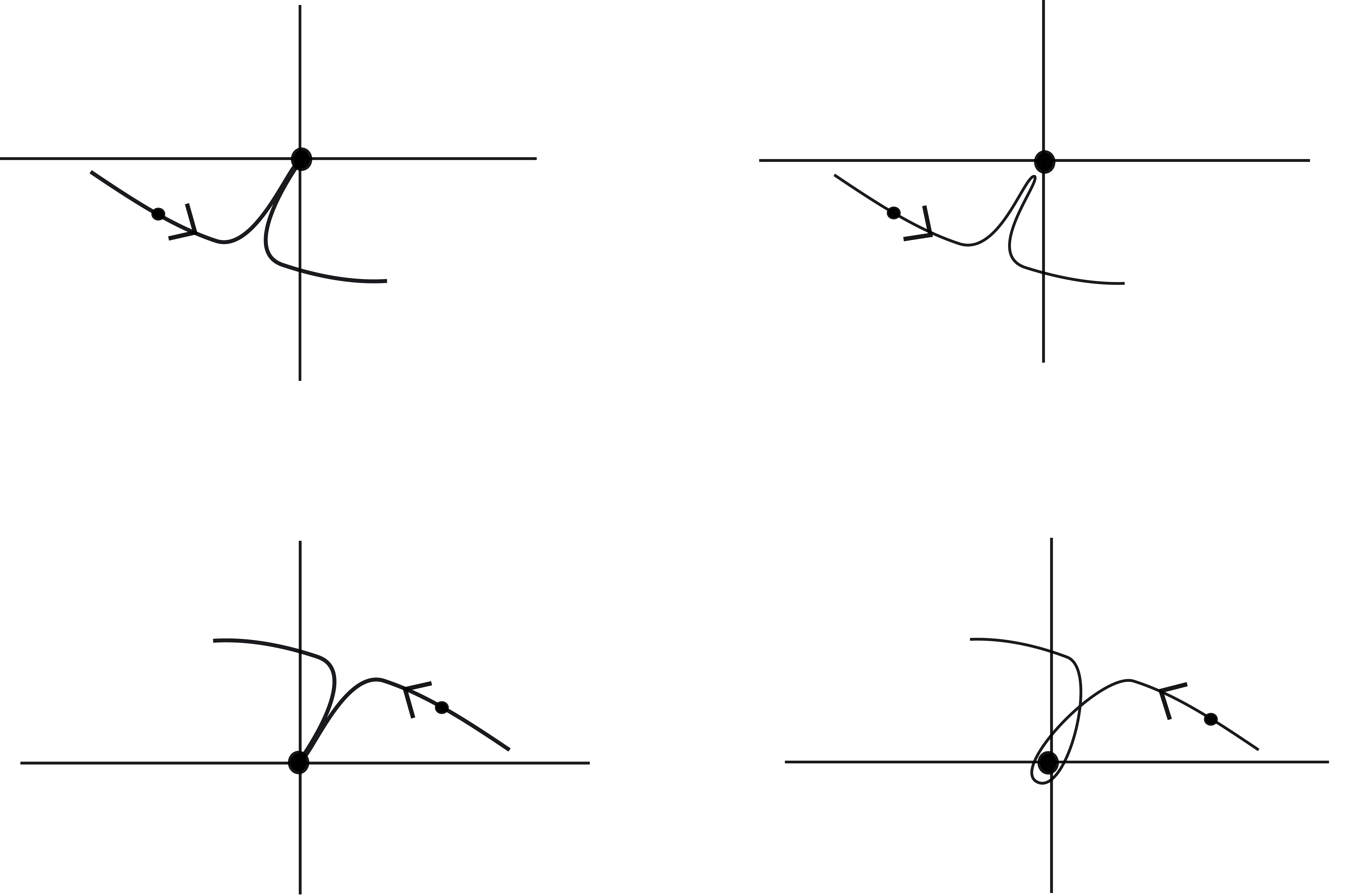}
\caption{a) The motion of the critical values $w_{j(i),n-1}(t)$ and $w_{j(i+M/2),n-1}(t)$ in neighbourhoods of $t=t_i$ and $t=t_{i+M/2}=t_i+\pi$, respectively. b) The motion (up to isotopy in $(\mathbb{C}\backslash\{0\})\times S^1$) of the deformed critical values $w_{j(i),n-1}(t)+\varepsilon^{m-kn}A(t)$ and $w_{j(i+M/2),n-1}(t)+\varepsilon^{m-kn}A(t)$ in neighbourhoods of $t=t_i$ and $t=t_{i+M/2}=t_i+\pi$, respectively. \label{fig:deformation}}
\end{figure}

This deformation of the loop of critical values lifts to a deformation of the braid $B$ \cite{bode:braided}. By construction the direction of this path (clockwise $(-1)$ or counterclockwise $(+1)$) is given by $\sign(\alpha_i'(t_i))$ and thus consistent with the monotonicity of the corresponding curve in the rest of the square diagram. Note that the deformation of the path taken by $\varepsilon^{kn}w_{j(i),n-1}(t)+r^mA(t)$ is an inner loop $\gamma_{j(i)}^{\sign(\alpha_i'(t_i))}$, since the moving critical value has the smallest absolute value throughout $U_i$.

Thus after a deformation of the critical values, which lifts to a braid isotopy of $B$, there are no argument-critical points in $U_i$, $i=1,2,\ldots,M/2$, and by Lemma~\ref{lem:insert} the labels at $t=t_i-\tilde{\varepsilon}_i$ match the labels at $t=t_i+\tilde{\varepsilon}_i$.

Now we consider $i=M/2+1,M/2+2,\ldots,M$. We know that at $t=t_i$ we have 
\begin{equation}
\arg(\rme^{\rmi\alpha_i(t)}\varepsilon^{kn}(R_2+R_1\varepsilon^{m-kn}\rme^{\rmi \delta_i(t)}))=\alpha_i(t_i).
\end{equation}  
Eq.~\eqref{eq:nonzerow} shows that $t=t_i$ is the unique point in $U_i$ where this is true and there is no point in $U_i$ with $\arg(\rme^{\rmi\alpha_i(t)}\varepsilon^{kn}(R_2+R_1\varepsilon^{m-kn}\rme^{\rmi \delta_i(t)}))=\alpha_i(t_i)+\pi$. It follows that the path taken by the critical value $\varepsilon^{kn}w_{j(i),n-1}(t)+\varepsilon^mA(t)$ in $U_i$ does not twist around the origin. It has winding number zero and can thus be deformed to a curve without argument-critical points, connecting $\varepsilon^{kn}w_{j(i),n-1}(t_i-\tilde{\varepsilon}_i)+\varepsilon^mA(t_i-\tilde{\varepsilon}_i)$ and $\varepsilon^{kn}w_{j(i),n-1}(t_i+\tilde{\varepsilon}_i)+\varepsilon^mA(t_i+\tilde{\varepsilon}_i)$ (cf. Figure~\ref{fig:deformation}).

Again this deformation of the loop of critical values lifts to a deformation of the polynomial and thus to a braid isotopy of $B$. In the $U_i$s with $i\in\{M/2+1,M/2+2,\ldots,M\}$ the deformed square diagram of $p|_{r=\varepsilon}$ thus looks exactly like the singular Rampichini diagram of $h_t$, except that the singularities have been removed (cf. Figure~\ref{fig:deformation}). 

Therefore, the complete deformed square diagram of $p|_{r=\varepsilon}$ is obtained from the singular Rampichini diagram $R_{\{(t_i,j(i))\}_{i=1,2,\ldots,M}}$ of $h_t$ by inserting inner loops of the appropriate signs at the singularities at $t=t_i$, $i=1,2,\ldots,M/2$, and simply removing the circles representing the singularities at $t=t_i$, $i=M/2+1,M/2+2,\ldots,M$.

Since there are no argument-critical points, the deformed diagram is a Rampichini diagram and $B$ is P-fibered.
\end{proof}

By the same arguments as in Theorem I.1 in \cite{bode:polynomial} the link of the singularity is equivalent to the closure of $B$.

%We know that the vanishing set of $f$ is the cone over a singular braid $B_{sing}$ with an odd singular Rampichini diagram $R_{\{(t_i,j(i))\}_{i=1,2,\ldots,M}}$.

%\begin{lemma}
%The Rampichini diagram of $B$ is obtained from $R_{\{(t_i,j(i))\}_{i=1,2,\ldots,M}}$ by inserting inner loops $\gamma_{j(i)}^{\varepsilon_i}$ at $t=t_i$, $i=1,2,\ldots,M/2$, where $\varepsilon_i=\sign\left(\tfrac{\partial \arg(w_{j(i),n-1})}{\partial t}(t_i)\right)$, and removing the circles at $t=t_i$, $i=M/2+1,M/2+2,\ldots,M$.
%\end{lemma}
%\begin{proof}
%\end{proof}

Note that this completes the proof of Theorem~\ref{thm:main}. 
\begin{proof}[Proof of Theorem~\ref{thm:main}]
Let $B_2$ be a P-fibered braid, whose Rampichini diagram $R'$ can be obtained from an odd, pure Rampichini diagram $R$ by the insertion of inner loops. Then there is a corresponding odd loop of polynomials $h_t$, parametrised by trigonometric polynomials, whose singular Rampichini diagram is $R$, but with singularities at $t=t_i$, $i=1,2,\ldots,M/2$, the positions where we want to insert inner loops, and at $t=t_{i+M/2}=t_i+\pi$. We construct the functions $f$, $A$ and $p$ and by the previous lemmas $p$ has an isolated singularity at the origin and the closure of $B_2$ is the link of the singularity. The polynomial $p$ is semiholomorphic and its degree with respect to $u$ is equal to $n$, the number of strands of $B_2$.  
\end{proof}

Theorem \ref{thm:main} deals with P-fibered braids whose Rampichini diagrams are obtained from odd, pure Rampichini diagrams by inserting inner loops. The result remains true if we replace ``odd'' by ``simple''.

\begin{theorem}\label{thm:main2}
Let $B_1$ be a P-fibered braid on $n$ strands with a simple, pure Rampichini diagram. Let $B_2$ be a P-fibered braid whose Rampichini diagram is obtained from that of $B_1$ by the insertion of inner loops. Then the closure of $B_2$ is real algebraic.\\
Furthermore,  the corresponding real polynomial map with an isolated singularity can be taken to be semiholomorphic (i.e., it can be written as a polynomial in complex variables $u$, $v$ and the complex conjugate $\overline{v}$) and of degree $2n-1$ with respect to the complex variable $u$.
\end{theorem}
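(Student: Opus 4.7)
The plan is to reduce Theorem~\ref{thm:main2} to Theorem~\ref{thm:main} via the doubling construction $B\mapsto\imath_n(B)m_n(B)$ from Section~\ref{sec:odd}. First, I exploit the simple hypothesis on the Rampichini diagram $R_1$ of $B_1$: a simple Rampichini diagram has fiber surface consisting of $n$ disks joined by $n-1$ bands, hence of genus zero with connected boundary, so the closure of $B_1$ is the unknot.

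Next, I form $\tilde{B}_1\defeq\imath_n(B_1)m_n(B_1)$ on $2n-1$ strands. The lemma preceding Theorem~\ref{thm:main} in Section~\ref{sec:odd} supplies an odd Rampichini diagram $\tilde{R}_1$ for $\tilde{B}_1$. Purity survives the doubling: the added trivial square diagram and the odd reflection built from Lemma~\ref{lem:trivialpath} neither braid nor identify the critical values in $\tilde{V}_{2n-1}$, so the loop $\theta_{2n-1}(\tilde{g}_t)$ lifts to a loop in $\tilde{V}_{2n-1}$. Hence $\tilde{R}_1$ is both odd and pure.

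The key step is to lift the inner loop insertions that produce $R_2$ from $R_1$ to inner loop insertions in $\tilde{R}_1$. Since the lower-left quadrant of $\tilde{R}_1$ is (up to label shifts in the trivial block) a copy of $R_1$, every inner loop $\gamma_{j(i)}^{\varepsilon_{j(i)}}$ prescribed in $R_1$ can be realized in $\tilde{V}_{2n-1}$ by moving the same critical value of $\tilde{g}_t$. After a uniform rescaling, the moving critical value can be arranged to have strictly smaller absolute value than all other $2n-2$ critical values of $\tilde{g}_t$ at each intersection, so the insertion is a valid inner loop in $\tilde{V}_{2n-1}$. Writing $\tilde{B}_2$ for the resulting P-fibered braid, one reads off from the labels at $\varphi=\pi$ that $\tilde{B}_2=\imath_n(B_2)m_n(B_1)$: the insertions modify only the labels contributed by the lower-left quadrant (which originally spelled $\imath_n(B_1)$), while the $m_n(B_1)$-contribution from the upper half of $\tilde{R}_1$ is untouched.

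Applying Theorem~\ref{thm:main} to the pair $(\tilde{B}_1,\tilde{B}_2)$ yields that the closure of $\tilde{B}_2$ is real algebraic, realised by a semiholomorphic polynomial of degree $2n-1$ with respect to $u$. Finally, since $\imath_n(\mathbb{B}_n)$ and $m_n(\mathbb{B}_n)$ share only strand $n$ inside $\mathbb{B}_{2n-1}$, the closure of $\imath_n(B_2)m_n(B_1)$ is the connected sum of the closures of $B_2$ and $B_1$, and the latter being the unknot means this connected sum is simply the closure of $B_2$. The main obstacle is the third step: one must check carefully that the rescaled moving critical value genuinely attains strict minimality against \emph{all} $2n-2$ critical values of $\tilde{g}_t$, including the new ones coming from the $m_n$-block and from the trivial transition, and that the insertions occur at $t$-values where the critical arguments of $\tilde{g}_t$ are still pairwise distinct, so that the insertion produces the braid word $\imath_n(B_2)m_n(B_1)$ rather than a conjugate disturbed by interaction with the $m_n$-strands.
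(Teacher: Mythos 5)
Your proposal is correct and follows essentially the same route as the paper: double $B_1$ to the odd, pure diagram for $\imath_n(B_1)m_n(B_1)$, insert the inner loops only in the lower-left quadrant to realize $\imath_n(B_2)m_n(B_1)$, invoke Theorem~\ref{thm:main}, and use that the simple diagram forces the closure of $B_1$ to be the unknot so that the connected sum is just the closure of $B_2$. The technical point you flag about minimality of the moving critical value is exactly what the paper's machinery (the singular diagram plus the deformation by $r^mA$) is designed to handle.
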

\begin{proof}
Examples of Rampichini diagrams of $B_2$ and $B_1$ are shown in Figure~\ref{fig:rampi_sequence}a) and b), respectively. We want to show that $B_2$ can also be obtained from a P-fibered braid with odd, pure Rampichini diagram by the insertion of inner loops. As in Section~\ref{sec:odd} we can construct an odd Rampichini diagram $R_{odd}$ for $\imath_n(B_1)m_n(B_1)$. Then the desired Rampichini diagram is obtained from $R_{odd}$ by inserting inner loops into the lower left quadrant of $R_{odd}$ (Figure~\ref{fig:rampi_sequence}d)). We can thus construct an odd singular Rampichini diagram with singularities at $t=t_i\in(0,\pi)$, $i=1,2,\ldots,M/2$, $j(i)\in\{1,2,\ldots,n-1\}$ and $t=t_{i+M/2}=t_i+\pi$, $i=1,2,\ldots,M/2$, $j(i+M/2)\in\{n,n+1,\ldots,2n-2\}$ corresponding to an odd loop of polynomials $h_t$ as in the proof of Theorem~\ref{thm:main} (cf. Figure~\ref{fig:rampi_sequence}c)).

\begin{figure}[h]
\labellist
\pinlabel a) at 100 3800
\pinlabel b) at 1000 3800
\pinlabel c) at 200 2950
\pinlabel d) at 200 1500
\endlabellist
\centering
\includegraphics[height=12cm]{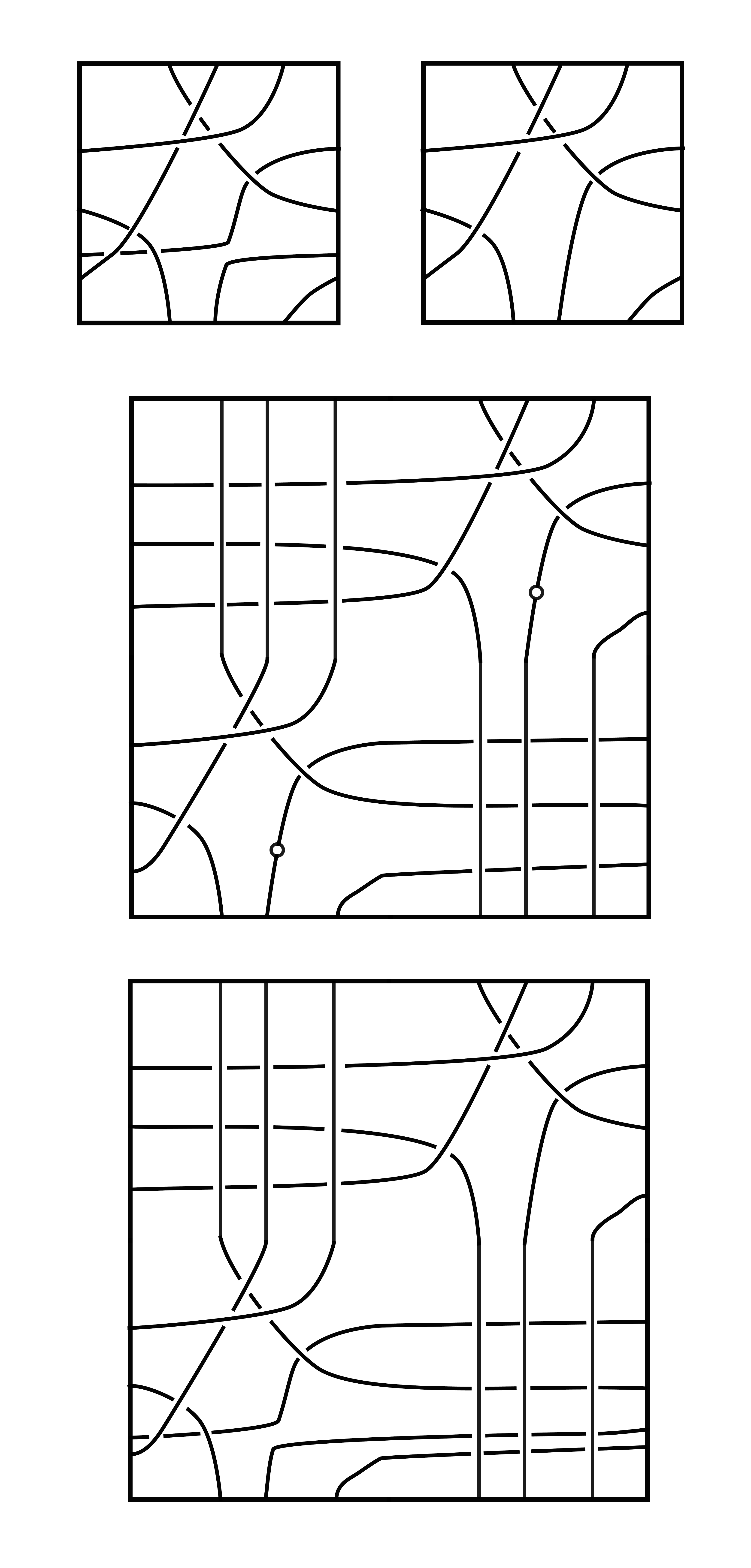}
\caption{A sequence of Rampichini diagrams, where we have omitted the labels. a) A Rampichini diagram that is obtained from a simple, pure Rampichini diagram by inserting an inner loop. b) The corresponding simple, pure Rampichini diagram. c) The singular Rampichini diagram with odd symmetry. d) The final Rampichini diagram. \label{fig:rampi_sequence}}
\end{figure}

The braid that one obtains from this odd singular Rampichini diagram by inserting inner loops at the singularities with $t_i\in(0,\pi)$ and deleting the circles of the singularities with $t_i\in(\pi,2\pi)$ is then $\imath_n(B_2)m_n(B_1)$. Since the Rampichini diagram of $B_1$ is simple, its closure is the unknot $O$, so that the closure of $\imath_n(B_2)m_n(B_1)$ is a connected sum of the closure of $B_2$ and $O$ and hence equal to the closure of $B_2$.

By Theorem~\ref{thm:main} every P-fibered braid that can be obtained from an odd, pure Rampichini diagram closes to a real algebraic link. Thus the closure of $B_2$ is real algebraic.
\end{proof}

\section{T-homogeneous braids and braids with positive Garside factor}\label{sec:homo}

We want to show that the family of T-homogeneous braid closures can be constructed as real algebraic links as in Theorem~\ref{thm:main} and Theorem~\ref{thm:main2}.

\begin{definition}
We say a braid word $A$ on $n$ strands is a subword of a braid word $B$ on $n$ strands if deleting some letters (i.e., bands) from $B$ results in $A$.
\end{definition}

%Start with T-homogeneous braid word. Pick subword with one BKL-generator for each critical values. Make it symmetric, has simple Rampichini diagram. Perform the construction with $t_k$ equal to the positions of removed generators. Every critical value has the desired label. The upper half forms the connected sum factor of an unknot (Markov destabilization). The corollary follows.

\begin{proof}[Proof of Theorem~\ref{thm:Thomo}]
We know that every T-homogeneous braid $B$ on $n$ strands is P-fibered and by definition $B$ can be represented by a strictly homogeneous word $w$ in a certain subset of BKL-generators, say $a_1,a_2,\ldots,a_{n-1}$ with signs $\varepsilon_j\in\{\pm 1\}$, $j=1,2,\ldots,n$. We can choose a subword of $w$ of length $n-1$ containing each $a_j^{\varepsilon_j}$. This subword is also T-homogeneous and hence P-fibered. Let $R$ denote its Rampichini diagram, which is simple and pure.

It follows from the definition of T-homogeneity that the Rampichini diagram of $B$ can be obtained from $R$ by inserting inner loops. Theorem~\ref{thm:main2} then implies that the closure of $B$ is real algebraic.
\end{proof}

Corollary~\ref{cor:homo} immediately follows, since homogeneous braids are T-homogeneous. By construction the corresponding polynomial is semiholomorphic. The degree of the polynomial with respect to the complex variable $u$ is $2n-1$, where $n$ is the number of strands of the homogeneous braid representative.

We proved in \cite{bodesat} that every link $L$ is a sublink of a real algebraic link that consists of two non-trivially linked copies of $L$. Stallings showed that for every link $L$ there is an unknot $O\in S^3\backslash L$, non-trivially linked with $L$, such that $L\cup O$ is the closure of a homogeneous braid \cite{stallings2}. Combining this with Corollary~\ref{cor:homo} results in a proof of the following corollary.

\begin{corollary}
For every link $L$ in $S^3$ there is an unknot $O\in S^3\backslash L$, non-trivially linked with $L$, such that $L\cup O$ is real algebraic.
\end{corollary}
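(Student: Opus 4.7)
The plan is to combine Stallings' classical theorem on homogeneous braid presentations with Corollary~\ref{cor:homo} of the present paper; the proof will be essentially a direct concatenation of these two inputs, with no genuine new work required.

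The first step is to invoke Stallings~\cite{stallings2}: for any link $L\subset S^3$ there is an unknot $O\subset S^3\setminus L$, playing the role of a braid axis for a suitable braiding of $L$, such that $L\cup O$ is the closure of a homogeneous braid $B$. I would then observe that $O$ is automatically non-trivially linked with $L$, since the linking number of a braid axis with any component of the braid closure equals the number of strands belonging to that component, which is strictly positive. The second and final step is to apply Corollary~\ref{cor:homo} to the homogeneous braid $B$: its closure $\widehat{B}=L\cup O$ is real algebraic, realised as the link of an isolated singularity of a semiholomorphic polynomial.

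The main (and really only) point requiring attention is the non-triviality of the linking of $O$ with $L$. This is not an obstacle but simply a sanity check, since it follows immediately from $O$ being a braid axis. No further estimates, approximations, or interpolations are needed beyond what has already been established in Sections~\ref{sec:trig}--\ref{sec:homo}, so the proof amounts to writing down the two citations in the correct order.
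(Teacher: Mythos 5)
Your proposal is correct and is essentially identical to the paper's argument: the paper also derives the corollary by combining Stallings' theorem that $L\cup O$ is a homogeneous braid closure (with $O$ the braid axis, hence non-trivially linked with $L$) with Corollary~\ref{cor:homo}. No further comment is needed.
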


%Since our construction produces semiholomorphic polynomials, this proves the conjecture by Öztürk that every homogeneous braid closure is the link of an isolated singularity of a semiholomorphic polynomial.
%\section{Braids with positive Garside factor}\label{sec:gar}

Now we prove Theorem~\ref{thm:delta}. It states that any braid that is the product of a positive power of the dual Garside element $\delta$ and a BKL-positive word closes to a real algebraic link. As in the previous proof we show that this family of braids is obtained from a simple, pure Rampichini diagram by inserting inner loops.

\begin{lemma}\label{lem:BKLseq}
For every positive generator $a_{i,j}$ there is a sequence of BKL-moves on the braid word $a_{1,n}a_{1,n-1}\ldots,a_{1,2}$ that results in a BKL-word whose last letter is $a_{i,j}$.
\end{lemma}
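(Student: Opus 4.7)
The plan is to produce, for each pair $(i,j)$ with $1\le i<j\le n$, an explicit sequence of BKL moves transforming the word $w_{0}\defeq a_{1,n}a_{1,n-1}\ldots a_{1,2}$ into one ending in $a_{i,j}$. I interpret ``BKL moves'' as applications of the defining relations \eqref{eq:1rel} and \eqref{eq:2rel} together with cyclic rotation of the word, the latter being the natural closure-preserving operation in the Rampichini-diagram framework used in the sequel. Note that a direct length- and order-preserving application of \eqref{eq:1rel}--\eqref{eq:2rel} to $w_0$ is not sufficient, since adjacencies of the form $a_{1,k}a_{1,k-1}$ are irreducible; the cyclic rotations are what break this obstruction.

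The case $i=1$ is immediate: $w_{0}$ is a cyclic product of $a_{1,2},\ldots,a_{1,n}$, so a cyclic rotation places $a_{1,j}$ at the end. The nontrivial case is $i>1$. The key input is the instance of \eqref{eq:2rel}
\begin{equation*}
a_{1,i}a_{1,j}=a_{1,j}a_{i,j}=a_{i,j}a_{1,i}\qquad(1<i<j\le n),
\end{equation*}
obtained by substituting $(k,j,i)\mapsto(1,i,j)$ in the paper's triangle. This both \emph{introduces} the non-star generator $a_{i,j}$ and exhibits the form $a_{1,j}a_{i,j}$ ending in $a_{i,j}$. Since the adjacencies in $w_{0}$ are only of the descending form $a_{1,k}a_{1,k-1}$---to which the above relation does not directly apply---a first cyclic rotation produces an ascending junction ($a_{1,2}a_{1,n}$), after which further rotations and triangle moves realise any desired ascending pair $a_{1,p}a_{1,q}$ at the end of the word.

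I would then proceed by induction on $j-i$. In the base case $j=i+1$, the generator $a_{i,i+1}=\sigma_{i}$ is brought to the end by cyclically rotating $w_{0}$ so that $a_{1,i}a_{1,i+1}$ occupies the last two positions, and applying the triangle once. In the inductive step $j-i>1$, I would introduce an intermediate $a_{i,k}$ for some $i<k<j$ via the triangle applied to the ascending pair $a_{1,i}a_{1,k}$, then use the further triangle $a_{i,k}a_{k,j}=a_{k,j}a_{i,j}$ together with commutations \eqref{eq:1rel} to shift the resulting $a_{i,j}$ to the end of the word. The principal obstacle is the \emph{directionality} of \eqref{eq:2rel}: only certain ordered products are rewritable (for example $a_{1,j}a_{1,i}$ with $j>i$ is irreducible), so intermediate forms must be chosen carefully to avoid dead ends. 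That a valid sequence always exists for every $(i,j)$ is ensured on abstract grounds by the dual Garside theory of Birman--Ko--Lee \cite{BKL}: each band generator $a_{i,j}$ is a simple element of the positive monoid and hence right-divides $\delta$, to whose conjugacy class $w_{0}$ belongs; my explicit manipulations above realise one such factorisation by a sequence of moves adapted to $w_{0}$.
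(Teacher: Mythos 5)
Your reading of ``BKL-moves'' is where the proof goes off track. In this lemma the word $a_{1,n}a_{1,n-1}\ldots a_{1,2}$ is a cactus (the labels of a horizontal slice of a square diagram), and the admissible moves are the label changes at crossings, Eqs.~\eqref{eq:label1}--\eqref{eq:label2}: an adjacent pair $(\tau_k,\tau_{k+1})$ may be replaced by $(\tau_{k+1},\tau_{k+1}\tau_k\tau_{k+1})$ or by $(\tau_k\tau_{k+1}\tau_k,\tau_k)$, i.e.\ one letter slides past its neighbour while the other is conjugated. These moves preserve the cactus condition $\prod_k\tau_k=(1\to n\to\ldots\to 2)$ but not the braid element spelled by the slice, so they are strictly more permissive than literal substitution inside \eqref{eq:1rel}--\eqref{eq:2rel}. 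Your observation that the descending adjacencies $a_{1,k+1}a_{1,k}$ admit no literal application of the relations is correct, and it in fact shows that the relations alone cannot prove the lemma (for $n=3$ no relation applies to $a_{1,3}a_{1,2}$ at all, so this word is not right-divisible by $a_{2,3}$ in the BKL monoid); but the correct remedy is the conjugation move above, not cyclic rotation. A plain cyclic rotation is not available here: it does not preserve the product of the transpositions (a transposition never commutes with the $n$-cycle for $n\geq 3$, so the rotated tuple is no longer a cactus), and it cannot be realized inside the square diagram $R'(a_{i,j})$, whose curves are required not to meet the right edge $\varphi=2\pi$ --- the only place where anything rotation-like (and then only with a $+1\bmod n$ index shift) occurs.

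Beyond this, neither of your two fallbacks closes the gap. The explicit induction is left unfinished (you acknowledge the ``dead ends'' caused by the directionality of \eqref{eq:2rel} without resolving them), and the appeal to dual Garside theory is unsound: $a_{1,n}\ldots a_{1,2}$ is conjugate to, but not equal to, $\delta$ (already $a_{1,3}a_{1,2}\neq\sigma_2\sigma_1$ in $\mathbb{B}_3$), and conjugacy does not transfer right-divisibility, so the fact that $a_{i,j}$ right-divides $\delta$ says nothing about $w_0$. The paper's proof is a short explicit computation with the conjugation moves and needs no rotation, no induction and no abstract input: slide $a_{1,i}$ rightwards until it is adjacent to $a_{1,j}$, conjugating each letter $a_{1,m}$ it passes into $a_{i,m}$; swap $a_{1,i}$ with $a_{1,j}$, conjugating the former by the latter to create $a_{i,j}$; then slide $a_{i,j}$ to the end of the word, which leaves the remaining letters $a_{1,m}$ with $m<j$ unchanged because the transpositions $(i\to j)$ and $(1\to m)$ are disjoint. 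Rewriting your argument with these moves makes it both correct and considerably shorter.
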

\begin{proof}
Assume $i>j$ and take the generator $a_{1,i}$ and move it towards $a_{1,j}$ conjugating the labels in between by $a_{1,i}$ at each step. After this the word looks like this:
\begin{equation}
a_{1,n}a_{1,n-1}\ldots a_{1,i+1}a_{i,i-1}a_{i,i-2}\ldots,a_{i,j+1}a_{1,i}a_{1,j}a_{1,j-1}\ldots a_{1,2}.
\end{equation}
Then exchange $a_{1,i}$ and $a_{1,j}$, conjugating $a_{1,i}$ by $a_{1,j}$ to obtain:
\begin{equation}
a_{1,n}a_{1,n-1}\ldots a_{1,i+1}a_{i,i-1}a_{i,i-2}\ldots,a_{i,j+1}a_{1,j}a_{i,j}a_{1,j-1}\ldots a_{1,2}.
\end{equation}
Now move the generator $a_{i,j}$ all the way to the end of the word conjugating every generator in between by $a_{i,j}$. We thus obtain a word whose last letter is $a_{i,j}$.
\end{proof}

Note that such a sequence can be represented by a square diagram $R$, whose labels at the bottom edge spell $a_{1,n}a_{1,n-1}\ldots,a_{1,2}$ and whose labels at the top edge spell the final word of the sequence whose last letter is $a_{i,j}$. The lines are strictly monotone increasing and the words spelled by the labels at any horizontal slice of the diagram spell the intermediate BKL-word in the sequence. 

Therefore, we can construct for every positive generator a square diagram $R'(a_{i,j})$, whose labels at the bottom and top edge both spell $a_{1,n}a_{1,n-1}\ldots$, $a_{1,2}$, whose curves do not intersect the right edge of the square ($\varphi=2\pi$), and that for some value of $t$ has the property that the label of the critical value with largest $\varphi$-coordinate is $a_{i,j}$. This diagram is obtained from $R$ by composing it with the inverse of $R$, i.e., a square diagram that reverses the sequence of BKL-words from Lemma~\ref{lem:BKLseq}. Note that since all labels are positive, the inverse of $R$ is still realized by strictly monotone increasing lines, see Figure~\ref{fig:Rlast}a). The reason why $R'(a_{i,j})$ is not a Rampichini diagram is that it is not $2\pi$-periodic with respect to the $t$-coordinate. The endpoints at the top edge are strictly to the right of the starting points at the bottom edge.

\begin{proof}[Proof of Theorem~\ref{thm:delta}]
Let $B=\delta P$. Note that $\delta$ is a P-fibered braid with a simple, pure Rampichini diagram as in Figure~\ref{fig:Rlast}b). In particular, the labels at $t=0$ read $a_{1,n},a_{1,n-1},\ldots$, $a_{1,2}$ from left to right. We now have to show that every band in $P$ can be obtained by inserting inner loops. Note that, since $\delta$ is a positive word, all curves in the Rampichini diagram are strictly monotone increasing.

\begin{figure}[h]
\labellist
\Large
\pinlabel a) at  30 820
\pinlabel b) at 960 820
\small
\pinlabel 0 at 90 110
\pinlabel 0 at 130 50
\pinlabel $2\pi$ at 790 50
\pinlabel $2\pi$ at 90 730
\pinlabel $(1,5)$ at 1160 780
\pinlabel $(1,4)$ at 1300 780
\pinlabel $(1,3)$ at 1450 780
\pinlabel $(1,2)$ at 1600 780
\pinlabel $(4,5)$ at 1800 210
\pinlabel $(3,4)$ at 1800 330
\pinlabel $(2,3)$ at 1800 480
\pinlabel $(1,2)$ at 1800 590
\pinlabel $(3,5)$ at 565 250
\pinlabel 0 at 1040 100
\pinlabel 0 at 1070 40
\pinlabel $2\pi$ at 1720 50
\pinlabel $2\pi$ at 1020 730
\Large
\pinlabel $t$ at 100 430
\pinlabel $\varphi$ at 500 20
\pinlabel $t$ at 1030 430
\pinlabel $\varphi$ at 1400 20
\endlabellist
\centering
\includegraphics[height=4cm]{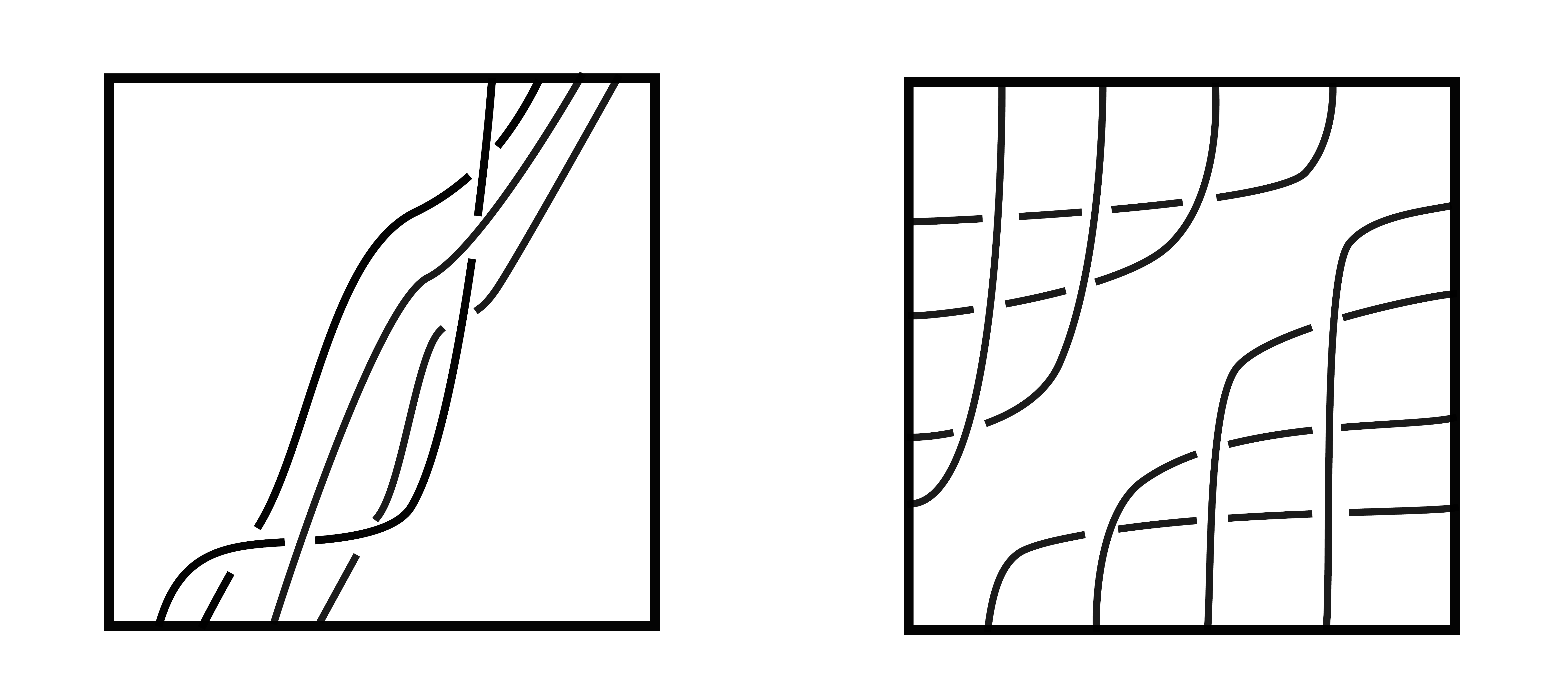}
\labellist
\Large 
\pinlabel c) at -100 1550
\pinlabel $t$ at 30 750
\pinlabel $\varphi$ at 850 20
\small
\pinlabel $(1,2)$ at 1800 1400
\pinlabel $(2,3)$ at 1800 1300
\pinlabel $(3,4)$ at 1800 1155
\pinlabel $(4,5)$ at 1800 1065
\pinlabel 0 at 40 90
\pinlabel 0 at 150 40
\pinlabel $2\pi$ at 40 1540
\pinlabel $2\pi$ at 1670 40
\pinlabel $(3,5)$ at 650 220
\pinlabel $(2,3)$ at 1160 570
\endlabellist
\includegraphics[height=5cm]{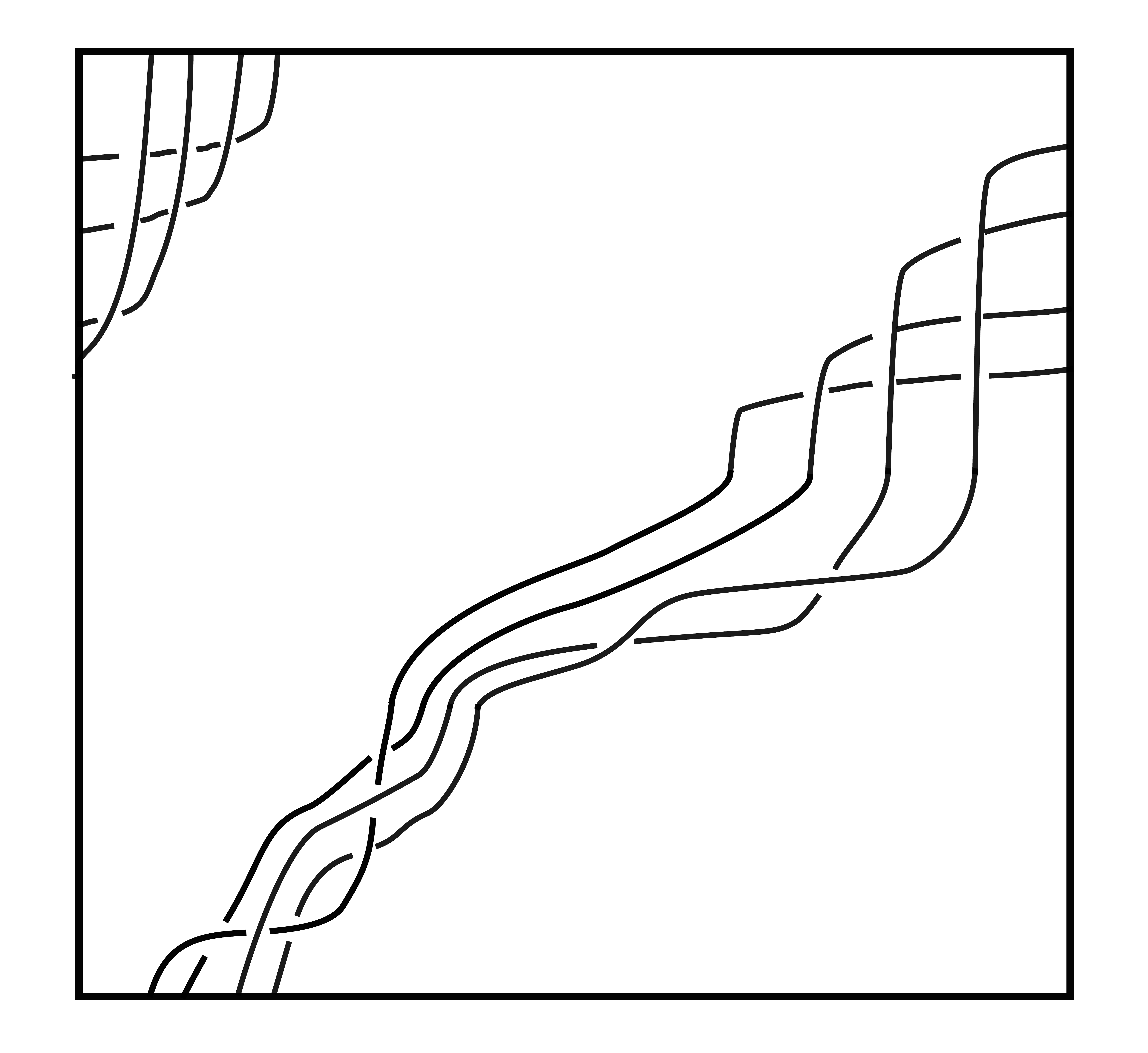}
\caption{a) $R'(a_{3,5})$ for $n=5$. The transpositions at the top and bottom edge both read $(1,5)$, $(1,4)$, $(1,3)$, $(1,2)$ from left to right. b) A Rampichini diagram for $\delta$ with $n=5$. c) A Rampichini diagram for $\delta$ built from $R'(a_{3,5})$, $R'(a_{2,3})$ and the diagram in b). Inserting inner loops where the labels $(3,5)$ and $(2,3)$ are results in a Rampichini diagram for $a_{3,5}a_{2,3}\delta$.\label{fig:Rlast}}
\end{figure}

Let $P=\underset{k=1}{\overset{M}{\prod}}a_{i_k,j_k}$. We glue the bottom edge of the square diagram $R'(a_{i_k,j_k})$ along the top edge of $R'(a_{i_{k-1},j_{k-1}})$ for all $k=2,3,\ldots,M$. This results in a square diagram whose labels at the bottom edge and at the top edge are $a_{1,n},a_{1,n-1},\ldots,a_{1,2}$. However the endpoints of the curves at the top edge of the square are strictly to the right of the endpoints of the curves at the bottom edge. We glue the top edge of this square diagram along the bottom edge of the Rampichini diagram of $\delta$. In order to be able to perform these concatenations of square diagrams, we have to shift the curves in the upper square diagram to the right, so that the endpoints of its curves on its bottom edge coincide with the endpoints of the lower square diagram on its top edge. Likewise we have to shift curves (in parallel) so that the endpoints at the top edge of the new diagram coincide with the endpoints on the bottom edge of $R'(a_{i_1,j_1})$. Note that none of these parallel shifts affect the crossing pattern or cactus at any value of $t$, nor do they change the strict monotonicity of the curves (cf. Figure~\ref{fig:Rlast}).

The resulting square diagram is a simple, pure Rampichini diagram of $\delta$ and there are $t_k$, $k=1,2,\ldots,M$, with $t_k>t_{k-1}$ such that $\tau_{n-1}(t_k)=a_{i_k,j_k}$. Thus inserting inner loops whose moving critical value corresponds to $\tau_{n-1}(t_k)$ results in a Rampichini diagram of $P\delta$.

It follows from Theorem~\ref{thm:main2} that the closure of $P\delta$, which is the closure of $B$, is real algebraic.
\end{proof}

\end{document}